\theoremstyle{plain}
\newtheorem{theor0}{Theorem}[section]
\newenvironment{theor}
  {\pushQED{\qed}\begin{theor0}}{\popQED\end{theor0}}
\newtheorem{lem0}[theor0]{Lemma}
\newtheorem{prop0}[theor0]{Proposition}
\newenvironment{prop}
  {\pushQED{\qed}\begin{prop0}}{\popQED\end{prop0}}
\newtheorem{cor0}[theor0]{Corollary}
\newtheorem{propr0}[theor0]{Property}
\newtheorem{hyp0}[theor0]{Hypothesis}
\newtheorem{result0}[theor0]{Result}
\newtheorem{conj0}[theor0]{Conjecture}
\newtheorem{heur0}[theor0]{Heuristics}
\theoremstyle{definition}
\newtheorem{defin0}[theor0]{Definition}
\newenvironment{defin}
  {\pushQED{\qed}\begin{defin0}}{\popQED\end{defin0}}
\newtheorem{rems0}[theor0]{Remarks}
\newtheorem{ex0}[theor0]{Example}
\newtheorem{exs0}[theor0]{Examples}
\newtheorem{rem0}[theor0]{Remark}
\newenvironment{rem}
  {\pushQED{\qed}\begin{rem0}}{\popQED\end{rem0}}
\newtheorem{qu0}[theor0]{Question}
\newtheorem{qus0}[theor0]{Questions}
  \newtheorem{as0}[theor0]{Assumption}
\newcommand{\N}{\mathbb N}
\newcommand{\e}{\varepsilon}
\newcommand{\R}{\mathbb R}
\newcommand{\Z}{\mathbb Z}
\newcommand{\B}{\mathcal B}
\newcommand{\X}{\mathcal X}
\newcommand{\Y}{\mathcal Y}
\newcommand{\Cc}{\mathcal C}
\newcommand{\Sp}{\mathbb S}
\newcommand{\Pc}{\mathcal P}
\newcommand{\Dc}{\mathcal D}
\newcommand{\F}{\mathcal F}
\newcommand{\M}{\mathcal M}
\newcommand{\A}{\mathcal A}
\newcommand{\p}{\mathbb{P}}
\newcommand{\E}{\mathbb{E}}
\newcommand{\Var}{\operatorname{Var}}
\newcommand{\Ent}{\operatorname{Ent}}
\newcommand{\osc}{\partial^{\mathrm{osc}}}
\newcommand{\oscd}[1]{\mathrel{\osc_{#1}}}
\newcommand{\supess}{\operatorname{sup\,ess}}
\newcommand{\supessd}[1]{\mathrel{\mathop{\supess}\limits_{#1}}}
\newcommand{\infessd}[1]{\mathrel{\mathop{\infess}\limits_{#1}}}
\newcommand{\infess}{\operatorname{inf\,ess}}
\newcommand{\Mes}{\operatorname{Mes}}
\newcommand{\loc}{{\operatorname{loc}}}
\newcommand{\Ld}{\operatorname{L}}
\newcommand{\supp}{\operatorname{supp}}
\newcommand{\diam}{\operatorname{diam}}
\newcommand{\var}[1]{\mathrm{Var}\left[#1\right]}
\newcommand{\varR}[1]{\mathrm{Var}_R\left[#1\right]}
\newcommand{\expecR}[1]{\mathbb{E}_R\left[ #1 \right]}
\newcommand{\ent}[1]{\mathrm{Ent}\!\left[#1\right]}
\newcommand{\expecM}[1]{\mathbb{E}\bigg[ #1 \bigg]}
\newcommand{\cov}[2]{\mathrm{Cov}\left[#1;#2\right]}
\newcommand{\expec}[1]{\mathbb{E}\left[ #1 \right]}
\newcommand{\expecp}[1]{\mathbb{E}'\left[ #1 \right]}
\newcommand{\pr}[1]{\mathbb{P}\left[ #1 \right]}
\newcommand{\prm}[1]{\mathbb{P}\big[ #1 \big]}
\newcommand{\expeC}[2]{\mathbb{E}\left[\left. #1 \,\right\|\,#2\right]}
\newcommand{\expeCm}[2]{\mathbb{E}\big[ #1 \,\big\|\,#2\big]}
\newcommand{\expeCM}[2]{\mathbb{E}\bigg[ #1 \,\bigg\|\,#2\bigg]}
\newcommand{\expeCpm}[2]{\mathbb{E}'\big[ #1 \,\big\|\,#2\big]}
\newcommand{\prC}[2]{\mathbb{P}\left[\left. #1 \,\right\|\,#2\right]}
\newcommand{\parfct}[1]{\partial_{#1}^{\operatorname{fct}}}
\newcommand{\parG}[1]{\partial_{#1}^{\operatorname{G}}}
\newcommand{\parsup}[1]{\partial_{#1}^{\operatorname{\sup}}}
\newcommand{\step}[1]{\noindent \textit{Step} #1.}
\numberwithin{equation}{section}
\newcolumntype{M}[1]{>{\centering\arraybackslash}m{#1}}
\title[Multiscale functional inequalities: Constructive approach]{Multiscale functional inequalities in probability:\\ Constructive approach}
\author[M. Duerinckx]{Mitia Duerinckx}
\author[A. Gloria]{Antoine Gloria}
\address[Mitia Duerinckx]{Laboratoire de Mathématique d'Orsay, UMR 8628, Université Paris-Sud, F-91405 Orsay, France \& Universit\'e Libre de Bruxelles, Département de Mathématique, Brussels, Belgium}
\email{mduerinc@ulb.ac.be}
\address[Antoine Gloria]{Sorbonne Universit\'e, CNRS, Universit\'e de Paris, Laboratoire Jacques-Louis Lions (LJLL), F-75005 Paris, France \& Universit\'e Libre de Bruxelles, Département de Mathématique, Brussels, Belgium}
\email{gloria@ljll.math.upmc.fr}
\begin{document}
\maketitle

\begin{abstract}
Consider an ergodic stationary random field $A$ on the ambient space $\R^d$.
In order to establish concentration properties for nonlinear functions $Z(A)$, it is standard 
to appeal to functional inequalities like Poincaré or logarithmic Sobolev inequalities in the probability space.
These inequalities are however only known to hold for a restricted class of laws (product measures, Gaussian measures with integrable covariance, or more general Gibbs measures with nicely behaved Hamiltonians).
In this contribution, we introduce variants of these inequalities, which we refer to as \emph{multiscale functional inequalities} and which still imply fine concentration properties,
and we develop a constructive approach to such inequalities.
We consider random fields that can be viewed as transformations of a product structure, for which the question is reduced to devising approximate chain rules for nonlinear random changes of variables.
This approach allows us to cover most examples of random fields arising in the modelling of heterogeneous materials in the applied sciences, including Gaussian fields with arbitrary covariance function, Poisson random inclusions with (unbounded)  random  radii, random parking and Mat\'ern-type processes, as well as Poisson random tessellations. The obtained multiscale functional inequalities, which we primarily develop here in view of their application to concentration and to quantitative stochastic homogenization, are of independent interest.
\end{abstract}

\section{Introduction}

This contribution focuses on functional inequalities in the probability space and constitutes the first and main part of a series of three articles (together with~\cite{DG1,DG3}) where we introduce  \emph{multiscale functional inequalities}, which are multiscale weighted versions of standard functional inequalities (Poincaré, covariance, and logarithmic Sobolev inequalities).
One of the main achievements of the present contribution is the proof that
most examples of random fields arising in the modelling of heterogeneous materials in the applied sciences, including
some important examples from stochastic geometry (the random parking process and Poisson random tessellations), do satisfy such multiscale functional inequalities whereas they do not satisfy their standard versions.
As shown in the companion article~\cite{DG1}, these weaker inequalities still imply fine concentration properties
and they can be used as convenient quantitative mixing assumptions in stochastic homogenization, which was our original motivation for this work (see Section~\ref{sec:homog} below for details).

\subsection{Multiscale functional inequalities}

Let $a=(a_x)_{x\in \Z^d}$ be a family of random variables on a probability space $(\Omega,\A,\p)$ and consider a $\sigma(a)$-measurable random variable~$Z(a)$.
If $a$ is a stationary Gaussian field on $\Z^d$ with integrable covariance function, the variance of $Z(a)$ is known to be controlled via the Poincaré inequality
\begin{equation}\label{0-int-0}
\var{Z(a)} \,\le \,C\,\E\bigg[{\sum_{x\in \Z^d} |\parfct{a_x} Z(a)|^2}\bigg],
\end{equation}
where $C>0$ only depends on the covariance function of $a$ and where $\parfct{a_x} Z(a)$ stands for the partial derivative  $\frac{\partial Z}{\partial a_x}(a)$ 
of $Z$ wrt the variable $a_x$.
Likewise, if $(a_x)_{x\in \Z^d}$ are independent and identically distributed (i.i.d.) random variables (non-necessarily Gaussian),  the variance of $Z(a)$ is controlled via the Poincaré inequality
\begin{equation}\label{0-int-1}
\var{Z(a)} \,\le \,C\,\E\bigg[{ \sum_{x\in \Z^d} |\osc_{a_x} Z(a)|^2}\bigg],
\end{equation}
where $\osc_{a_x} Z(a)$ now stands for the oscillation $\sup_{a_x} Z(a)-\inf_{a_x} Z(a)$ of $Z$ wrt the variable~$a_x$.
Functional inequalities like~\eqref{0-int-0} or~\eqref{0-int-1} are known to imply fine concentration properties for random variables $Z(a)$ and have been extensively used in mathematical physics, for instance in the context of phase transitions for Ising models,
and more recently in the context of stochastic homogenization, cf.~e.g.~\cite{Naddaf-Spencer-98,Glotto-11,Glotto-12,GNO-I,Glotto-Neukamm-14,Marahrens-Otto-15,DGO1}.

\medskip

In the context of partial differential equations (PDEs) with random coefficients, we consider random coefficient fields that are defined on $\R^d$ rather than on $\Z^d$.
In this continuum setting, let $A:\R^d\times\Omega\to\R$ be a jointly measurable random field on $\R^d$ (we use a capital letter to emphasize the difference with the discrete case), constructed on a probability space $(\Omega,\A,\p)$.
The Poincaré inequality~\eqref{0-int-0} is then naturally replaced by 
\begin{equation}\label{0-int-3}
\var{Z(A)} \,\le \,C\, \E\bigg[{\int_{\R^d} |\parfct{A,B(x)} Z(A)|^2dx}\bigg],
\end{equation}
where $B(x)$ denotes the unit ball centered at $x\in \R^d$ and where the ``functional derivative'' $\parfct{A,B(x)} Z(A)$ now stands for $\int_{B(x)}|\frac{\partial Z(A)}{\partial A}|$ with $\frac{\partial Z(A)}{\partial A}$ denoting the G\^ateaux (Malliavin type) derivative.
Likewise, the Poincaré inequality~\eqref{0-int-1} is replaced by 
\begin{equation}\label{0-int-4}
\var{Z(A)} \,\le \,C\,\E\bigg[{\int_{\R^d}|\osc_{A,B(x)} Z(A)|^2dx}\bigg],
\end{equation}
where $\osc_{A,B(x)} Z(A)$ now denotes the oscillation of $Z(A)$ wrt variations of $A$ on $B(x)$, that is, formally,
$$\osc_{A,B(x)} Z(A)\,=\,\sup_{A'\,:\, A'|_{\R^d\setminus B(x)}=A|_{\R^d\setminus B(x)}} Z(A')-\inf_{A'\,:\, A'|_{\R^d\setminus B(x)}=A|_{\R^d\setminus B(x)}} Z(A').$$
Such standard functional inequalities~\eqref{0-int-3}--\eqref{0-int-4} are however very restrictive: the random field essentially either has to be Gaussian with integrable covariance (in which case~\eqref{0-int-3} holds) or has to display a product structure (e.g.\@ Poisson point process, in which case~\eqref{0-int-4} holds). 
This rules out most models of interest for heterogeneous materials considered in the applied sciences~\cite{Torquato-02}
and is the starting point for the present series of articles on functional inequalities, which aims at closing this gap.

\medskip

To this aim, we introduce multiscale functional inequalities (MFIs), which are multiscale weighted generalizations of standard functional inequalities (Poincaré, covariance, and logarithmic Sobolev inequalities).
More precisely, given an integrable weight~$\pi:\R_+\to \R_+$, the multiscale versions of~\eqref{0-int-3} and~\eqref{0-int-4} take the form 
\begin{eqnarray} 
\var{Z(A)} &\le & \expec{\int_0^\infty \int_{\R^d} |\parfct{A,B_\ell(x)} Z(A)|^2dx\, (\ell+1)^{-d}\pi(\ell)\,d\ell},\label{0-int-5}
\\
\var{Z(A)} &\le & \expec{ \int_0^\infty \int_{\R^d}|\osc_{A,B_\ell(x)} Z(A)|^2dx\, (\ell+1)^{-d}\pi(\ell)\,d\ell},\label{0-int-6}
\end{eqnarray}
where $B_\ell(x)$ is the ball of radius $\ell$ centered at $x\in \R^d$.
In a nutshell, MFIs are to standard functional inequalities what $\alpha$-mixing conditions are to ensembles with finite range of dependence:
MFIs take into account variations of $A$ on arbitrarily large sets ($\ell\gg 1$) but 
with a decaying weight, which gives them the flexibility to include strongly correlated random fields.
{\color{black}We refer to the companion article~\cite{DG1} for a thorough discussion of the link between the decay of the weight and mixing properties.
Note that a power $(\ell+1)^{-d}$ is singled out from the weight $\pi$ in the notation~\eqref{0-int-5}--\eqref{0-int-6} in order to compensate for the typical size of variations on balls of radius $\ell$: with this choice, the ergodicity of the random field $A$ is precisely guaranteed by the integrability of~$\pi$ (cf.~\cite[Proposition~1.4]{DG1}).}
The aim of the present contribution is to show that important examples of correlated random fields do indeed
satisfy MFIs~\eqref{0-int-5}--\eqref{0-int-6} whereas they do not satisfy their standard versions~\eqref{0-int-3}--\eqref{0-int-4}.
Our approach covers all the models considered in the reference textbook~\cite{Torquato-02} on heterogeneous materials modelling.

\subsection{Constructive approach to multiscale functional inequalities}

Random coefficient fields $A$ considered in~\cite{Torquato-02} for heterogeneous materials modelling have the property of being the form $A=\Phi(A_0)$,
where $A_0$ is a simpler random field with a product structure and where $\Phi$ is a (potentially
complicated) nonlinear
nonlocal transformation.
In particular, standard functional inequalities~\eqref{0-int-3}--\eqref{0-int-4} hold for $\sigma(A_0)$-measurable random variables $Z_0(A_0)$.
The main question we answer in this contribution is under what assumptions on the transformation $\Phi$ and for which weight $\pi$
standard functional inequalities~\eqref{0-int-3}--\eqref{0-int-4} for $A_0$ can be deformed into multiscale functional inequalities \eqref{0-int-5}--\eqref{0-int-6} for $A$.
In view of the relation $Z(A)=Z_0(A_0)$ with $Z_0=Z \circ\Phi$, this amounts to devising an (approximate) chain rule in terms of properties of the transformation $\Phi$, which can quickly become a subtle problem (cf.~e.g.\@ the case of the random parking process below). The weight $\pi$ arises in link with the lack of locality of $\Phi$.

\medskip

Let us give three examples of random coefficient fields $A$ 
that do not satisfy standard functional inequalities but for which we establish multiscale functional inequalities:
\begin{itemize}
\item \emph{Gaussian fields.} Let $A(x):=b(A_1(x))$, where $b$ is a bounded Lipschitz function and $A_1$ is a stationary Gaussian field with covariance function
$\mathcal C:\R^d \to \R$ such that $|\mathcal C(x)|\le c(|x|)$ with $c:\R_+\to \R_+$ differentiable and decreasing. Then the field $A$ satisfies~\eqref{0-int-5}
with weight $\pi(\ell)=-C c'(\ell)$ for some constant $C$ depending only on~$b$.
This constitutes an alternative to Poincar\'e inequalities in terms of Malliavin calculus, cf.~e.g.~\cite{Houdre-PerezAbreu-95,Nourdin-Peccati-12}.
\smallskip\item \emph{Voronoi tessellation of a Poisson point set.} Let $A(x):=\sum_jV_j \mathds1_{C_j}(x)$, where $V_j$ are i.i.d.\@ random variables and where $C_j$ are the cells
of the Voronoi tessellation of $\R^d$ associated with the realization of a Poisson point process of given intensity. Then the field $A$ satisfies  \eqref{0-int-6}
with $\pi(\ell)=C \exp(-\frac1C \ell^d)$ for some constant $C$ depending only on the law of $V$ and on the intensity of the underlying Poisson process.
\smallskip\item \emph{Random parking measure.} Let $A(x):=\alpha+(\beta-\alpha)\sum_j \mathds1_{B_j}(x)$, where $\alpha$ and $\beta$ are deterministic values and where $B_j$ are unit balls centered at the points of a random parking process (formally defined as the thermodynamic limit of a packing process of unit balls at saturation~\cite{Penrose-01}).
Then the field $A$ satisfies  \eqref{0-int-6}
with $\pi(\ell)= C\exp(-\frac1C \ell)$ for some universal constant $C$.
\end{itemize}
As shown in the companion article~\cite{DG1}, the validity of such functional inequalities entails in particular that these three examples of random fields enjoy strong concentration properties (with tail behavior ranging from stretched exponential to Gaussian), although they do not satisfy any standard functional inequality.
Let us briefly indicate how these examples can be viewed as transformations of simpler structures.
First, in the Gaussian example, we write $A=b(\Phi(A_0))$ where~$A_0$ is a Gaussian white noise and where $\Phi$ is some nonlocal linear transformation given as the convolution with a suitable kernel determined by the target covariance~$\Cc$ --- in this case the chain rule is  elementary.
Second, in the example of the random tessellation, we write $A=\Phi(A_0)$ where $A_0$ has a product structure (Poisson point process decorated with the i.i.d.\@ random variables $V_j$'s) and where $\Phi$ is a suitable nonlocal map. Note that the nonlocality of $\Phi$ here depends itself on the realization of the Poisson point process: Voronoi cells are indeed not uniformly bounded and the weight $\pi$ in the multiscale functional inequality~\eqref{0-int-6} is precisely related to the decay of the probability of Voronoi cells with large diameter.
Third, in the example of the random parking measure, we write $A=\Phi(A_0)$ where $A_0$ is a Poisson point process on the extended space $\R^d\times\R_+$ and where $\Phi$ is the nonlinear nonlocal map given by Penrose's graphical construction~\cite{Penrose-01}. The nonlocality of $\Phi$ then depends on the realization of the Poisson point process in a particularly intricate way: the weight in the multiscale functional inequality~\eqref{0-int-6} is related to the so-called stabilization radius introduced by Penrose and Yukich~\cite{Penrose-Yukich-02}.
For the last two examples, we introduce a new general geometric notion of action radius (inspired by~\cite{Penrose-Yukich-02}),
which suitably controls the nonlocality of the transformation $\Phi$ and is the key to establishing (random) approximate chain rules that lead to multiscale functional inequalities~\eqref{0-int-5}--\eqref{0-int-6}.

\medskip

The rest of this article is organized as follows. 
In the rest of this introduction we make precise how multiscale functional inequalities can be used in stochastic homogenization.
In Section~\ref{sec:constructive}, we establish various constructive criteria for multiscale functional inequalities, based on
approximate chain rules in standard functional inequalities.
In Section~\ref{sec:examples}, we apply this constructive approach to all the examples of coefficient fields of the reference textbook~\cite{Torquato-02}, thus addressing
in particular the three examples presented above.

\subsection{Application to stochastic homogenization}\label{sec:homog}

For all $f\in C^\infty_c(\R^d)^d$, we consider the following linear elliptic equation in divergence form,
\begin{equation}\label{0-int-2}
-\nabla \cdot A\nabla u \,=\,\nabla\cdot f\qquad \text{ in $\R^d$},
\end{equation}
with random heterogeneous coefficients $A$.
Stochastic homogenization allows to replace this equation on large scales by an effective equation with deterministic constant coefficients, which constitutes a powerful tool to study composite materials in applied physics and mechanics. 
Developing a quantitative theory of stochastic homogenization (that provides error estimates and characterizes fluctuations) is of utmost importance in those fields.
We are interested in the following three main types of results:
\begin{itemize}
\item[(I)] large-scale regularity properties for the (random) solution $\nabla u$;
\item[(II)] quantitative estimates for the homogenization error;
\item[(III)] characterization of the large-scale fluctuations of $\nabla u$.
\end{itemize}

\medskip

There are two classical settings in which quantitative homogenization results are established: either standard functional inequalities in the probability space (or their multiscale versions introduced here) or standard mixing conditions (e.g.\@ $\alpha$-mixing).
Arguments are typically very different in these two settings.
On the one hand, functional inequalities imply a powerful calculus in the probability space, which is particularly convenient to unravel probabilistic cancellations and substantially simplifies the proofs. Optimal scalings can then easily be captured, but stochastic integrability often remains suboptimal (except for~(I)). We refer to the series of works~\cite{GNO-reg,GNO-quant,Fischer-Otto-16,DGO1,DGO2} by Fischer, Neukamm, Otto, and the authors.
On the other hand, standard mixing conditions require a more involved analysis as they only allow to unravel local cancellations after iteration (cf.~e.g.~the renormalization procedure in~\cite{AKM1} and the notion of approximate locality in~\cite{GO4}). Importantly, such iterations lead to (nearly) optimal stochastic integrability --- in contrast with functional inequalities, which cannot be iterated nicely. A full characterization of fluctuations is however still missing in this setting. We refer to the series of works~\cite{Armstrong-Smart-16,Armstrong-Mourrat-16,AKM1,AKM2,AKM-book} by Armstrong, Kuusi, Mourrat, and Smart, and to~\cite{GO4} by Otto and the second author.
Since some random coefficient fields satisfy only one of those two sets of assumptions, it is important to consider both separately.

\medskip

As shown in this contribution, all the examples of random fields appearing in the reference textbook~\cite{Torquato-02} for heterogeneous materials modelling satisfy multiscale functional inequalities. Since some of them also satisfy $\alpha$-mixing conditions (cf.~\cite[Proposition~1.4]{DG1}), we can compare the outcome of the two corresponding approaches: as explained in~\cite[Section~1.3]{DG1} (see also~\cite[Corollary~8]{GNO-reg}), functional inequalities typically capture finer concentration properties, hence finer stochastic integrability.

\tableofcontents

\addtocontents{toc}{\protect\setcounter{tocdepth}{1}}
\subsection*{Notation}
\begin{itemize}
\item $d$ is the dimension of the ambient space $\R^d$;
\item $C$ denotes various positive constants that only depend on the dimension $d$ and possibly on other controlled quantities; we write $\lesssim$ and $\gtrsim$ for $\le$ and $\ge$ up to such multiplicative constants $C$; we use the notation $\simeq$ if both relations $\lesssim$ and $\gtrsim$ hold; we add a subscript in order to indicate the dependence of the multiplicative constants on other parameters;
\item $Q^k:=[-\frac12,\frac12)^k$ denotes the unit cube centered at $0$ in dimension $k$,
and for all  $x\in\R^k$ and $r>0$ we set $Q^k(x):=x+Q^k$, $Q_r^k:=rQ^k$ and $Q_r^k(x):=x+rQ^k$; when $k=d$ or when there is no confusion possible on the meant dimension, we drop the superscript $k$;
\item we use similar notation for balls, replacing $Q^k$ by $B^k$ (the unit ball in dimension~$k$);
\item the Euclidean distance between  subsets of $\R^d$ is denoted by $d(\cdot,\cdot)$;
\item $\B(\R^k)$ denotes the Borel $\sigma$-algebra on $\R^k$;
\item $\expec{\cdot}$ denotes the expectation, $\var{\cdot}$ the variance, and $\cov{\cdot}{\cdot}$ the covariance in the underlying probability space $(\Omega,\A,\p)$,
and the notation $\expec{\cdot \| \cdot}$ stands for the conditional expectation;
\item for a subset $D$ of a reference set $E$, we let $D^c:=E\setminus D$ denote its complement;
\item for all $a,b\in \R$, we set $a\wedge b:=\min\{a,b\}$, $a\vee b:=\max\{a,b\}$, and $a_+:=a\vee0$;
\item for all matrices $F$, we denote by $F^t$ its transpose matrix;
\item $\lceil a \rceil$ denotes the smallest integer larger or equal to $a$.
\end{itemize}


\addtocontents{toc}{\protect\setcounter{tocdepth}{2}}
\section{Constructive approach to multiscale functional inequalities}\label{sec:constructive}

In this section we consider random fields that can be constructed as transformations of product structures. Under suitable assumptions we describe how the standard Poincaré, covariance, and logarithmic Sobolev inequalities satisfied by the ``hidden'' product structures are deformed into multiscale functional inequalities for the random fields of interest. Various general criteria are established, while the analysis of the examples mentioned in the introduction is postponed to Section~\ref{sec:examples}.

\subsection{Multiscale functional inequalities}\label{chap:spectralgaps}

We start with a precise definition of multiscale functional inequalities.
Let $A:\R^d\times\Omega\to\R$ be a jointly measurable random field on $\R^d$, constructed on some probability space $(\Omega,\A,\p)$. A Poincaré inequality in probability for~$A$ is a functional inequality that allows to control the variance of any $\sigma(A)$-measurable random variable $Z(A)$ in terms of its local dependence on $A$, that is, in terms of some ``derivative'' of $Z(A)$ wrt local restrictions of $A$. 
In the present continuum setting, we consider three possible notions of derivatives.
\begin{itemize}
\item The {\it oscillation} $\osc$ is formally defined by
\begin{multline}
\hspace{1cm}\oscd{A,S}Z(A)~:=~\supessd{A,S}Z(A)-\infessd{A,S}Z(A)\\
\text{``$=$''}~ \supess\left\{Z(A'):A'\in\Mes(\R^d;\R),\,A'|_{\R^d\setminus S}=A|_{\R^d\setminus S}\right\} \\
-\infess\left\{Z(A'):A'\in\Mes(\R^d;\R),\,A'|_{\R^d\setminus S}=A|_{\R^d\setminus S}\right\},\label{e.def-Glauber-formal}
\end{multline}
where the essential supremum and infimum are taken wrt the measure induced by the field $A$ on the space $\Mes(\R^d;\R)$ (endowed with the cylindrical $\sigma$-algebra). This definition \eqref{e.def-Glauber-formal} of $\osc_{A,S}Z(A)$ is not measurable in general, and we rather define
\[\oscd{A,S}Z(A):=\M[Z\|A|_{\R^d\setminus S}]+\M[-Z\|A|_{\R^d\setminus S}]\]
in terms of the conditional essential supremum $\M[\cdot\|A_{\R^d\setminus S}]$ given $\sigma(A|_{\R^d\setminus S})$, as introduced in~\cite{Barron}.
Alternatively, we may simply define $\osc_{A,S}Z(A)$ as the measurable envelope of~\eqref{e.def-Glauber-formal}. These measurable choices are equivalent for the application to stochastic homogenization, and one should  not worry about these measurability issues.
\smallskip\item The (integrated) {\it functional} (or Malliavin type) {\it derivative} $\parfct{}$ is the closest generalization of the usual partial derivatives commonly used in the discrete setting.
Choose an open set $M\subset \Ld^\infty(\R^d)$ containing the realizations of the random field~$A$.
Given a $\sigma(A)$-measurable random variable $Z(A)$ and given an extension $\tilde Z:M\to\R$ of~$Z$, its G\^ateaux derivative $\frac{\partial\tilde Z(A)}{\partial A}\in\Ld^1_\loc(\R^d)$ is defined as follows, for all compactly supported perturbations $B\in\Ld^\infty(\R^d)$,
\[\hspace{1.2cm}\lim_{t\to0}\frac{\tilde Z(A+tB)-\tilde Z(A)}t=\int_{\R^d}B(x)\frac{\partial\tilde Z(A)}{\partial A}(x)\,dx,\]
if the limit exists.
{\color{black}(The extension $\tilde Z$ is only needed to make sure that quantities like $\tilde Z(A+tB)$ make sense for small $t$, while $Z$ is a priori only defined on realizations of~$A$. In the sequel we will always assume that such an extension is implicitly given; this is typically the case in applications in stochastic homogenization.)}
Since we are interested in the local averages of this derivative, we rather define for all bounded Borel subsets $S\subset\R^d$,
\begin{align*}
\hspace{1.2cm}\parfct{A,S}Z(A)=\int_S\Big|\frac{\partial\tilde Z(A)}{\partial A}(x)\Big|dx,
\end{align*}
which is alternatively characterized by
\begin{align*}
\hspace{1.2cm}\parfct{A,S} Z(A)
\,=\,\sup\Big\{ \limsup_{t\downarrow 0} \frac{\tilde Z(A+t B)-\tilde Z(A)}{t} \,:\, \supp{B}\subset S,\,\sup |B|\le 1\Big\}.
\end{align*}
This derivative is additive wrt the set $S$: for all disjoint Borel subsets $S_1,S_2\subset\R^d$ we have $\partial_{A,S_1\cup S_2}^{\operatorname{fct}}Z(A)=\partial_{A,S_1}^{\operatorname{fct}}Z(A)+\partial_{A,S_2}^{\operatorname{fct}}Z(A)$.
{\color{black}\smallskip\item The supremum of the functional derivative is defined as
\begin{align*}
\hspace{1.2cm}\partial_{A,S}^{\operatorname{\sup}}Z(A)~:=~\supessd{A,S}\int_S\Big|\frac{\partial\tilde Z(A)}{\partial A}\Big|.
\end{align*}
Note that there holds $\osc,\parfct{}\lesssim \parsup{}$ provided that $A$ is uniformly bounded.
From the proofs in the companion article~\cite{DG1}, it is clear that multiscale functional inequalities with $\parsup{}$ imply the same concentration properties as the corresponding functional inequalities with $\osc$.}
\end{itemize}

\medskip
Henceforth we use $\tilde\partial$ to denote either $\osc$, $\parfct{}$, or $\parsup{}$. 
We are now in position to define multiscale functional inequalities, which are multiscale weighted versions of standard functional inequalities in the probability space.
\begin{defin}\label{defi:TFI}
Given an integrable function $\pi:\R_+\to\R_+$, we say that $A$ satisfies the {\it multiscale Poincaré inequality} (or spectral gap) ($\tilde \partial$-MSG) {\it with weight $\pi$} if for all $\sigma(A)$-measurable random variables $Z(A)$ we have
\begin{align*}
\var{Z(A)}\,\le \,\expec{\int_0^\infty \int_{\R^d}\Big(\tilde\partial_{A,B_{\ell+1}(x)}Z(A)\Big)^{2}dx\,(\ell+1)^{-d}\pi(\ell)\,d\ell};
\end{align*}
it satisfies the  {\it multiscale covariance inequality \emph{($\tilde\partial$-MCI)} with weight $\pi$} 
if for all $\sigma(A)$-measurable random variables $Y(A),Z(A)$ we have
\begin{multline*}
\cov{Y(A)}{Z(A)}\\
\,\le\, \int_0^\infty \int_{\R^d}\expec{\Big(\tilde\partial_{A,B_{\ell+1}(x)}Y(A)\Big)^2}^{\frac12}\expec{\Big(\tilde\partial_{A,B_{\ell+1}(x)}Z(A)\Big)^2}^{\frac12}dx \,(\ell+1)^{-d}\pi(\ell)\,d\ell;
\end{multline*}
it satisfies the {\it multiscale logarithmic Sobolev inequality \emph{($\tilde\partial$-MLSI)} with weight $\pi$} if for all $\sigma(A)$-measurable random variables $Z(A)$ we have
\begin{align*}
\ent{Z(A)^2}&~:=~\expec{Z(A)^2\log\frac{Z(A)^2}{\expec{Z(A)^2}}}\\
&~\le~ \expec{\int_0^\infty \int_{\R^d}\Big(\tilde\partial_{A,B_{\ell+1}(x)}Z(A)\Big)^{2}dx\,(\ell+1)^{-d}\pi(\ell)\,d\ell}.\qedhere
\end{align*}
\end{defin}

\begin{rem}\label{rem:proofplus}
In each of the examples considered in the sequel, if the functional inequalities ($\tilde\partial$-MSG), ($\tilde\partial$-MCI), or ($\tilde\partial$-MLSI) are proved to hold with some weight $\pi$, then
for all $L\ge1$ the rescaled field $A_L:=A(L\cdot)$ satisfies the same functional inequality with the same weight~$\pi$.
See Remarks~\ref{rem:rescale-1} and~\ref{rem:rescale-2} for detail. This property is used in \cite{GNO-reg}.
\end{rem}

Corresponding standard functional inequalities (standard Poincaré (SG), covariance (CI), and logarith\-mic Sobo\-lev inequality (LSI)) are recovered by taking a compactly supported weight $\pi$, or equivalently, by skipping the integrals over $\ell$ and setting $\ell:=R$ a fixed radius.
Classical arguments yield the following sufficient criterion for standard functional inequalities. A standard proof is included for completeness in Appendix~\ref{sec:appendA} and will be referred to at several places in this contribution.
\begin{prop}\label{prop:criMSG}
Let $A_0$ be a random field on $\R^d$ with values in some measurable space such that restrictions $A_0|_{S}$ and $A_0|_T$ are independent for all disjoint Borel subsets $S,T\subset\R^d$. Let $A$ be a random field on $\R^d$ that is an $R$-local transformation of $A_0$, in the sense that for all $S\subset\R^d$ the restriction $A|_{S}$ is $\sigma(A_0|_{S+B_{R}})$-measurable. Then the field $A$ satisfies  \emph{($\osc$-CI)} and \emph{($\osc$-LSI)} with radius $R+\e$ for all $\e>0$.
\end{prop}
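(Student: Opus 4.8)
The plan is to exploit the independence structure of the hidden field $A_0$ through a tensorization (martingale) argument, and then to transfer oscillations with respect to $A_0$ into oscillations with respect to $A$ via the $R$-locality. Fix $\e>0$ and partition $\R^d$ into a countable family of disjoint cubes $(Q_n)_{n\in\N}$, each of diameter at most $\e$ and of common volume $v:=|Q_n|>0$. Since the restrictions of $A_0$ to disjoint Borel sets are independent, the family $(A_0|_{Q_n})_n$ forms a family of mutually independent coordinates, and I would introduce the filtration $\F_n:=\sigma(A_0|_{Q_1},\dots,A_0|_{Q_n})$ together with the $\sigma$-algebras $\G_n:=\sigma\big((A_0|_{Q_m})_{m\neq n}\big)$ that condition on all coordinates but the $n$-th. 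As $A$ is a function of $A_0$, every $\sigma(A)$-measurable variable $Y(A),Z(A)$ is $\sigma(A_0)$-measurable, so these product-measure tools apply to them viewed as functions of $A_0$.

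For the covariance inequality I would use the orthogonal martingale decomposition $\cov{Y(A)}{Z(A)}=\sum_n\expec{\Delta_nY\,\Delta_nZ}$, with increments $\Delta_nY:=\expeC{Y}{\F_n}-\expeC{Y}{\F_{n-1}}$, then Cauchy--Schwarz, and finally the term-by-term bound $\expec{(\Delta_nY)^2}\le\expec{\varC{Y}{\G_n}}$, which follows from the identity $\Delta_nY=\expeC{Y-\expeC{Y}{\G_n}}{\F_n}$ and the $L^2$-contractivity of conditional expectation. For the logarithmic Sobolev inequality the analogous input is the subadditivity of entropy for product measures, $\ent{Z(A)^2}\le\sum_n\expec{\entC{Z(A)^2}{\G_n}}$. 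Both are classical consequences of the independence of the $A_0|_{Q_n}$'s.

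I would then invoke two per-coordinate estimates: the bounded-range variance inequality $\varC{Z}{\G_n}\le\frac14\big(\osc_{A_0,Q_n}Z\big)^2$ (Popoviciu), and its entropic counterpart $\entC{Z^2}{\G_n}\le C\big(\osc_{A_0,Q_n}Z\big)^2$ for a universal constant $C$. The latter is the one genuinely analytic lemma and the step I expect to require the most care, since it is a bounded-oscillation logarithmic Sobolev estimate on a single factor that must hold uniformly over the (possibly unbounded) range of $Z$. The crucial structural step is then to replace each $A_0$-oscillation over a cube by an $A$-oscillation over a ball: by $R$-locality, $A|_{N_R(Q_n)^c}$ is measurable with respect to $A_0$ restricted to $N_R(Q_n)^c+B_R$, a set disjoint from $Q_n$ (here $N_R(Q_n):=Q_n+\bar B_R$), so varying $A_0$ only on $Q_n$ leaves $A$ unchanged outside $N_R(Q_n)$; hence $\osc_{A_0,Q_n}Z\le\osc_{A,N_R(Q_n)}Z$. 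Since $\diam Q_n\le\e$, one has $N_R(Q_n)\subseteq B_{R+\e+1}(x)$ for every $x\in Q_n$, and monotonicity of the oscillation in its set gives $\osc_{A_0,Q_n}Z\le\osc_{A,B_{R+\e+1}(x)}Z$ for all $x\in Q_n$.

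It remains to pass from the sum over cubes to an integral over $\R^d$. Since the last bound holds for every $x\in Q_n$, I would average it over $x\in Q_n$, dividing by $v$, and sum over $n$; this turns $\sum_n$ into $v^{-1}\int_{\R^d}dx$ and produces exactly the integrands of $(\osc\text{-CI})$ and $(\osc\text{-LSI})$ with balls $B_{R+\e+1}(x)$, that is, at radius $R+\e$, the $\e$-dependent factor $v^{-1}$ being absorbed into the multiplicative constant. The main difficulty is less any single estimate than the coordination of three scales --- the tensorization cubes, the locality radius $R$, and the target radius $R+\e$; the slack $\e$ is precisely what makes both the cube-to-ball inclusion and the disjointness $Q_n\cap(N_R(Q_n)^c+B_R)=\emptyset$ underlying the locality transfer hold with room to spare, which is why the statement is asserted for every $\e>0$ rather than for $\e=0$.
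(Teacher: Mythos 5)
Your proposal is correct, and it follows the same architecture as the paper's proof in Appendix~\ref{sec:appendA}: tensorization over a partition into $\e$-small cubes using the complete independence of $A_0$, a per-coordinate oscillation bound, the locality transfer $\oscd{A_0,Q_n}Z\le\oscd{A,Q_n+\bar B_R}Z$ (the paper's Step~3, argued there via $\Ld^2$-projections for the covariance and directly for the entropy), and averaging over $x\in Q_n$ to trade the sum over cubes for an integral over ball centers at radius $R+\e$. For the covariance the two routes are computationally identical: your orthogonal-increment decomposition $\cov{Y}{Z}=\sum_n\expec{\Delta_nY\,\Delta_nZ}$ with $\expec{(\Delta_nY)^2}\le\expec{\varC{Y}{\G_n}}$ is the same telescoping as the paper's Step~1 (\`a la Lu--Yau), with Popoviciu playing the role of the paper's resampling representation of the conditional variance. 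The one genuine divergence is the entropy: you invoke entropy tensorization plus the one-coordinate bound $\entC{Z^2}{\G_n}\le C\,\big(\oscd{A_0,Q_n}Z\big)^2$, which you rightly flag as the delicate point but do not prove, whereas the paper proves exactly this statement (with constant $2$, in the sharper form $\supessd{A_0'}\big(Z(A_0)-Z(A_0^{x})\big)^2$) in its Step~2, using $a\log a-a+1\le(a-1)^2$ to compare entropy with variance and an exchangeability trick to cancel the resulting denominator $\E_k[\Pi_k[Z(A_0)^2]]$. So the lemma you assume is the crux rather than a side issue; fortunately it is true with a universal constant: writing $o:=\oscd{A_0,Q_n}Z$ and arguing under the conditional measure, if $\infess|Z|\ge o$ then ($o=0$ aside) $Z$ has a.s.\ constant sign and $\Ent[W]\le\Var[W]/\expec{W}$ with $W=Z^2$ gives the bound $\tfrac94 o^2$, while if $\infess|Z|<o$ then $\supess Z^2\le 4o^2$ and $\Ent[W]\le\expec{W}\log\big(\supess W/\expec{W}\big)\le e^{-1}\supess W$ gives $\tfrac4e o^2$. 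With that lemma supplied your proof is complete; what it buys is modularity (classical Efron--Stein and entropy tensorization plus two one-coordinate lemmas), while the paper's computation is self-contained and delivers the slightly stronger $\supess$-of-squared-difference form. One small correction: the disjointness $Q_n\cap\big((Q_n+\bar B_R)^c+B_R\big)=\varnothing$ holds for any partition and costs nothing, so the slack $\e$ is needed only for the cube-to-ball inclusion, i.e.\ it reflects the positive diameter of the tensorization cubes, not the locality step.
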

Note that any field satisfying the assumption in the above criterion has finite range of dependence.
Conversely, any field that satisfies~($\osc$-CI) has necessarily finite range of dependence (cf.~\cite[Proposition~1.4(iv)]{DG1}).
One does not expect, however, finite range of dependence to be a sufficient condition for the validity of (SG) in general (compare indeed with the constructions in~\cite{Burton-Goulet-Meester-93,Bradley-94}).

\subsection{Transformation of product structures}\label{eq:transfiid}

Let the random field $A$ on $\R^d$ be $\sigma(\X)$-measurable for some random field $\X$ defined on some measure space $X$ and with values in some measurable space $M$. Assume that we have a partition $X=\biguplus_{x\in\Z^d,t\in\Z^l}X_{x,t}$, on which $\X$ is {\it completely independent}, that is, the restrictions $(\X|_{X_{x,t}})_{x\in\Z^d,t\in\Z^l}$ are all independent.

\medskip

In the sequel, the case $l=0$ (that is, the case when there is no variable $t$) is referred to as the non-projective case, while $l\ge1$ is the projective case. Note that the non-projective case is a particular case of the projective one, simply defining $X_{x,0}=X_x$ and $X_{x,t}=\varnothing$ for all $t\ne0$. The random field $\X$ can be e.g.\@ a random field on $\R^d\times\R^l$ with values in some measure space (choosing $X=\R^d\times\R^l$, $X_{x,t}=Q^d(x)\times Q^l(t)$, and $M$ the space of values), or a random point process (or more generally a random measure) on $\R^d\times\R^l\times X'$ for some measure space $X'$ (choosing $X=\Z^d\times\Z^l\times X'$, $X_{x,t}=\{x\}\times\{t\}\times X'$, and $M$ the space of measures on $Q^d\times Q^l\times X'$).

\medskip

Let $\X'$ be some given i.i.d.\@ copy of $\X$. For all $x,t$, we define a perturbed random field $\X^{x,t}$ by setting $\X^{x,t}|_{X\setminus X_{x,t}}=\X|_{X\setminus X_{x,t}}$ and $\X^{x,t}|_{X_{x,t}}=\X'|_{X_{x,t}}$. By complete independence, the random fields $\X$ and $\X^{x,t}$ (resp.\@ $A=A(\X)$ and $A(\X^{x,t})$)  have the same law. Arguing as in the proof of Proposition~\ref{prop:criMSG} (cf.~\eqref{eq:tens-cov} and~\eqref{eq:tens-ent}
in Appendix~\ref{sec:appendA}), the complete independence assumption ensures that $\X$ satisfies the following standard functional inequalities, which are variations around the Efron-Stein inequality~\cite{Efron-Stein-81,Steele-86}.
\begin{prop}\label{prop:sgindep}
For all $\sigma(\X)$-measurable random variables $Y(\X),Z(\X)$, we have
\begin{gather}
\var{Y(\X)}\le\frac12\,\expecM{\sum_{x\in\Z^d}\sum_{t\in\Z^l}\big(Y(\X)-Y(\X^{x,t})\big)^2},\label{eq:var-X}\\
\ent{Y(\X)}\le2\,\expecM{\sum_{x\in\Z^d}\sum_{t\in\Z^l}\supessd{\X'}\big(Y(\X)-Y(\X^{x,t})\big)^2},\label{eq:ent-X}\\
\cov{Y(\X)}{Z(\X)}\le \frac12\sum_{x\in\Z^d}\sum_{t\in\Z^l}\expec{ \big(Y(\X)-Y(\X^{x,t})\big)^2}^\frac12\expec{ \big(Z(\X)-Z(\X^{x,t})\big)^2}^\frac12.\label{eq:cov-X}
\end{gather}\qedhere
\end{prop}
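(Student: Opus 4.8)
The plan is to read \eqref{eq:var-X}--\eqref{eq:ent-X} as Efron--Stein / tensorization estimates over the countable family of independent blocks of the partition, and to prove them by a Doob martingale decomposition together with a per-block estimate. Since $\Z^d\times\Z^l$ is countable, I would first enumerate the blocks as $B_1,B_2,\dots$ and set $\xi_i:=\X|_{B_i}$; by complete independence the $\xi_i$ are independent, $Y(\X)=Y(\xi_1,\xi_2,\dots)$, and each perturbed field $\X^{x,t}$ amounts to replacing a single coordinate $\xi_i$ by an independent copy. I would then introduce the filtration $\mathcal F_n:=\sigma(\xi_1,\dots,\xi_n)$ and the Doob martingale $M_n:=\expec{Y(\X)\|\mathcal F_n}$, which converges to $Y(\X)$ in $L^2$ (the only nontrivial case) because $\bigvee_n\mathcal F_n=\sigma(\X)$.

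For the variance bound \eqref{eq:var-X}, I would use that the increments $\Delta_i:=M_i-M_{i-1}$ are pairwise orthogonal in $L^2$, so that $\var{Y(\X)}=\sum_i\expec{\Delta_i^2}$. As $\xi_i$ is independent of $\mathcal F_{i-1}$, the increment $\Delta_i$ is $M_i$ minus its average over $\xi_i$; since the single-coordinate variance functional is convex, conditional Jensen gives $\expec{\Delta_i^2}\le\expec{\mathrm{Var}_i[Y]}$, where $\mathrm{Var}_i$ denotes the variance in the coordinate $\xi_i$ with all others frozen. The elementary resampling identity $\mathrm{Var}_i[Y]=\tfrac12\,\expeCm{(Y(\X)-Y(\X^{x_i,t_i}))^2}{(\xi_j)_{j\ne i}}$ then yields $\expec{\Delta_i^2}\le\tfrac12\,\expec{(Y(\X)-Y(\X^{x_i,t_i}))^2}$, and summation over $i$ gives \eqref{eq:var-X}. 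For the covariance bound \eqref{eq:cov-X} I would run the same decomposition for $Y$ and $Z$ simultaneously: orthogonality gives $\cov{Y(\X)}{Z(\X)}=\sum_i\expec{\Delta_i^Y\Delta_i^Z}$, and Cauchy--Schwarz on each term, combined with the per-block bound just obtained, produces the stated bilinear estimate.

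For the entropy bound \eqref{eq:ent-X} orthogonality is unavailable, and I would instead rely on the sub-additivity (tensorization) of entropy over a product structure --- exactly the estimate \eqref{eq:tens-ent} established in Appendix~\ref{sec:appendA}, iterated over the blocks --- to reduce to a single-block bound whose summation over $i$ reconstitutes the right-hand side of \eqref{eq:ent-X}. Concretely, it remains to control each single-block conditional entropy by (twice) the conditional expectation of $\supessd{\X'}(Y(\X)-Y(\X^{x_i,t_i}))^2$; this is a genuine one-dimensional logarithmic Sobolev estimate, and it is where the essential supremum over the resampled block and the constant $2$ enter, the argument departing here from the purely $L^2$ reasoning of the variance case.

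The martingale orthogonality and the resampling identity are routine; I expect the main obstacle to be twofold. First, the single-block entropy estimate is the only step that is not a soft Hilbert-space identity and must be handled by a real log-Sobolev (rather than spectral-gap) argument, which is precisely why \eqref{eq:ent-X} carries the essential supremum and the worse constant. Second, one must pass carefully to countably many blocks: justifying the interchange of the infinite sum with the expectation, the $L^2$- and $L\log L$-convergence of the Doob martingale, and the measurability of the conditional essential suprema in \eqref{eq:ent-X}. Both issues parallel the finite-block tensorization behind Proposition~\ref{prop:criMSG}, so in practice I would first prove everything for finitely many blocks and then let the number of blocks tend to infinity.
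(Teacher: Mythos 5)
Your proposal is correct and follows essentially the same route as the paper: the paper's own proof of this proposition simply invokes the tensorization argument of Appendix~\ref{sec:appendA} (the telescopic, Doob-martingale decomposition behind \eqref{eq:tens-cov} and \eqref{eq:tens-ent}) together with the per-block resampling identity, which is exactly your Efron--Stein scheme for \eqref{eq:var-X}--\eqref{eq:cov-X} and your tensorization-plus-single-block plan for \eqref{eq:ent-X}. The only nuance is that the single-block entropy bound you defer to a ``genuine one-dimensional log-Sobolev estimate'' is handled elementarily in the appendix --- via $a\log a-a+1\le(a-1)^2$ to reduce conditional entropy to conditional variance, then the same resampling, Cauchy--Schwarz, and an exchangeability argument --- which is precisely what produces the essential supremum and the constant $2$ (the $L^\infty$ right-hand side is what makes a purely $L^2$ argument suffice).
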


We briefly comment on the form of the logarithmic Sobolev inequality. 
A common difficulty when applying \eqref{eq:ent-X} stems from the supremum in the RHS (compared to the
variance estimate \eqref{eq:var-X}). 
In~\cite{BLM-03}, Boucheron, Lugosi and Massart introduced a variant of    \eqref{eq:ent-X} 
in exponential form that avoids taking a supremum (see also~\cite{Wu00} for the Poisson process, and its subtle applications~\cite{BachPec} to stochastic geometry).
It seems that the approach we develop below based on the notion of action radius and conditioning 
behaves badly in exponential form, and we are currently unable to combine it with the techniques of~\cite{BLM-03}.

\subsection{Abstract criteria and action radius}\label{chap:condunif-pasunif}

We now describe general situations for which the functional inequalities for the ``hidden'' product structure $\X$ are deformed into multiscale inequalities for the random field $A$.
We distinguish between the following two cases:
\begin{itemize}
\item {\it Deterministic localization}, that is, when the random field $A$ is a deterministic convolution of some product structure, so that the ``dependence pattern'' is prescribed deterministically a priori. It leads to multiscale functional inequalities with the functional derivative $\parfct{}$.
\smallskip\item {\it Random localization}, that is, when the ``dependence pattern'' is encoded by the underlying product structure $\X$ itself (and therefore may depend on the realization, whence the terminology ``random''). The localization of the dependence pattern is then measured in terms of what we call the {\it action radius} and it leads to multiscale inequalities with the derivative $\osc$.  This generalizes the idea of local transformations in Proposition~\ref{prop:criMSG}, which would indeed correspond to the case of a deterministic bound on the action radius.
\end{itemize}

The case of {\it deterministic} localization mainly concerns Gaussian fields, which have been thoroughly studied in the literature. Multiscale functional inequalities for such random fields constitute a possible alternative to functional inequalities in terms of Malliavin calculus, cf.~e.g.~\cite{Houdre-PerezAbreu-95,Nourdin-Peccati-12}.
As emphasized in the companion article~\cite[Appendix~A]{DG3}, multiscale functional inequalities can then indeed be directly deduced from the corresponding Malliavin results: the key relies on a deterministic radial change of variables to reformulate Hilbert norms encoding the covariance structure into multiscale weighted norms.
To remain in the spirit of our general approach, a self-contained proof is included in Appendix~\ref{chap:condunif} where multiscale functional inequalities are established via the deformation of standard functional inequalities for i.i.d.\@ Gaussian sequences.

\medskip

In the rest of this section, we focus on the more original setting of {\it random} localization, which involves a \emph{random} change of variable due to the randomness of the dependence pattern.
We use the notation of Section~\ref{eq:transfiid}: $A$ is a $\sigma(\X)$-measurable random field on $\R^d$, where $\X$ is a completely independent random field on some measure space $X=\biguplus_{x\in\Z^d,t\in\Z^l}X_{x,t}$ with values in some measurable space $M$.
The following definition is inspired by the notion of stabilization radius first introduced by Lee~\cite{Lee-97,Lee-99} and crucially used in the works by Penrose, Schreiber, and Yukich on random sequential adsorption processes~\cite{Penrose-Yukich-02,Penrose-05,PY-05,Schreiber-Penrose-Yukich-07}.
\begin{defin}\label{def:ar}
Given an i.i.d.\@ copy $\X'$ of the field $\X$, an {\it action radius for $A$ wrt $\X$ on $X_{x,t}$} (with reference perturbation $\X'$), if it exists, is defined as a nonnegative $\sigma(\X,\X')$-measurable random variable $\rho$ such that we have a.s.,
\[\left.A(\X^{x,t})\right|_{\R^d\setminus (Q(x)+B_\rho)}=\left.A(\X)\right|_{\R^d\setminus(Q(x)+B_\rho)},\]
where the perturbed random field $\X^{x,t}$ is defined as before by $\X^{x,t}|_{X\setminus X_{x,t}}:=\X|_{X\setminus X_{x,t}}$ and $\X^{x,t}|_{X_{x,t}}:=\X'|_{X_{x,t}}$.
\end{defin}
Note that if $\X=A_0$ is a random field on $\R^d$, and if for some $R>0$ the random field $A$ is an $R$-local transformation of $A_0$ in the sense of Proposition~\ref{prop:criMSG}, then the constant $\rho=R$ is an action radius for $A$ wrt $A_0$ on any set.
Reinterpreted in the case when $\X=\Pc$ is a random point process on $\R^d\times\R^{l}\times X'$ for some measure space $X'$, the above definition takes on the following guise: given a subset $E\times F\subset \R^d\times \R^{l}$ and given an i.i.d.\@ copy~$\Pc'$ of~$\Pc$, an {\it action radius for $A$ wrt $\Pc$ on $E\times F$}, if it exists, is a nonnegative random variable $\rho$ such that we have a.s.,
\[\left.A\Big(\big(\Pc\setminus (E\times F\times X')\big)\bigcup \big(\Pc'\cap (E\times F\times X')\big)\Big)\right|_{\R^d\setminus (E+B_\rho)}=\left.A(\Pc)\right|_{\R^d\setminus (E+B_\rho)}.\]

We display two general criteria, Theorems~\ref{th:ar} and~\ref{th:ar-rpm} below,
for the validity of multiscale functional inequalities in terms of the properties of an action radius. 
The argument consists in conditioning wrt the action radius and then using some independence in order to avoid the use of H\"older's inequality (which would lead to a loss of integrability in the functional inequalities). We start in Theorem~\ref{th:ar} with the simplest dependence pattern (cf.\@ independence assumption (c) below for the action radius), which already encompasses some examples of interest (like spherical inclusions centered at the points of a Poisson point process with i.i.d.\@ random radii, cf.\@ Section~\ref{chap:genincl}).
Note that the additional condition for the validity of the multiscale logarithmic Sobolev inequality below is rather stringent.
\begin{theor}\label{th:ar}
Let the fields $A,\X$ be as above. Given an i.i.d.\@ copy $\X'$ of the field $\X$, assume that
\begin{enumerate}[(a)]
\item For all $x,t$, there exists an action radius $\rho_{x,t}$ for $A$ wrt $\X$ in $X_{x,t}$.
\smallskip\item The transformation $A$ of $\X$ is stationary, that is, the random fields $A(\X(\cdot+x,\cdot))$ and $A(\X)(\cdot+x)$ have the same law for all $x\in\Z^d$.
%
\item For all $x,t$ the action radius $\rho_{x,t}$ is independent of $A|_{\R^d\setminus (Q(x)+B_{f(\rho_{x,t})})}$ for some \emph{influence function} $f:\R_+\to\R_+$ with $f(u)\ge u$ for all $u$.
\end{enumerate}
{\color{black}With the convention $\frac00=0$, set
\begin{gather*}
\pi(\ell):=(\ell+1)^{d}\sum_{t\in\Z^l}\tilde\pi(t,\ell),\qquad
\pi(t,\ell):=\pr{\X^{0,t}\ne \X}\frac{\prC{\ell-1\le\rho_{0,t}<\ell}{\X^{0,t}\ne \X}}{\pr{\rho_{0,t}<\ell}}.
\end{gather*}
Then for all $\sigma(A)$-measurable random variables $Z(A)$ we have
%
\begin{equation}\label{eq:arwsg2}
\var{Z(A)}\le\frac12\,\expecM{\int_0^\infty\int_{\R^d}\,\Big(\oscd{A,B_{\sqrt d(f(\ell)+1)}(y)}Z(A)\Big)^2dx\,(\ell+1)^{-d} \pi(\ell)\,d\ell},
\end{equation}
If in addition the random variable $\rho_{x,t}$ is $\sigma(\X)$-measurable for all $x,t$, there holds
\begin{equation}
\ent{Z(A)}\le2\,\expec{ \int_{0}^\infty \int_{\R^d}\Big(\oscd{A,B_{\sqrt{d}(f(\ell)+1)}(x)}Z(A)\Big)^2dx\,(\ell+1)^{-d}\pi(\ell)\,d\ell}.\label{eq:arlsi2}\qedhere
\end{equation}}
\end{theor}

\begin{rem}\label{rem:ar-ci}
{\color{black}
Rather starting from the covariance form~\eqref{eq:cov-X}, the proof below further yields, next to~\eqref{eq:arwsg2}, for all $\sigma(A)$-measurable random variables $Y(A),Z(A)$,
\begin{multline}\label{eq:arci2}
\qquad\cov{Y(A)}{Z(A)}\le\frac12\sum_{t\in\Z^l}\int_{\R^d}\bigg(\int_0^\infty\pi(t,\ell)\,\expec{\Big(\oscd{A,B_{\sqrt d(f(\ell)+1)}(x)}Y(A)\Big)^2}d\ell\bigg)^\frac12\\
\times\bigg(\int_0^\infty\pi(t,\ell)\,\expec{\Big(\oscd{A,B_{\sqrt d(f(\ell)+1)}(x)}Z(A)\Big)^2}d\ell\bigg)^\frac12dx.
\end{multline}
This can in general not be usefully recast into the canonical form of the multiscale covariance inequality from Definition~\ref{defi:TFI}, except in some examples (cf.~e.g.~Remark~\ref{rem:cov-alm} and Proposition~\ref{prop:genincl-loss}(i) below).}
\end{rem}

{\color{black}In many cases of interest, the above independence assumption~(c) is however too stringent: making $\rho_{x,t}$ independent of $A|_{\R^d\setminus(Q(x)+B_{\rho^*})}$ may indeed require to construct $\rho^*$ as a larger random variable that is not $\sigma(\rho_{x,t})$-measurable.}
We turn to a more complex situation when the dependence pattern is still sufficiently well-controlled in terms of a \emph{family} of successive action radii. The measurability assumption~(c) below mimics the dependence properties of the action radius for the Voronoi tessellation of a Poisson point process (cf.~Section~\ref{chap:tessel}) and for the random parking process (cf.~Section~\ref{chap:RPM}).

\begin{theor}\label{th:ar-rpm}
Let $A=A(\X)$ be a $\sigma(\X)$-measurable random field on $\R^d$, where $\X$ is a completely independent random field on some measure space $X=\biguplus_{x\in\Z^d}X_{x}$ with values in some measurable space $M$. For all $x\in\Z^d$, $\ell\in\N$, set $X_x^\ell:=\bigcup_{y\in\Z^d:|x-y|_\infty\le\ell}X_y$.
Given an i.i.d.\@ copy $\X'$ of the field $\X$, let the perturbed field $\X^{x,\ell}$ be defined by
\[\X^{x,\ell}|_{X\setminus X_x^\ell}=\X|_{X\setminus X_x^\ell},\qquad\text{and}\qquad\X^{x,\ell}|_{X_x^\ell}=\X'|_{X_x^\ell},\]
and assume that
\begin{enumerate}[(a)]
\item For all $x,\ell$, there exists an action radius $\rho_x^\ell$ for $A$ wrt $\X$ in $X_x^\ell$, that is, a nonnegative random variable $\rho_x^\ell$ such that we have a.s.,
\[A(\X^{x,\ell})|_{\R^d\setminus (Q_{2\ell+1}(x)+B_{\rho_x^\ell})}=A(\X)|_{\R^d\setminus (Q_{2\ell+1}(x)+B_{\rho_x^\ell})}.\]
\item The transformation $A$ of $\X$ is stationary, that is, the random fields $A(\X(\cdot+x,\cdot))$ and $A(\X)(\cdot+x)$ have the same law for all $x\in\Z^d$.
\item For all $x,\ell$, the random variable $\rho_x^\ell$ is $\sigma\Big(\X\big|_{X_x^{\ell+\rho_x^\ell}\setminus X_x^\ell}\Big)$-measurable.\footnote{{\color{black}This is understood as follows: for all $r\ge0$ the event $\{\rho_x^\ell>r\}$ belongs to $\sigma\Big(\X\big|_{X_x^{\ell+r}\setminus X_x^\ell}\Big)$.}}\\
(In particular, for all $x,\ell,R$, given the event $\rho_x^\ell\le R$, the random variables $\rho_x^\ell$ and~$\rho_x^{\ell+R}$ are independent.)
\end{enumerate}
Assume that $R\ge1$ can be chosen large enough so that
\begin{align}\label{eq:choice-R}
\sup_{\ell\ge R}\,\prm{\rho_x^\ell\ge\ell}\le\tfrac14,
\end{align}
let $\pi_0:\R_+\to\R_+$ be a non-increasing function such that $\pr{\frac14\ell\le\rho_x^{\ell_0}<\ell}\le\pi_0(\ell)$ holds for all $0\le\ell_0\le\frac14\ell$, and define the weight
{\color{black}\[\pi(\ell):=(\ell+1)^{d}\begin{cases}1,&\text{if $\ell\le 4R$};\\8\ell^{-1}\pi_0(\frac12\ell),&\text{if $\ell>4R$}.\end{cases}\]
Then for all $\sigma(A)$-measurable random variables $Z(A)$ we have
\begin{eqnarray}
\var{Z(A)}&\le&\frac12\,\expec{\int_0^\infty\int_{\R^d}{\Big(\oscd{A,B_{\sqrt d(\ell+1)}(x)}Z(A)\Big)^2}dx\,(\ell+1)^{-d}\pi(\ell) d\ell},\label{eq:AG-SG}\\
\ent{Z(A)}&\le&2\,\expec{\int_0^\infty\int_{\R^d}{\Big(\oscd{A,B_{\sqrt d(\ell+1)}(x)}Z(A)\Big)^2}dx\,(\ell+1)^{-d}\pi(\ell) d\ell}.\label{eq:AG-LSI}
\end{eqnarray}}
\end{theor}

{\color{black}\begin{rem}\label{rem:cov-alm}
Rather starting from the covariance form~\eqref{eq:cov-X}, the proof further yields, for all $\sigma(A)$-measurable random variables $Y(A),Z(A)$,
\begin{multline*}
\cov{Y(A)}{Z(A)}\le \frac12\int_{\R^d}\bigg(\int_0^\infty\expec{\Big(\oscd{A,B_{\sqrt d(\ell+1)}(x)}Y(A)\Big)^2}(\ell+1)^{-d}\pi(\ell) d\ell\bigg)^\frac12\\
\times\bigg(\int_0^\infty\expec{\Big(\oscd{A,B_{\sqrt d(\ell+1)}(x)}Z(A)\Big)^2}(\ell+1)^{-d}\pi(\ell) d\ell\bigg)^\frac12dx.
\end{multline*}
In general this cannot be recast into the canonical form of the multiscale covariance inequality from Definition~\ref{defi:TFI} except if the weight $\pi$ is decaying enough: If $\pi$ is non-increasing and satisfies $\int_0^\infty(\ell+1)^{-\frac d2}\pi(\ell)^\frac12d\ell<\infty$, it indeed follows from the discrete $\ell^1$--$\ell^2$ inequality that
\begin{multline*}
\cov{Y(A)}{Z(A)}\lesssim_\pi \int_0^\infty\int_{\R^d}\expec{\Big(\oscd{A,B_{\sqrt d(\ell+3)}(x)}Y(A)\Big)^2}^\frac12\\
\times \expec{\Big(\oscd{A,B_{\sqrt d(\ell+3)}(x)}Z(A)\Big)^2}^\frac12dx\,(\ell+1)^{-\frac d2}\pi(\ell)^\frac12 d\ell,
\end{multline*}
where the square root on the weight is not harmful when $\pi$ has superalgebraic decay.
\end{rem}}

\begin{rem}\label{rem:rescale-1}
We briefly address the claim contained in Remark~\ref{rem:proofplus} in the context of examples of random fields with random localization.
By definition, for all $L\ge1$, an action radius for $A$ wrt $\X$ on $X_{0,t}$ is still an action radius for the rescaled field $A_L:=A(L\cdot)$ wrt $\X$ on $X_{0,t}$.
This proves that in Theorems~\ref{th:ar} and~\ref{th:ar-rpm} any result stated for the field $A$ also holds in the very same form (with the same constants and weights) for $A_L$ with $L\ge1$.
\end{rem}

We start with the proof of Theorem~\ref{th:ar}.
\begin{proof}[Proof of Theorem~\ref{th:ar}]
Recall that for all $x,t$ the perturbed random field $\X^{x,t}$ is defined by $\X^{x,t}|_{X\setminus X_{x,t}}=\X|_{X\setminus X_{x,t}}$ and $\X^{x,t}|_{X_{x,t}}=\X'|_{X_{x,t}}$. By complete independence of $\X$, the fields $\X$ and $\X^{x,t}$ (hence $A=A(\X)$ and $A(\X^{x,t})$) have the same law. {\color{black}By the stationarity assumption~(b) for~$A$, the action radii can be chosen such that the law of $\rho_x^\ell$ is independent of $x$.}
The strategy of the proof consists in deforming the functional inequalities of Proposition~\ref{prop:sgindep} wrt the transformation $A(\X)$ in terms of the action radii. We split the proof into two steps.

\medskip

{\color{black}
\step1 Proof of the Poincaré inequality~\eqref{eq:arwsg2}.

\noindent
We start from~\eqref{eq:var-X} in form of
\begin{equation}\label{eq:var-X-re0}
\var{Z(A)}\le\frac12\sum_{x\in\Z^d}\sum_{t\in\Z^l}\expec{\big(Z(A)-Z(A(\X^{x,t}))\big)^2},
\end{equation}
and for all $x,t$ we consider the following decomposition, conditioning wrt the values of the action radius $\rho_{x,t}$,
\begin{equation*}
\expec{\big(Z(A)-Z(A(\X^{x,t}))\big)^2}=\int_0^\infty \expec{\big(Z(A)-Z(A(\X^{x,t}))\big)^2\mathds1_{\ell-1\le \rho_{x,t}< \ell}}d\ell,
\end{equation*}
%
Recalling that the influence function $f$ satisfies $f(u)\ge u$ for all $u$, we find
\begin{eqnarray*}
\lefteqn{\expec{\big(Z(A)-Z(A(\X^{x,t}))\big)^2}}\\
&=&\int_0^\infty \expec{\big(Z(A)-Z(A(\X^{x,t}))\big)^2\mathds1_{\X|_{X_{x,t}}\ne \X'|_{X_{x,t}}}\mathds1_{\ell-1\le \rho_{x,t}< \ell}}d\ell\\
&\le&\int_0^\infty \expec{\Big(\oscd{A,Q_{2f(\ell)+1}(x)}Z(A)\Big)^2\mathds1_{\X|_{X_{x,t}}\ne \X'|_{X_{x,t}}}\mathds1_{\ell-1\le \rho_{x,t}< \ell}}d\ell\\
&=&\int_0^\infty \expeC{\Big(\oscd{A,Q_{2f(\ell)+1}(x)}Z(A)\Big)^2\mathds1_{\X|_{X_{x,t}}\ne \X'|_{X_{x,t}}}\mathds1_{\rho_{x,t}\ge\ell-1}}{\rho_{x,t}<\ell}\pr{\rho_{x,t}< \ell}d\ell.
\end{eqnarray*}
By definition, given $\rho_{x,t}<\ell$, the restriction $A|_{\R^d\setminus Q_{2f(\ell)+1}(x)}$ is independent of $\X|_{X_{x,t}}$ and $\X'|_{X_{x,t}}$. In addition, by assumption~(c), given $\rho_{x,t}<\ell$, the restriction $A|_{\R^d\setminus Q_{2f(\ell)+1}(x)}$ is independent of $\rho_{x,t}$. We may thus deduce
\begin{eqnarray*}
\lefteqn{\expec{\big(Z(A)-Z(A(\X^{x,t}))\big)^2}}\\
&\le&\int_0^\infty \expeC{\Big(\oscd{A,Q_{2f(\ell)+1}(x)}Z(A)\Big)^2}{\rho_{x,t}<\ell}\pr{\ell-1\le\rho_{x,t}<\ell,\,\X|_{X_{x,t}}\ne \X'|_{X_{x,t}}}d\ell\\
&\le&\int_0^\infty \expec{\Big(\oscd{A,Q_{2f(\ell)+1}(x)}Z(A)\Big)^2}\frac{\pr{\ell-1\le\rho_{x,t}<\ell,\,\X|_{X_{x,t}}\ne \X'|_{X_{x,t}}}}{\pr{\rho_{x,t}<\ell}}\,d\ell.
\end{eqnarray*}
%
%
By stationarity of the action radii, this turns into
\begin{multline}\label{eq:step2-ar-1}
\expec{\big(Z(A)-Z(A(\X^{x,t}))\big)^2}\le
\int_0^\infty\expec{\Big(\oscd{A,Q_{2f(\ell)+1}(x)}Z(A)\Big)^2}\pr{\X|_{X_{0,t}}\ne \X'|_{X_{0,t}}}\\
\times \frac{\prC{\ell-1\le \rho_{0,t}< \ell}{\X|_{X_{0,t}}\ne \X'|_{X_{0,t}}}}{\pr{\rho_{0,t}< \ell}}\,d\ell.
\end{multline}
%
%
Injecting this into~\eqref{eq:var-X-re0} and using the definition of the weight $\pi$ in the statement, we obtain
\[\var{Z(A)}\le\frac12\int_0^\infty(\ell+1)^{-d}\pi(\ell)\sum_{x\in\Z^d}\expec{\Big(\oscd{A,Q_{2f(\ell)+1}(x)}Z(A)\Big)^2}d\ell.\]
Bounding sums by integrals and replacing cubes by balls, the conclusion~\eqref{eq:arwsg2} follows.

\medskip
\step2 Proof of the logarithmic Sobolev inequality~\eqref{eq:arlsi2}. 

\noindent
We rather start from~\eqref{eq:ent-X} in form of
\begin{equation}\label{eq:var-X-re}
\ent{Z(A)}\le2\sum_{x\in\Z^d}\sum_{t\in\Z^l}\expec{\supess_{\X'}\big(Z(A)-Z(A(\X^{x,t}))\big)^2},
\end{equation}
and for all $x,t$ we write, conditioning wrt the values of the action radius $\rho_{x,t}$,
\begin{multline*}
\expec{\supess_{\X'}\big(Z(A)-Z(A(\X^{x,t}))\big)^2}\\
\le\int_0^\infty\expec{\supessd{\X'}\Big(\big(Z(A(\X))-Z(A(\X^{x,t}))\big)^2\mathds1_{\ell-1\le \rho_{x,t}<\ell}\Big)}d\ell\\
\le\int_0^\infty\expec{\Big(\oscd{A,Q_{2\ell+1}(x)}Z(A)\Big)^2\supessd{\X'}\big(\mathds1_{\ell-1\le \rho_{x,t}<\ell}\big)}d\ell.
\end{multline*}
If the random variable $\rho_{x,t}$ is $\sigma(\X)$-measurable, this simply becomes
\[\expec{\supess_{\X'}\big(Z(A)-Z(A(\X^{x,t}))\big)^2}
\le\int_0^\infty\expec{\Big(\oscd{A,Q_{2\ell+1}(x)}Z(A)\Big)^2\mathds1_{\ell-1\le \rho_{x,t}<\ell}}d\ell,\]
and the conclusion~\eqref{eq:arlsi2} follows as in Step~1.}
%
\end{proof}

Next, we turn to the proof of Theorem~\ref{th:ar-rpm}.

\begin{proof}[Proof of Theorem~\ref{th:ar-rpm}]
We only prove the Poincaré inequality~\eqref{eq:AG-SG}. The proof of the logarithmic Sobolev inequality~\eqref{eq:AG-LSI} is similar, rather starting from~\eqref{eq:ent-X}.
For all $x$, let the field $\X^x$ be defined by $\X^x|_{X\setminus X_{x}}=\X|_{X\setminus X_{x}}$ and $\X^x|_{X_{x}}=\X'|_{X_{x}}$, and recall that the Poincaré inequality~\eqref{eq:var-X} for $\X$ takes the form
\begin{align*}
\var{Z(A)}&\le\frac12\sum_{x\in\Z^d}\expec{\big(Z(A)-Z(A(\X^x))\big)^2}.
\end{align*}
{\color{black}Bounding sums by integrals and replacing cubes by balls, the conclusion~\eqref{eq:AG-SG} then follows provided that we prove for all $x\in\Z^d$,
\begin{align}\label{eq:AG-centered}
\expec{\big(Z(A)-Z(A(\X^x))\big)^2}\le\int_0^\infty \expec{\Big(\oscd{A,Q_{2\ell+1}(x)}Z(A)\Big)^2}(\ell+1)^{-d}\pi(\ell)\,d\ell.
\end{align}}
Without loss of generality, it suffices to consider the case $x=0$. Moreover, by an approximation argument, we may assume that the random variable $Z(A)$ is bounded.
We use the shorthand notation $\rho(r):=r+\rho_0^r$ and $\oscd{r}:=\oscd{A,Q_{2r+1}}$. The choice~\eqref{eq:choice-R} of $R$ then takes the form
\begin{align}\label{eq:choice-R2}
\sup_{\ell\ge R}\,\prm{\rho(\ell)\ge2\ell}\le\tfrac14.
\end{align}
We split the proof into two steps.

\medskip

\step1 Conditioning argument.\\
In this step, we prove for all $r_2\ge 2r_1\ge2R$,
\begin{multline}\label{eq:AG-intermed}
\expec{\Big(\oscd{r_2}Z(A)\Big)^2\mathds1_{\frac12r_2\le\rho(r_1)<r_2}}\le2\,\pr{\tfrac12 r_2\le\rho(r_1)<r_2}\\
\times\bigg(\expec{\Big(\oscd{2r_2}Z(A)\Big)^2}+\sum_{\ell=2}^\infty\expec{\Big(\oscd{2^\ell r_2}Z(A)\Big)^2\mathds1_{2^{\ell-1}r_2\le \rho(r_2)<2^{\ell}r_2}}\bigg).
\end{multline}
Conditioning the LHS wrt the value of $\rho(r_2)$, we decompose
\begin{multline}\label{eq:decomp-AG-2}
\expec{\Big(\oscd{r_2}Z(A)\Big)^2\mathds1_{\frac12r_2\le\rho(r_1)<r_2}}\,\le\,\expec{\Big(\oscd{r_2}Z(A)\Big)^2\mathds1_{\frac12r_2\le\rho(r_1)<r_2}\mathds1_{\rho(r_2)<2r_2}}\\
+\sum_{\ell=2}^\infty\expec{\Big(\oscd{r_2}Z(A)\Big)^2\mathds1_{\frac12r_2\le\rho(r_1)<r_2}\mathds1_{2^{\ell-1}r_2\le\rho(r_2)<2^{\ell}r_2}}.
\end{multline}
We separately estimate the two RHS terms and we start with the first. For that purpose, note that the definition of $\rho$ and assumption~(c) ensure that, given $\rho(r_1)\le r_2$ and $\rho(r_2)\le r_3$, the random variable $\rho(r_1)$ is independent of $\oscd{r_3}Z(A)$. This observation directly yields
\begin{eqnarray*}
\lefteqn{\expec{\Big(\oscd{r_2}Z(A)\Big)^2\mathds1_{\frac12r_2\le\rho(r_1)<r_2}\mathds1_{\rho(r_2)<2r_2}}}\\
&\le&\expeC{\Big(\oscd{2r_2}Z(A)\Big)^2\mathds1_{\rho(r_1)\ge\frac12r_2}}{\rho(r_1)<r_2,\,\rho(r_2)<2r_2}\prm{\rho(r_1)<r_2,\,\rho(r_2)<2r_2}\\
&\le&\expec{\Big(\oscd{2r_2}Z(A)\Big)^2}\frac{\prm{\frac12 r_2\le\rho(r_1)<r_2}}{\pr{\rho(r_1)<r_2,\,\rho(r_2)<2r_2}}\\
&\le&\expec{\Big(\oscd{2r_2}Z(A)\Big)^2}\frac{\prm{\frac12 r_2\le\rho(r_1)<r_2}}{1-\pr{\rho(r_1)\ge r_2}-\pr{\rho(r_2)\ge2r_2}}.
\end{eqnarray*}
For $r_2\ge2r_1\ge2R$, the choice~\eqref{eq:choice-R2} of $R$ yields
\[\pr{\rho(r_1)\ge r_2}+\pr{\rho(r_2)\ge2r_2}\le\pr{\rho(r_1)\ge 2r_1}+\pr{\rho(r_2)\ge2r_2}\le\tfrac12,\]
so that the above takes the simpler form 
\begin{multline}\label{eq:bound-base}
{\expec{\Big(\oscd{r_2}Z(A)\Big)^2\mathds1_{\frac12r_2\le\rho(r_1)<r_2}\mathds1_{\rho(r_2)<2r_2}}}\\
\le2\,\expec{\Big(\oscd{2r_2}Z(A)\Big)^2}\prm{\tfrac12r_2\le\rho(r_1)<r_2}.
\end{multline}
{\color{black}We turn to the second RHS term in~\eqref{eq:decomp-AG-2}.
Recalling that assumption~(c) ensures that given $\rho(r_1)\le r_2$ the random variables $\rho(r_1)$ and $\rho(r_2)$ are independent, we similarly obtain
\begin{eqnarray*}
\lefteqn{\expec{\Big(\oscd{r_2}Z(A)\Big)^2\mathds1_{\frac12r_2\le\rho(r_1)<r_2}\mathds1_{2^{\ell-1}r_2\le\rho(r_2)<2^{\ell}r_2}}}\\
&\le&\expeC{\Big(\oscd{2^\ell r_2}Z(A)\Big)^2\mathds1_{\rho(r_1)\ge\frac12 r_2}\mathds1_{\rho(r_2)\ge2^{\ell-1}r_2}}{\rho(r_1)<r_2,\,\rho(r_2)<2^\ell r_2}\\
&&\qquad\times\prm{\rho(r_1)<r_2,\,\rho(r_2)<2^\ell r_2}\\
&\le&\expec{\Big(\oscd{2^\ell r_2}Z(A)\Big)^2\mathds1_{2^{\ell-1}r_2\le \rho(r_2)<2^\ell r_2}}\frac{\pr{\frac12 r_2\le\rho(r_1)<r_2}}{\pr{\rho(r_1)<r_2,\,\rho(r_2)<2^\ell r_2}}\\
&\le&\expec{\Big(\oscd{2^\ell r_2}Z(A)\Big)^2\mathds1_{2^{\ell-1}r_2\le \rho(r_2)<2^{\ell}r_2}}\frac{\pr{\frac12 r_2\le\rho(r_1)<r_2}}{1-\pr{\rho(r_1)\ge r_2}-\pr{\rho(r_2)\ge 2^\ell r_2}}.
\end{eqnarray*}}
With the choice~\eqref{eq:choice-R2} of $R$, for $r_2\ge 2r_1\ge2R$ and $\ell\ge1$, this turns into
\begin{multline*}
{\expec{\Big(\oscd{r_2}Z(A)\Big)^2\mathds1_{\frac12r_2\le\rho(r_1)<r_2}\mathds1_{2^{\ell-1}r_2\le\rho(r_2)<2^{\ell}r_2}}}\\
\le2\,\expec{\Big(\oscd{2^\ell r_2}Z(A)\Big)^2\mathds1_{2^{\ell-1}r_2\le \rho(r_2)<2^{\ell}r_2}}\pr{\tfrac12 r_2\le\rho(r_1)<r_2}.
\end{multline*}
Combining this with~\eqref{eq:decomp-AG-2} and~\eqref{eq:bound-base}, the conclusion~\eqref{eq:AG-intermed} follows.

\medskip
\step{2} Proof of~\eqref{eq:AG-centered}.\\
Conditioning the LHS of~\eqref{eq:AG-centered} wrt the value of the action radius $\rho(0)$, we obtain
\[\expec{\big(Z(A)-Z(A(\X^x))\big)^2}\le\expec{\Big(\oscd{R}Z(A)\Big)^2}+\sum_{\ell=1}^\infty\expec{\Big(\oscd{2^{\ell}R}Z(A)\Big)^2\mathds1_{2^{\ell-1}R\le\rho(0)<2^\ell R}}.\]
We now iteratively apply~\eqref{eq:AG-intermed} to estimate the last RHS terms: with the short-hand notation $\pi(r_2;r_1):=\prm{\frac12 r_2\le \rho(r_1)<r_2}$, we obtain for all $n\ge1$,
\begin{multline*}
\expec{\big(Z(A)-Z(A(\X^x))\big)^2}\le\expec{\Big(\oscd{R}Z(A)\Big)^2}+2\sum_{\ell_1=1}^\infty\pi(2^{\ell_1} R;0)\,\expec{\Big(\oscd{2^{\ell_1+1}R}Z(A)\Big)^2}\\
+2^2\sum_{\ell_1=1}^\infty\pi(2^{\ell_1} R;0)\sum_{\ell_2=\ell_1+2}^\infty\pi(2^{\ell_2}R;2^{\ell_1}R)\,\expec{\Big(\oscd{2^{\ell_2+1}R}Z(A)\Big)^2}+\ldots\\
+2^n\sum_{\ell_1=1}^\infty\pi(2^{\ell_1} R;0)\sum_{\ell_2=\ell_1+2}^\infty\!\pi(2^{\ell_2}R;2^{\ell_1}R)\ldots\!\!\sum_{\ell_n=\ell_{n-1}+2}^\infty\!\pi(2^{\ell_n}R;2^{\ell_{n-1}}R)\,\expec{\Big(\oscd{2^{\ell_n+1}R}Z(A)\Big)^2}\\
+2^n\sum_{\ell_1=1}^\infty\pi(2^{\ell_1} R;0)\sum_{\ell_2=\ell_1+2}^\infty\pi(2^{\ell_2}R;2^{\ell_1}R)\ldots\!\!\sum_{\ell_n=\ell_{n-1}+2}^\infty\pi(2^{\ell_n}R;2^{\ell_{n-1}}R)\\
\times\sum_{\ell_{n+1}=\ell_n+2}^\infty\expec{\Big(\oscd{2^{\ell_{n+1}}R}Z(A)\Big)^2\mathds1_{2^{\ell_{n+1}-1}R\le\rho(2^{\ell_n}R)<2^{\ell_{n+1}} R}}.
\end{multline*}
With the choice~\eqref{eq:choice-R2} of $R$ in form of
\begin{align*}
\sup_{\ell_0\ge0}\sum_{\ell=\ell_0+2}^\infty\pi(2^\ell R;2^{\ell_0}R)=\sup_{\ell_0\ge0}\prm{\rho(2^{\ell_0}R)\ge2^{\ell_0+1}R}\le\tfrac14,
\end{align*}
setting $\tilde\pi(\ell):=\sup_{\ell_0:0\le\ell_0\le\frac14\ell}\pi(\ell;\ell_0)$, and recalling that the random variable $Z(A)$ is bounded, we deduce
\begin{multline*}
\expec{\big(Z(A)-Z(A(\X^x))\big)^2}\le\expec{\Big(\oscd{R}Z(A)\Big)^2}\\
+2\bigg(\sum_{m=0}^{n-1}2^{-m}\bigg)\sum_{\ell=1}^\infty\tilde\pi(2^{\ell} R)\,\expec{\Big(\oscd{2^{\ell+1}R}Z(A)\Big)^2}+2^{-n-2}\|Z\|_{\Ld^\infty}.
\end{multline*}
Letting $n\uparrow\infty$, we thus obtain
\begin{multline*}
\expec{\big(Z(A)-Z(A(\X^x))\big)^2}\le\expec{\Big(\oscd{R}Z(A)\Big)^2}+4\sum_{\ell=1}^\infty\tilde\pi(2^{\ell} R)\,\expec{\Big(\oscd{2^{\ell+1}R}Z(A)\Big)^2}.
\end{multline*}
{\color{black}Noting that by definition $\sup_{\frac12\ell\le r\le\ell}\tilde\pi(r)\le \pi_0(\ell)$, bounding sums by integrals, and using the definition of $\pi$, the conclusion~\eqref{eq:AG-centered} follows.}
\end{proof}


\section{Examples}\label{sec:examples}

In this section, we consider four main representative examples: Gaussian fields, random tessellations associated with a Poisson process, random parking bounded inclusions, and Poisson inclusions with unbounded random radii. The main results are summarized in the table below.

\bigskip

\begin{tabular}{|M{3.6cm}|M{5.3cm}|M{5cm}|}
\hline
{\bf Example of field} & {\bf Key property} & {\bf Functional inequalities} \\
\hline
Gaussian field & covariance function $\Cc$ $\sup_{B(x)}|\Cc|\le c(|x|)$ & ($\parfct{}$-MSG), ($\parfct{}$-MLSI) weight $\pi(\ell)\simeq (-c'(\ell))_+$\tabularnewline
\hline
Poisson tessellations & $\sigma(\X)$-measurable action radius & ($\osc$-MSG), ($\osc$-MLSI) weight $\pi(\ell)\simeq e^{-\frac1C\ell^d}$\\
\hline
Random parking bounded inclusions & $\sigma(\X)$-measurable action radius \& exponential stabilization & ($\osc$-MSG), ($\osc$-MLSI) weight $\pi(\ell)\simeq e^{-\frac1C\ell}$\\
\hline
Poisson inclusions \qquad\qquad with random radii & radius law $V$ $\gamma(\ell):=\pr{\ell\le V<\ell+1}$ & ($\osc$-MSG) \qquad\qquad\qquad weight $\pi(\ell)\simeq (\ell+1)^d\gamma(\ell)$ (and ($\osc$-LSI) if $V$ bounded)\\
\hline
\end{tabular}
\medskip

\subsection{Gaussian random fields}\label{chap:gaussian}

Gaussian random fields are the main examples of deterministically localized fields as introduced in Section~\ref{chap:condunif-pasunif}.
The following result is a multiscale weighted reformulation of the Malliavin-Poin\-car\'e inequality in \cite{Houdre-PerezAbreu-95} (see also the ``coarsened'' functional inequalities used in the first version of~\cite{GNO-reg} for Gaussian fields).
As shown in the companion article~\cite[Proposition~2.3]{DG1}, this result is sharp: each sufficient condition is (essentially) necessary.
The proof is postponed to Appendix~\ref{chap:condunif}.

\begin{theor}\label{cor:gaussiansg}
Let $A$ be a jointly measurable stationary Gaussian random field on $\R^d$ with covariance function $\Cc(x):=\cov{A(x)}{A(0)}$.
\begin{enumerate}[(i)]
\item If $x\mapsto \sup_{B(x)}|\Cc|$ is integrable, then $A$ satisfies \emph{($\parfct{}$-SG)} and \emph{($\parfct{}$-LSI)} with any radius $R>0$.
\item If $\sup_{B(x)}|\Cc| \le c(|x|)$ holds for some non-increasing Lipschitz function $c:\R_+\to\R_+$, then $A$ satisfies \emph{($\parfct{}$-MSG)} and \emph{($\parfct{}$-MLSI)} with weight $\pi(\ell)\simeq |c'(\ell)|$.
\item If $\F\Cc\in\Ld^1(\R^d)$ and if $\sup_{B(x)}|\F^{-1}(\sqrt{\F\Cc})|\le r(|x|)$ holds for some non-increasing Lipschitz function $r:\R_+\to\R_+$, then $A$ satisfies \emph{($\parfct{}$-MCI)} with weight $\pi(\ell)\simeq (\ell+1)^d\,r(\ell)|r'(\ell)|$.
\qedhere
\end{enumerate}
\end{theor}

\subsection{Poisson random tessellations}\label{chap:tessel}
In this section, we consider random fields that take i.i.d.\@ values on the cells of a tessellation associated with a stationary random point process $\Pc$ on $\R^d$. 
Such random fields can be formalized as projections of decorated random point processes. Given a point process $\Pc$ on $\R^d$ and given a random element $G$ with values in some measurable space $X$, we call {\it decorated random point process associated with $\Pc$ and $G$} a point process $\hat\Pc$ on $\R^d\times X$ defined as follows: choose a measurable enumeration $\Pc=\{P_j\}_j$,  pick independently a sequence $(G_j)_j$ of i.i.d.\@ copies of the random element $G$, and set $\hat\Pc:=\{P_j,G_j\}_j$
(that is, in measure notation, $\hat\Pc:=\sum_j\delta_{(P_j,G_j)}$).
By definition, $\hat\Pc$ is completely independent whenever $\Pc$ is.

\medskip

We focus here on the case when the underlying point process $\Pc$ is
some Poisson point process $\Pc=\Pc_0$ on $\R^d$ with intensity $\mu=1$. Choose a measurable random field $V$ on $\R^d$, corresponding to the values on the cells.
We study both Voronoi and Delaunay tessellations.
\begin{itemize}
\item{\it Voronoi tessellation:}
Let $\hat\Pc_1:=\{P_j,V_j\}_j$ denote a decorated point process associated with the random point process $\Pc_0:=\{P_j\}_j$ and the random element $V$ (hence $(V_j)_j$ is a sequence of i.i.d.\@ copies of the random field $V$).
We define a $\sigma(\hat\Pc_1)$-measurable random field $A_1$ as follows,
\[A_1(x)=\sum_jV_j(x)\mathds1_{C_j}(x),\]
where $\{C_j\}_j$ denotes the partition of $\R^d$ into the Voronoi cells associated with the Poisson points $\{P_j\}_j$, that is,
\[C_j:=\{x\in\R^d:|x-P_j|<|x-P_k|,~\forall k\ne j\}.\]
\item{\it Delaunay tessellation:}
Let $\tilde V:=(\tilde V_\zeta)_\zeta$ denote a family of i.i.d.\@ copies of the random element $V$, indexed by sets $\zeta$ of $d+1$ distinct integers. 
We define a random field $A_2$ as follows,
\[A_2(x)=\sum_j \tilde V_{\zeta(D_j)}(x)\mathds1_{D_j}(x),\]
where $\{D_j\}_j$ denotes the partition of $\R^d$ into the Delaunay $d$-simplices associated with the Poisson points $\{P_j\}_j$ (the Delaunay triangulation is indeed almost surely uniquely defined), and where $\zeta(D_j)$ denotes the set of the $d+1$ indices $i_1,\dots,i_{d+1}$ of the vertices $P_{i_1},\dots,P_{i_{d+1}}$ of $D_j$.
\end{itemize}

Since large holes in the Poisson process have exponentially small probability, large cells in the corresponding Voronoi or Delaunay tessellations also have exponentially small probability. This allows one to prove the following multiscale functional inequalities with stretched exponential weights.
\begin{prop}\label{prop:voronoi}
For $s=1,2$, the above-defined random field $A_s$ satisfies~\emph{($\osc$-MSG)}, \emph{($\osc$-MLSI)}, and \emph{($\osc$-MCI)}
with weight $\pi(\ell)=Ce^{-\frac1C\ell^d}$.
\end{prop}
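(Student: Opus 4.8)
The plan is to derive all three inequalities from Theorem~\ref{th:ar-rpm} applied to the decorated Poisson process. Fix $s\in\{1,2\}$ and set $\X:=\hat\Pc_s$, viewed as a completely independent structure over the partition $X_x=\{x\}\times X'$, $x\in\Z^d$, indexed by the unit cubes $Q(x)$: complete independence holds because a Poisson point process is independent over disjoint regions and the marks $(V_j)_j$ (resp.\@ $(\tilde V_\zeta)_\zeta$) are i.i.d. The transformation $A_s=A_s(\X)$ is translation covariant, so assumption~(b) is immediate. Everything then reduces to exhibiting a $\sigma(\X)$-measurable action radius $\rho_x^\ell$ satisfying~(a) and the measurability property~(c), together with a tail bound feeding the weight $\pi_0$ of the theorem.

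For the construction of the action radius, write $Q:=Q_{2\ell+1}(x)$ for the resampled region. In the Voronoi case the value $A_1(y)$ is determined by the nearest Poisson point to $y$ and its mark. The elementary shielding principle is: if $y$ satisfies $d(y,Q)\ge s$ and the ball $B_s(y)$ contains a Poisson point, then (since $B_s(y)$ cannot meet $Q$) that point lies outside $Q$, survives the resampling, and is strictly closer to $y$ than any point of $Q$; hence the nearest point to $y$, and thus $A_1(y)$, is unaffected by any resampling inside $Q$. Conversely, if $y$ \emph{is} affected then the ball of radius $d(y,Q)$ centered at $y$ is Poisson-free and tangent to $Q$. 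This identifies the natural choice $\rho_x^\ell:=\sup\{$radii of Poisson-empty balls tangent to $Q\}$, which is $\sigma(\X)$-measurable (as required for~\eqref{eq:AG-LSI}) and is a genuine action radius for property~(a): no $y$ with $d(y,Q)>\rho_x^\ell$ is affected. The Delaunay case $s=2$ is identical once one recalls that resampling inside $Q$ only creates or destroys simplices having a vertex in $Q$, whose (empty) circumscribing balls are tangent to $Q$ and hence again controlled by the same empty-ball scale.

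I expect the measurability property~(c) to be the main obstacle. One must check that $\{\rho_x^\ell>r\}\in\sigma\big(\X|_{X_x^{\ell+r}\setminus X_x^\ell}\big)$, i.e.\@ that a witnessing empty tangent ball of radius just above $r$ can be certified using only Poisson points in the annulus $Q_{2(\ell+r)+1}(x)\setminus Q$. The delicate point is that such a tangent ball extends to distance $\simeq 2r$ from $Q$, whereas the annulus in~(c) has width only $r$; resolving this requires a mild renormalization of the radius (for instance restricting to empty balls contained in the surrounding shell, or indexing by the distance to the far edge rather than the center), which affects only the constants in the final stretched-exponential weight. Once the radius is normalized so that its tail event is read off the matching annulus, assumption~(c) follows, and in particular the independence of $\rho_x^\ell$ and $\rho_x^{\ell+R}$ on $\{\rho_x^\ell\le R\}$ holds because these inspect disjoint annuli.

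It then remains to bound the tail. Since $\{\rho_x^\ell>r\}$ forces the existence of a Poisson-empty ball of radius $\gtrsim r$ tangent to $Q$, a union bound over the $\simeq\ell^{d-1}$ boundary positions together with the Poisson void probability (a set of volume $|U|$ is empty with probability $e^{-|U|}$, here $|U|\simeq r^d$) gives $\pr{\rho_x^\ell\ge r}\le C\,\ell^{d-1}e^{-\frac1C r^d}$, which is $\le C e^{-\frac1C r^d}$ whenever $r\gtrsim\ell$. This lets one pick $R$ satisfying~\eqref{eq:choice-R} and take $\pi_0(\ell)\simeq e^{-\frac1C\ell^d}$, so that the weight of Theorem~\ref{th:ar-rpm} is $\pi(\ell)\simeq(\ell+1)^d e^{-\frac1C\ell^d}\simeq e^{-\frac1{C'}\ell^d}$ after absorbing the polynomial prefactor. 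This yields~($\osc$-MSG) and~($\osc$-MLSI). Finally,~($\osc$-MCI) follows from Remark~\ref{rem:cov-alm}: the weight may be taken non-increasing and, being stretched exponential, satisfies $\int_0^\infty(\ell+1)^{-\frac d2}\pi(\ell)^{\frac12}d\ell<\infty$, so the covariance form of the action-radius estimate recasts into the canonical multiscale covariance inequality with the same weight $\pi(\ell)=Ce^{-\frac1C\ell^d}$.
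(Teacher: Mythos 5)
Your proposal is correct and follows essentially the same route as the paper's proof: an application of Theorem~\ref{th:ar-rpm} with an action radius built from the region shielded by nearby Poisson points, a factor-two renormalization of that radius to secure the measurability assumption~(c) (the paper's choice $\rho_x^\ell:=1+2\max_{v\in\partial G_{\Pc_0,\ell}(x)}d(v,Q_{2\ell+1}(x))$ is precisely your ``distance to the far edge'' fix, verified there via the compound-point-set identity~\eqref{eq:ident-cruc}), and a Poisson void-probability tail bound giving $\pi_0(\ell)\simeq e^{-\frac1C\ell^d}$, with ($\osc$-MCI) recovered through Remark~\ref{rem:cov-alm}. The only cosmetic difference is the tail estimate, which the paper derives from a cone construction combined with a scaling argument, while you use a union bound over empty tangent balls; both yield the same stretched-exponential weight.
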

\begin{proof}
We focus on the case of the Voronoi tessellation (the argument for the Delaunay tessellation is similar).
{\color{black}While Theorem~\ref{th:ar} does not apply to this setting (the independence assumption~(c) is not satisfied), we may appeal to Theorem~\ref{th:ar-rpm}. We need to construct and control action radii, which we do in two separate steps.}

\medskip
\step1 Definition and properties of the action radius.

\noindent
Let $x\in\R^d$, $\ell \in \N$ be fixed. Changing the point configuration of $\hat\Pc_1=\{P_j,V_j\}_j$ inside $Q_{2\ell+1}(x)\times\R^{\R^d}$ only modifies the Voronoi tessellation (hence the field $A_1$) inside the set
\begin{multline*}
G_{\Pc_0,\ell}(x)\,:=\,\big\{y\in\R^d:\exists z\in Q_{2\ell+1}(x)\\\text{ such that $|y-z|\le|y-X|$ for all $X\in \Pc_0\setminus  Q_{2\ell+1}(x)$}\big\}.
\end{multline*}
{\color{black}Note that $G_{\Pc_0,\ell}(x)$ is a simply connected closed set and contains $Q_{2\ell+1}(x)$.
An action radius for $A_1$ wrt $\hat\Pc_1$ on $Q_{2\ell+1}(x)\times\R^{\R^d}$ is then given for instance by
\[\inf\{\rho>0:Q_{2\ell+1}(x)+B_\rho\supset G_{\Pc_0,\ell}(x)\}=\max_{v\in \partial G_{\Pc_0,\ell}(x)}d(v,Q_{2\ell+1}(x)),\]
but in view of the measurability property~(c) we rather make the following weaker choice,
\[\rho_{x}^{\ell}:= 1+2\max_{v\in \partial G_{\Pc_0,\ell}(x)}d(v,Q_{2\ell+1}(x)).\]}
Property~(a) of Theorem~\ref{th:ar-rpm} is then proved, and the stationarity property~(b) follows by construction.

\smallskip\noindent
{\color{black}Next, we establish the measurability property~(c) of Theorem~\ref{th:ar-rpm}, that is, we prove that $\rho_x^\ell$ is $\sigma(\Pc_0|_{Q_{2(\ell+\rho_x^\ell)+1}(x)\setminus Q_{2\ell+1}(x)})$-measurable.
Since $\rho_x^\ell$ is $\sigma(\Pc_0|_{\R^d\setminus Q_{2\ell+1}(x)})$-measurable by construction, it remains to prove it is $\sigma(\Pc_0|_{Q_{2(\ell+\rho_x^\ell)+1}(x)})$-measurable.
To this aim, let $\tilde \Pc$ be an arbitrary locally finite point set and consider the compound point set
\[\tilde \Pc_{0,\ell}(x)\:=\,\Pc_0|_{Q_{2(\ell+\rho_x^\ell)+1}(x)}   \cup \tilde \Pc|_{\R^d  \setminus Q_{2(\ell+\rho_x^\ell)+1}(x)}.\]
The claimed measurability then follows from the identity
\begin{equation}\label{eq:ident-cruc}
G_{\tilde \Pc_{0,\ell}(x),\ell}(x)=G_{\Pc_0,\ell}(x),
\end{equation}
as this indeed implies that for all $r>0$ the event $\{\rho_x^\ell<r\}$ coincides with
\[\{2\diam G_{\Pc_0\cap Q_{2(\ell+r)+1}(x)}+1-\ell<r\}\in\sigma(\Pc_0|_{Q_{2(\ell+r)+1}(x)}).\]
It remains to establish~\eqref{eq:ident-cruc}.
Consider  $y\in (G_{\Pc_0,\ell}(x)+\frac14B)\setminus G_{\Pc_0,\ell}(x)$ (the $\frac14$-fattened boundary of $G_{\Pc_0,\ell}(x)$). Since $y \notin G_{\Pc_0,\ell}(x)$, there exists $X \in \Pc_0\setminus Q_{2\ell+1}(x)$ such that $|y-X|<|y-z|$ holds for all $z\in Q_{2\ell+1}(x)$. The triangle inequality then yields
\begin{multline*}
|X-x|_\infty\le |X-y|+|y-x|_\infty<d(y,Q_{2\ell+1}(x))+|y-x|_\infty\\
\le \ell+\tfrac12+2d(y,Q_{2\ell+1}(x))
\le \ell+1+2\max_{v\in \partial G_{\Pc_0,\ell}(x)}d(v,Q_{2\ell+1}(x))=\ell+\rho_x^\ell,
\end{multline*}
that is, $X\in Q_{2(\ell+\rho_x^\ell)+1}(x)$, hence $X\in\tilde\Pc_{0,\ell}(x)$, which in turn implies $y\notin G_{\tilde\Pc_{0,\ell}(x),\ell}(x)$.
This proves the inclusion $\partial G_{\Pc_0,\ell}(x)\subset\R^d\setminus G_{\tilde\Pc_{0,\ell}(x),\ell}(x)$. Conversely, the same argument yields $\partial G_{\tilde\Pc_{0,\ell}(x),\ell}(x)\subset\R^d\setminus G_{\Pc_{0},\ell}(x)$. Since $G_{\Pc_{0},\ell}(x)$ and $G_{\tilde\Pc_{0,\ell}(x),\ell}(x)$ are simply connected closed sets, the identity~\eqref{eq:ident-cruc} follows, thus proving the measurability property~(c).}

\medskip
\step2 Control of the weight.

\noindent
In view of Step~1, we may apply Theorem~\ref{th:ar-rpm} and it remains to estimate the weights.
By scaling, it is enough to consider $\ell=0$ (we omit the subscripts~$\ell$ in the notation) and a Poisson point process $\Pc_0^\mu$ of general intensity $\mu>0$.
Denote by $\mathcal C_i=\{x\in \R^d:x_i\ge \frac56 |x|\}$ the $d$ cones in the canonical directions $e_i$ of $\R^d$, and consider the $2d$ cones $\mathcal C_i^\pm:=\pm (2e_i+C_i)$.
By an elementary geometric argument, for some constant $C\simeq1$ the following implication holds: for all $L>C$,
$$
\sharp \big(\Pc_0^\mu \cap \mathcal C_i^\pm \cap \{x:C\le |x_i|\le 2L\}\big)>0 \text{ for all }i \text{ and }\pm \quad \implies \quad  \diam G_{\Pc_0^\mu}(0)\le CL.  
$$
{\color{black}A union bound then yields for all $L>C^2$,
\begin{eqnarray*}
\pr{ \diam G_{\Pc_0^\mu}(0) \ge L} &\le&\pr{\exists i, \pm: \sharp \big(\Pc_0^\mu \cap \mathcal C_i^\pm \cap \{x:C\le |x_i|\le\tfrac 2C L\}\big)=0}\\
&\le& 2d\,e^{-\mu (\frac LC)^d}.
\end{eqnarray*}
By scaling, as the intensity of the Poisson process scales like the volume, and recalling that~$\Pc_0$ is chosen with unit intensity, we deduce for all $\ell\ge0$ and $L>C^2$,
\[\pr{ \diam G_{\Pc_0,\ell}(0) \ge L}\le2d\,e^{-(\frac LC)^d}.\]
Noting that the definition of the action radius in Step~1 yields
\[\rho_0^{\ell}:= 1+2\max_{v\in \partial G_{\Pc_0,\ell}(x)}d(v,Q_{2\ell+1}(x))\le2\diam G_{\Pc_0,\ell}(0)-4\ell,\]
we deduce $\pr{\rho_0^\ell\ge L}\le \pr{2\diam G_{\Pc_0,\ell}(0)\ge L+4\ell}\le Ce^{-\frac1C(L+\ell)^d}$ for all $\ell,L\ge0$, and the conclusion follows.}
\end{proof}

\subsection{Random parking process}\label{chap:RPM}
{\color{black}In this section we let $\Pc$ denote the random parking point process on $\R^d$ with hardcore radius $R>0$.}
As shown by Penrose~\cite{Penrose-01} (see also~\cite[Section~2.1]{Gloria-Penrose-13}), it can be constructed as a transformation $\Pc=\Phi(\Pc_0)$ of a Poisson point process $\Pc_0$ on $\R^d\times\R_+$ with intensity~$1$. Let us recall the graphical construction of this transformation $\Phi$. We first construct an oriented graph on the points of $\Pc_0$ in $\R^d\times\R_+$, by putting an oriented edge from $(x,t)$ to $(x',t')$ whenever $B(x,R)\cap B(x',R)\ne\varnothing$ and $t<t'$ (or $t=t'$ and $x$ precedes $x'$ in the lexicographic order, say). We say that $(x',t')$ is an offspring (resp.\@ a descendant) of $(x,t)$, if $(x,t)$ is a direct ancestor (resp.\@ an ancestor) of $(x',t')$, that is, if there is an edge (resp.\@ a directed path) from $(x,t)$ to $(x',t')$. The set $\Pc:=\Phi(\Pc_0)$ is then constructed as follows. Let $F_1$ be the set of all roots in the oriented graph (that is, the points of $\Pc_0$ without ancestor), let $G_1$ be the set of points of $\Pc_0$ that are offsprings of points of $F_1$, and let $H_1:=F_1\cup G_1$. Now consider the oriented graph induced on $\Pc_0\setminus H_1$, and define $F_2,G_2,H_2$ in the same way, and so on. By construction, the sets $(F_j)_j$ and $(G_j)_j$ are all disjoint and constitute a partition of $\Pc_0$. We finally define $\Pc:=\Phi(\Pc_0):=\bigcup_{j=1}^\infty F_j$.

\medskip
{\color{black}In this setting, in view of the exponential stabilization results of~\cite{Schreiber-Penrose-Yukich-07}, we show that there exists an action radius with exponential moments for $\Pc$ wrt $\Pc_0$, leading to the following multiscale functional inequalities with exponential weights.

\begin{prop}\label{prop:rpm}
The above-defined random parking point process $\Pc$ with hardcore radius $R=1$ satisfies
\emph{($\osc$-MSG)}, \emph{($\osc$-MLSI)}, and~\emph{($\osc$-MCI)}
with weight $\pi(\ell)=Ce^{-\frac1{C}\ell}$.
\end{prop}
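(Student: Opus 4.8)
The plan is to apply Theorem~\ref{th:ar-rpm} with the hidden product structure $\X=\Pc_0$, the unit-intensity Poisson process on $\R^d\times\R_+$, partitioned into the vertical slabs $X_x:=Q^d(x)\times\R_+$ over the unit cubes, so that $X_x^\ell=Q_{2\ell+1}(x)\times\R_+$ and resampling on $X_x^\ell$ amounts to resampling the Poisson points lying above $Q_{2\ell+1}(x)$. Everything then reduces to constructing an action radius $\rho_x^\ell$ for $\Pc=\Phi(\Pc_0)$ satisfying assumptions~(a)--(c) with exponential tails. As in the Voronoi case (Proposition~\ref{prop:voronoi}), I would split the argument into two steps: the definition, validity, and measurability of the action radius, and then the tail estimate controlling the weight.

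For the definition, I would read off the action radius from the causal structure of Penrose's graphical construction. Resampling $\Pc_0$ above $Q_{2\ell+1}(x)$ can alter the acceptance status of a parked point $(y,s)$ only if its ancestor clan---the set of points reachable by following oriented edges backward---meets the slab $Q_{2\ell+1}(x)\times\R_+$; dually, $\Pc$ is unchanged at all spatial locations not covered by the affected clans. I would therefore take $\rho_x^\ell$ to be a suitable dilation of the maximal spatial reach of the ancestor/descendant clans affected by the resampling, so that property~(a) holds by construction and stationarity~(b) follows from the translation invariance of the graphical construction.

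The main obstacle is the measurability property~(c): one must show that $\{\rho_x^\ell>r\}$ is determined by the Poisson points in the annular slab $(Q_{2(\ell+r)+1}(x)\setminus Q_{2\ell+1}(x))\times\R_+$ alone, \emph{independently} of the points inside the resampled region. This is delicate since ancestor clans may a priori traverse both the interior and the annulus. I would rely on the shielding mechanism underlying exponential stabilization: the existence, with high probability, of a barrier of early-accepted balls surrounding $Q_{2\ell+1}(x)$ that decouples interior from exterior, so that the reach of the affected clans---and hence $\rho_x^\ell$---can be certified from annular data only. The argument here is in the spirit of the geometric identity~\eqref{eq:ident-cruc} used for the Voronoi tessellation, but the combinatorial causal structure of the parking process makes it substantially more involved; this is the heart of the proof and where the notion of action radius must be tuned carefully.

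Finally, for the tails I would invoke the exponential stabilization of the random parking process established by Schreiber, Penrose, and Yukich~\cite{Schreiber-Penrose-Yukich-07} (building on the stabilization radius of~\cite{Penrose-Yukich-02}), which gives $\pr{\rho_x^\ell\ge L}\le Ce^{-L/C}$ uniformly in $\ell$. This allows the threshold in~\eqref{eq:choice-R} to be taken of order one and the function $\pi_0(\ell)\simeq e^{-\ell/C}$ to majorize the relevant probabilities; inserting this into the weight formula of Theorem~\ref{th:ar-rpm} gives $\pi(\ell)\simeq(\ell+1)^d\ell^{-1}e^{-\ell/(2C)}\simeq e^{-\ell/C'}$, hence \emph{($\osc$-MSG)} and \emph{($\osc$-MLSI)} via~\eqref{eq:AG-SG}--\eqref{eq:AG-LSI}. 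Since this weight has exponential, hence super-algebraic, decay, the multiscale covariance inequality \emph{($\osc$-MCI)} follows from Remark~\ref{rem:cov-alm}.
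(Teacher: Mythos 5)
Your proposal follows the same overall route as the paper: reduce to Theorem~\ref{th:ar-rpm} (after noting that Theorem~\ref{th:ar} does not apply), build an action radius from the causal structure of Penrose's graphical construction, get exponential tails from the stabilization estimates of Schreiber--Penrose--Yukich, and deduce ($\osc$-MCI) from the covariance version of the argument (Remark~\ref{rem:cov-alm}) thanks to the superalgebraic decay of the weight. However, the step you yourself single out as the heart of the proof is genuinely missing, and the route you sketch for it would fail. You define $\rho_x^\ell$ through the maximal reach of the ancestor/descendant clans \emph{affected by the resampling}. These clans are functions of the configuration of $\Pc_0$ \emph{inside} $Q_{2\ell+1}(x)\times\R_+$ (and of the resampled copy $\Pc_0'$), so the resulting $\rho_x^\ell$ is not measurable with respect to $\Pc_0$ restricted to the annular region, and assumption~(c) of Theorem~\ref{th:ar-rpm} fails for it. Your proposed repair --- a shielding barrier of early-accepted balls that exists ``with high probability'' --- cannot restore it: assumption~(c) is an almost-sure measurability requirement on $\rho_x^\ell$ (for every $r$, the event $\{\rho_x^\ell>r\}$ must lie in $\sigma\big(\Pc_0|_{(Q_{2(\ell+r)+1}(x)\setminus Q_{2\ell+1}(x))\times\R_+}\big)$), and it is used in the proof of Theorem~\ref{th:ar-rpm} to obtain \emph{exact} conditional independence of successive radii; an event of merely high probability would force a reproof of that theorem under approximate independence, which is not what its statement provides.

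The paper resolves this differently, and more simply: it defines $\rho_x^\ell$ as the supremum of $2+d(y,Q_{2\ell+1}(x))$ over all points $(y,s)\in\Pc_0$ reachable by a causal chain \emph{started from a point of $\Pc_0$ in the annulus} $((Q_{2\ell+1}(x)+B_2)\setminus Q_{2\ell+1}(x))\times\R_+$, i.e., it tracks every chain emanating from the annulus, whether or not the resampling actually activates it. This deliberate overestimate is measurable with respect to the exterior configuration by construction, so (c) holds trivially; property (a) then follows from a propagation argument: if the acceptance status of a point outside the cube differs after resampling, one can extract a chain of points of differing status, all of which except the first lie outside the cube and are hence common to both configurations, and the first common point of which lies within distance $2$ of the cube, so the change is already recorded by a chain that the exterior-measurable radius tracks. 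A last, minor point: the tail bound is not uniform in $\ell$ as you claim; the union bound over coverings of the two annuli gives $\prm{\rho_x^\ell>L}\le C(\ell+1)^{2(d-1)}e^{-L/C}$, and the polynomial prefactor is then absorbed into the exponential when forming the weight $\pi$, exactly as you do with the $(\ell+1)^d\ell^{-1}$ factor.
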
}
\begin{proof}
{\color{black}
The independence assumption~(c) of Theorem~\ref{th:ar} is not satisfied and we rather appeal to Theorem~\ref{th:ar-rpm}.
In order to construct action radii, we rely on the notion of {\it causal chains} defined in the proof of~\cite[Lemma~3.5]{Schreiber-Penrose-Yukich-07}, to which we refer the reader.
Note that for all consecutive points $(x,t)$ and $(y,s)$ in a causal chain we necessarily have $|x-y|<2$ and $t<s$.
By definition, an action radius for $\Pc$ wrt $\Pc_0$ on $Q_{2\ell+1}(x)\times\R_+$ can be defined as the supremum of the distances $2+d(y,Q_{2\ell+1}(x))$ on the set of points $(y,s)\in\Pc_0$ such that there exists a causal chain from a point of $\Pc_0$ in $((Q_{2\ell+1}(x)+B_{2})\setminus Q_{2\ell+1}(x))\times\R_+$ towards $(y,s)$. We denote by $\rho_x^\ell$ this maximum. By construction, we note that this random variable $\rho_x^\ell$ is $\sigma\big(\Pc_0|_{((Q_{2\ell+1}(x)+B_{\rho_x^\ell})\setminus Q_{2\ell+1}(x))\times\R_+}\big)$-measurable.

\smallskip\noindent
It remains to estimate the decay of its probability law.
First, note that by definition the event $\rho_x^\ell>L$ entails the existence of some $(y,s)\in \Pc_0$ with $y\in(Q_{2\ell+1}(x)+B_{L+2})\setminus(Q_{2\ell+1}(x)+B_{L})$ and of a causal chain from a point of $\Pc_0$ in $((Q_{2\ell+1}(x)+B_{2})\setminus Q_{2\ell+1}(x))\times\R_+$ towards $(y,s)$.
Second, the exponential stabilization result of~\cite[Lemma~3.5]{Schreiber-Penrose-Yukich-07} states that for all $z\in\R^d$ and all $L>0$ the probability that there exists $(y,s)\in Q(z)\times\R_+$ and a causal chain from $(y,s)$ towards a point outside $(Q(z)+B_{L})\times\R_+$ is bounded by $Ce^{-\frac1{C}L}$.
For $L\ge R$, covering $(Q_{2\ell+1}(x)+B_{L+2})\setminus(Q_{2\ell+1}(x)+B_{L})$ with $C(L+\ell+1)^{d-1}$ unit cubes and covering $(Q_{2\ell+1}(x)+B_{2})\setminus Q_{2\ell+1}(x)$ with $C(\ell+1)^{d-1}$ unit cubes,
a union bound then yields
\begin{equation}\label{eq:bound-rhoxl-rpm}
\prm{\rho_x^\ell>L}\le C(L+\ell+1)^{d-1}(\ell+1)^{d-1}e^{-\frac1{C}L}\le C(\ell+1)^{2(d-1)}e^{-\frac1{C}L}.
\end{equation}
All the assumptions of Theorem~\ref{th:ar-rpm} are then satisfied with $\pi(\ell)=Ce^{-\frac1{C}\ell}$, and the conclusion follows.}
\end{proof}

{\color{black}
\begin{rem}\label{lem:general-hardcore/decimated}
We conclude this section with a remark on the following two extensions: we analyze the dependence on a general hardcore parameter $R>0$, and we consider Bernoulli modifications to generate a hardcore point process with arbitrary intensity.
\begin{enumerate}[(a)]
\item Let $\Pc=\{P_j\}_j$ be a random point process on $\R^d$ that satisfies {($\osc$-MSG)}, {($\osc$-MLSI)}, and~{($\osc$-MCI)} with weight $\pi$.
Then for all $R>0$, the dilated process $\Pc_R:=\{RP_j\}_j$ satisfies~{($\osc$-MSG)}, {($\osc$-MLSI)}, and~{($\osc$-MCI)} with weight 
$$
\pi_R(\ell):= R^{-1} \big(\tfrac{\ell+1} {\ell+ R}\big)^{d}\pi(\tfrac \ell R).
$$
In addition, if  $\Pc$  is hardcore with parameter 1, then $\Pc_R$ is hardcore with parameter~$R$.
(If $\Pc$ is the random parking point process with hardcore radius $1$, then the dilated process $\Pc_R$ coincides in law with the random parking point process with radius~$R$.)
Denoting by $D_R$ the dilation by $R$, and by $\varR{\cdot}$ and $\expecR{\cdot}$ the variance and expectation with respect to $\Pc_R$, the claim simply follows from a change of variables,
\begin{eqnarray*}
\qquad\varR{Z}&=&\var{Z\circ D_R}\\
&\le& \expec{\int_0^\infty\int_{\R^d} \Big(\oscd{\Pc,B_\ell(x)}Z\circ D_R\Big)^2dx\,(\ell+1)^{-d} \pi(\ell)\,d\ell}
\\
&=&\expecR{\int_0^\infty\int_{\R^d} \Big(\oscd{\Pc_R,B_{R\ell}(Rx)}Z\Big)^2dx\, (\ell+1)^{-d} \pi(\ell)\,d\ell}
\\
&=&R^{-d-1}\,\expecR{\int_0^\infty\int_{\R^d} \Big(\oscd{\Pc_R,B_{\ell}(x)}Z\Big)^2dx\, (\tfrac\ell R+1)^{-d} \pi(\tfrac\ell R)\,d\ell}.
\end{eqnarray*}

\item A simple way to modify the intensity of the random parking point process $\Pc=\{P_j\}_j$ consists in defining for $0\le\lambda \le1$ the corresponding $\lambda$-decimated process
\[\qquad\Pc^\lambda:=\{P_j\in\Pc:b_j=1\},\]
where $\{b_j\}_j$ is an i.i.d.\@ sequence of Bernoulli random variables with $\pr{b_j=1}=\lambda$, independent of $\Pc$.
Alternatively, since the hardcore condition ensures that points of $\Pc$ are always at distance $>2$ from one another,
we can rather describe the law of $\Pc^\lambda$ via
\[\qquad\Pc^\lambda=\Big\{P_j:\exists z\in\tfrac2{\sqrt d}\Z^d,\, P_j \in\Pc \cap Q_{\frac2{\sqrt{d}}}(z)\text{ and }b_z=1\Big\},\]
where $\{b_z\}_z$ is an i.i.d.\@ sequence of Bernoulli random variables with $\pr{b_z=1}=\lambda$.
This point process $\Pc^\lambda$ is again stationary and ergodic.
Denoting by $\rho_x^\ell$ an action radius for the random parking point process $\Pc$ wrt $\Pc_0$ on $Q_{2\ell+1}(x)\times\R_+$,
an action radius for $\Pc^\lambda$ wrt $\Pc_0\times\{b_z\}_z$ on $Q_{2\ell+1}(x)\times\R_+$ is given by
$$
\qquad\rho_{\lambda,x}^\ell := \sup\Big\{0\le r\le \rho_x^\ell \,:\, \exists z\in \tfrac2{\sqrt d} \Z^d,\,b_z=1\text{  and  }Q_{\frac2{\sqrt{d}}}(z)\cap\partial(Q_{2\ell+1}(x)+B_r)\ne\varnothing\Big\}
$$
In this case, in view of~\eqref{eq:bound-rhoxl-rpm},
\[\prm{\rho_{\lambda,x}^\ell>L}\,\le\,C\lambda(L+\ell+1)^{d-1}\prm{\rho_{x}^\ell>L}\,\le\,C\lambda(\ell+1)^{3(d-1)}e^{-\frac1CL},\]
hence by Theorem~\ref{th:ar-rpm} the decimated process $\Pc^\lambda$ satisfies {($\osc$-MSG)}, {($\osc$-MLSI)}, and~{($\osc$-MCI)} with weight $\pi_\lambda(\ell)=C\lambda e^{-\frac1{C}\ell}$, that is, a prefactor $\lambda$ is gained.
\qedhere
\end{enumerate}
\end{rem}}

\subsection{Hardcore Poisson process}\label{sec:hardcore}
{\color{black}
In this section we consider the hardcore Poisson point process $\Pc$ on $\R^d$ with parameters $R,\lambda$, which we define via Penrose's graphical construction $\Pc=\Pc(\Pc_0)$ recalled in Section~\ref{chap:RPM} with hardcore radius $R$ and starting from a Poisson point process $\Pc_0$ of intensity $\lambda$ on $\R^d\times [0,1]$ (instead of a Poisson process on the whole of $\R^d\times\R^+$ as for the random parking process).
The so-defined point process $\Pc$ is stationary, ergodic, and has intensity $\lambda(1+O(\lambda R^d))$. Points of $\Pc$ are always at distance $>2R$ from each other as for the random parking process, but it is not jammed in the sense that arbitrarily large empty spaces still appear as for the Poisson process.
In this setting, we establish the following multiscale functional inequalities with Poisson weights.

\begin{prop}\label{prop:SG0}
Provided that $\lambda R^d\le1$,
the above-defined hardcore Poisson process~$\Pc$ with parameters $R,\lambda$ satisfies \emph{($\osc$-MSG)}, \emph{($\osc$-MLSI)}, and \emph{($\osc$-MCI)} with weight $\pi(\ell)=C\lambda R^{-1}(R+1)^{d}  e^{-\frac{\ell}{CR}\log\frac\ell{CR}}$.
\end{prop}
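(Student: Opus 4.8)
The plan is to follow the proof of Proposition~\ref{prop:rpm} almost verbatim and to invoke Theorem~\ref{th:ar-rpm}; the only genuinely new ingredient is the tail estimate of the action radius, where the exponential stabilization of~\cite{Schreiber-Penrose-Yukich-07} (valid on the time half-line) must be replaced by a direct Poisson computation exploiting that the auxiliary process now lives on the \emph{bounded} time slab $\R^d\times[0,1]$. I would first reduce to hardcore radius~$1$ by scaling. Since Penrose's graphical construction $\Pc=\Pc(\Pc_0)$ depends on $\Pc_0$ only through the spatial overlaps $B(x,R)\cap B(x',R)\ne\varnothing$ and the time ordering, it is equivariant under spatial dilation: dilating by $R$ turns hardcore radius~$1$ into hardcore radius~$R$ and an intensity-$(\lambda R^d)$ Poisson process on $\R^d\times[0,1]$ into an intensity-$\lambda$ one. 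Hence the process with parameters $R,\lambda$ coincides in law with the $R$-dilation of the process with parameters $1,\lambda R^d$, and the constraint $\lambda R^d\le1$ is exactly the admissibility of the latter. It therefore suffices to prove the result for hardcore radius~$1$ and arbitrary intensity $\lambda_0\le1$, with weight $\pi_1(\ell)=C\lambda_0\,e^{-\frac\ell C\log\frac\ell C}$, and then to transport it via the change of variables of Remark~\ref{lem:general-hardcore/decimated}(a), applied with $\lambda_0=\lambda R^d$: the resulting weight $R^{-1}(\tfrac{\ell+1}{\ell+R})^d\pi_1(\ell/R)=C\lambda R^{d-1}(\tfrac{\ell+1}{\ell+R})^d e^{-\frac{\ell}{CR}\log\frac{\ell}{CR}}$ is bounded by $C\lambda R^{-1}(R+1)^d e^{-\frac{\ell}{CR}\log\frac{\ell}{CR}}$, using that $R^{d-1}(\tfrac{\ell+1}{\ell+R})^d=R^{-1}(\tfrac{R(\ell+1)}{\ell+R})^d\le R^{-1}(R+1)^d$ for all $R>0$, $\ell\ge0$.

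For hardcore radius~$1$ I would build the action radius exactly as in Proposition~\ref{prop:rpm}, relying on the causal chains of~\cite[Lemma~3.5]{Schreiber-Penrose-Yukich-07}: an action radius $\rho_x^\ell$ for $\Pc$ wrt $\Pc_0$ on $Q_{2\ell+1}(x)\times[0,1]$ is the supremum of $2+d(y,Q_{2\ell+1}(x))$ over the points $(y,s)\in\Pc_0$ reachable by a causal chain issued from $\Pc_0$ in $\big((Q_{2\ell+1}(x)+B_2)\setminus Q_{2\ell+1}(x)\big)\times[0,1]$. As in Proposition~\ref{prop:rpm}, this $\rho_x^\ell$ is $\sigma\big(\Pc_0|_{((Q_{2\ell+1}(x)+B_{\rho_x^\ell})\setminus Q_{2\ell+1}(x))\times[0,1]}\big)$-measurable, so that assumptions~(a), (b), (c) of Theorem~\ref{th:ar-rpm} carry over unchanged; the whole difference with the random parking case is the \emph{law} of $\rho_x^\ell$.

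The heart of the proof is the tail bound on $\rho_x^\ell$. Since consecutive points of a causal chain lie within spatial distance $2$ and carry strictly increasing times in $[0,1]$, a chain reaching spatial distance $L$ has at least $L/2$ points. By the multivariate Mecke formula, the expected number of causal chains of length $k$ issued from the annular region $D:=(Q_{2\ell+1}(x)+B_2)\setminus Q_{2\ell+1}(x)$ factorizes into a spatial part $\lambda_0|D|(\lambda_0|B_2|)^{k-1}$ and a time part $\int_{0\le t_1<\dots<t_k\le1}dt=\tfrac1{k!}$, giving $\lambda_0|D|(\lambda_0|B_2|)^{k-1}/k!$. The decisive gain is the factor $\tfrac1{k!}$, produced \emph{solely} by the time-ordering constraint on the bounded slab $[0,1]$; on the time half-line it would be absent. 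Writing $\mu:=\lambda_0|B_2|\le C_0$ (bounded precisely because $\lambda_0\le1$, which is where $\lambda R^d\le1$ enters) and summing over $k\ge L/2$, Markov's inequality together with the Poisson/Chernoff tail $\sum_{k\ge m}\mu^k/k!\le 2(e\mu/m)^m$ yields, for $L$ large, $\pr{\rho_x^\ell>L}\le C\lambda_0(\ell+1)^{d-1}(2e\mu/L)^{L/2}\le C\lambda_0(\ell+1)^{d-1}e^{-\frac{L}{C}\log\frac{L}{C}}$. I expect this Mecke-plus-Stirling step to be the main obstacle: one must extract the precise $e^{-\frac{\ell}{C}\log\frac{\ell}{C}}$ rate and then verify that the polynomial prefactor $(\ell+1)^{d-1}$ — together with the further polynomial losses incurred when feeding this into the definitions of $\pi_0$ and $\pi$ in Theorem~\ref{th:ar-rpm} — is absorbed into the super-exponential decay by adjusting $C$, while the genuinely linear factor $\lambda_0$ is retained. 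Plugging the resulting majorant $\pi_0(\ell)\simeq C\lambda_0\,\ell^{d-1}e^{-\frac{\ell}{C}\log\frac{\ell}{C}}$ into Theorem~\ref{th:ar-rpm} gives $\pi_1(\ell)=C\lambda_0\,e^{-\frac{\ell}{C}\log\frac{\ell}{C}}$, and the scaling reduction of the first paragraph then completes the proof.
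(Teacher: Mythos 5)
Your proof is correct and follows essentially the same route as the paper: reduction to hardcore radius $1$ by dilation, the causal-chain action radius on the slab $\R^d\times[0,1]$ verifying hypotheses (a)--(c) of Theorem~\ref{th:ar-rpm}, a super-exponential tail whose source is the time-ordering constraint on $[0,1]$, and the same weight bookkeeping. The two deviations are executional. For the tail bound, the paper does not redo a computation but invokes the argument of~\cite[proof of~(0.2) in Lemma~4.2]{Penrose-Yukich-02}: for all $\theta>0$ the probability of a causal chain joining $Q(x)\times[0,1]$ to $Q(y)\times[0,1]$ is at most $e^\theta\big(\tfrac{3^dC}{C+\theta}\big)^{|x-y|}$, which after optimization in $\theta$ gives $Ce^{-|x-y|\log\frac{|x-y|}{C}}$; your Mecke-plus-Stirling first-moment bound is a self-contained derivation of the same estimate, resting on the same $1/k!$ gain from the bounded time slab, so nothing is lost there. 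For the intensity prefactor, the paper instead regards the radius-$1$, intensity-$\lambda R^d$ process as the $\lambda R^d$-decimation (thinning) of the unit-intensity one and invokes Remark~\ref{lem:general-hardcore/decimated}(b), thereby reducing to $R=\lambda=1$, whereas you carry $\lambda_0=\lambda R^d$ directly through the Mecke computation. Both routes share the caveat you yourself flag: Theorem~\ref{th:ar-rpm} as stated sets $\pi(\ell)=(\ell+1)^d$ for $\ell\le 4R$ with no intensity factor, so retaining the prefactor uniformly in $\ell$ requires inspecting the proof of that theorem rather than quoting its statement --- exactly the step the paper leaves implicit in Remark~\ref{lem:general-hardcore/decimated}(b), so your argument is at the same level of rigor as the paper's, with a citation replaced by an elementary computation.
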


%

\begin{proof}
By Remark~\ref{lem:general-hardcore/decimated}(a), it suffices to argue for hardcore radius $R=1$. By this rescaling, the Poisson point process on $\R^d \times [0,1]$ in the graphical construction now has intensity $\lambda R^d \le1$, and can be seen as the $\lambda R^d$-decimation of a Poisson point process with unit intensity, as in Remark~\ref{lem:general-hardcore/decimated}(b).
It is thus enough to treat the case $R=\lambda=1$. The proof is again an application of Theorem~\ref{th:ar-rpm}.
We start with the construction of an action radius $\rho_{x}^\ell$ for $\Pc$ wrt $\Pc_0$ on $Q_{2\ell+1}(x)\times[0,1]$ for all $x,\ell$.
We define causal chains as sequences $\{(y_j,s_j)\}_{j=1}^n$ such that $|y_j-y_{j+1}|<2$ and $s_j<s_{j+1}$.
The action radius $\rho_{x}^\ell$ can then be chosen as the maximum of the distances $2+d(y,Q_{2\ell+1}(x))$ on the set of points $(y,s)\in\Pc_0$ such that there exists a causal chain from a point of $\Pc_0$ in $((Q_{2\ell+1}(x)+B_{2})\setminus Q_{2\ell+1}(x))\times[0,1]$ towards $(y,s)$.
By construction, we note that this random variable $\rho_x^\ell$ is $\sigma\big(\Pc_0|_{((Q_{2\ell+1}(x)+B_{\rho_x^\ell})\setminus Q_{2\ell+1}(x))\times\R_+}\big)$-measurable.

\smallskip\noindent
It remains to estimate the decay of the probability law of the action radii. The event $\rho_x^\ell>L$ entails the existence of some $(y,s)\in \Pc_0$ with $y\in(Q_{2\ell+1}(x)+B_{L+2})\setminus(Q_{2\ell+1}(x)+B_{L})$ and of a causal chain from a point of $((Q_{2\ell+1}(x)+B_{2})\setminus Q_{2\ell+1}(x))\times\R_+$ towards $(y,s)$.
Arguing as in~\cite[proof of~(0.2) in Lemma~4.2]{Penrose-Yukich-02}, for all $\theta>0$, the probability that there exists a causal chain from a point of $\Pc_0$ in $Q(x)\times[0,1]$ to a point of $\Pc_0$ in $Q(y)\times[0,1]$ is bounded by
\[e^\theta\big(\tfrac{3^dC}{C+\theta}\big)^{|x-y|},\]
that is, after optimization in $\theta$,
\[Ce^{-|x-y|\log\frac{|x-y|}{C}}.\]
By a similar covering argument as in the proof of Proposition~\ref{prop:rpm}, all the assumptions of Theorem~\ref{th:ar-rpm} are then satisfied with $\pi(\ell)=Ce^{-\frac\ell C\log\frac\ell C}$, and the conclusion follows.
\end{proof}

}

\subsection{Random inclusions with random radii}\label{chap:genincl}

We consider typical examples of random fields on $\R^d$ taking random values on random inclusions centered at the points of some random point process $\Pc$. The inclusions are allowed to have i.i.d.\@ random shapes (hence in particular i.i.d.\@ random radii). For the random point process $\Pc$,  we consider  projections $\Phi(\Pc_0)$ of some Poisson point process $\Pc_0$ on $\R^d\times\R^l$ with intensity $\mu>0$, and shall assume that for all $x\in\Z^d$ the process $\Pc$ admits an action radius $\rho_x$ wrt $\Pc_0$ on $Q(x)\times\R^l$.

\medskip

We turn to the construction of the random inclusions.
Let $V$ be a nonnegative random variable (corresponding to the random radius of the inclusions).
In order to define the random shapes, we consider the set $Y$ of all nonempty Borel subsets $E\subset\R^d$ with $\sup_{x\in E}|x|=1$, and endow it with the $\sigma$-algebra $\Y$ generated by all subsets of the form $\{E\in Y\,:\, x_0\in E\}$ with $x_0\in\R^d$.
Let $S$ be a random
nonempty Borel subset of $\R^d$ with $\sup_{x\in S}|x|=1$ a.s., that is, a random element in the measurable space $Y$.
(Note that $V$ and $S$ need not be independent.)
Let $\hat\Pc_0:=\{P_j,V_j,S_j\}_j$ be a decorated point process associated with the random point process $\Pc_0=\{P_j\}_j$ and the random element $(V,S)$.
The collection of random inclusions is then given by $\{I_j\}_j$ with $I_j:=P_j+V_jS_j$.

\medskip

It remains to associate random values to the random inclusions. Since inclusions may intersect each other, several constructions can be considered; we focus on the following three typical choices.
\begin{itemize}
\item Given $\alpha,\beta\in\R$, we set $\hat\Pc_1:=\hat\Pc_0$, and we consider the $\sigma(\hat\Pc_1)$-measurable random field $A_1$ that is equal to $\alpha$ inside the inclusions, and to $\beta$ outside. More precisely,
\[A_1:=\beta+(\alpha-\beta)\mathds1_{\bigcup_jI_j}.\]
The simplest example is the random field $A_1$ obtained for $\Pc$ a Poisson point process on $\R^d$ with intensity $\mu=1$, and for $S$ the unit ball centered at the origin in $\R^d$; this is referred to as the {\it Poisson unbounded spherical inclusion model}.
\smallskip\item Let $\beta\in\R$, let $f:\R\to\R$ be a Borel function, and let $W$ be a measurable random field on $\R^d$. Let $\hat\Pc_2:=\{P_j,V_j,S_j,W_j\}$ be a decorated point process associated with $\hat\Pc_0$ and $W$. We then consider the $\sigma(\hat\Pc_2)$-measurable random field $A_2$ that is equal to $f(\sum_{j:x\in I_j}W_j)$ at any point $x$ of the inclusions, and to $\beta$ outside.
More precisely,
\[A_2(x):=\beta+\bigg(f\Big(\sum_{j}W_j(x)\mathds1_{I_j}(x)\Big)-\beta\bigg)\mathds1_{\bigcup_jI_j}(x).\]
(Of course, this example can be generalized by considering more general functions than simple sums of the values $W_j$; the corresponding concentration properties will then remain the same.)
\smallskip\item Let $\beta\in\R$, let $W$ be a measurable random field on $\R^d$, and let $U$ denote a uniform random variable on $[0,1]$. Let $\hat\Pc_3:=\{P_j,V_j,S_j,W_j,U_j\}$ be a decorated point process associated with $\hat\Pc_0$ and $(W,U)$. Given a $\sigma(VS,W)$-measurable random variable $P(VS,W)$, we say that inclusion $I_j$ has the priority on inclusion $I_i$ if $P(V_jS_j,W_j)<P(V_iS_i,W_i)$ or if $P(V_jS_j,W_j)=P(V_iS_i,W_i)$ and $U_j<U_i$. Since the random variables $\{U_j\}_j$ are a.s.\@ all distinct, this defines a priority order on the inclusions on a set of maximal probability. Let us then relabel the inclusions and values $\{(I_j,V_j)\}_j$ into a sequence $(I_j',V_j')_j$ in such a way that for all $j$ the inclusion $I_j'$ has the $j$-th highest priority.
We then consider the $\sigma(\hat\Pc_3)$-measurable random field $A_3$ defined as follows,
\[A_3:=\beta+\sum_j(W_j'-\beta)\mathds1_{I_j'\setminus\bigcup_{i:i<j}I_i'}.\]
(Note that this example includes in particular the case when the priority order is purely random (choosing $P\equiv0$), as well as the case when the priority is given to inclusions with e.g.\@ larger or smaller radius (choosing $P(VS,W)=V$ or $-V$, respectively).)
\end{itemize}
In each of these three examples, $s=1,2,3$, the random field $A_s$ is $\sigma(\hat\Pc_s)$-measurable, for some completely independent random point process $\hat\Pc_s$ on $\R^d\times \R^l\times\R_+\times Y_s$ and some measurable space $Y_s$ (the set $\R^d\times\R^l$ stands for the domain of the point process $\Pc_0=\{P_j\}_j$, and the set $\R_+$ stands for the domain of the radius variables~$\{V_j\}_j$).
In order to recast this into the framework of Section~\ref{eq:transfiid}, we may define $\X_s(x,t,v):=\Pc_s|_{Q(x)\times Q(t)\times Q(v)\times Y_s}$, so that $\X_s$ is a completely independent measurable random field on the space $X=\Z^d\times \Z^l\times\Z$ with values in the space of (locally finite) measures on $Q^d\times Q^l\times Q^1\times Y_s$.

\medskip

{\color{black}Rather than stating a general result, we focus on the typical examples of the Poisson point process and of the random parking or hardcore Poisson processes.} For the latter, a refined analysis is needed to avoid a loss of integrability.
{\color{black}Note that logarithmic Sobolev inequalities are only obtained in case of bounded radii; this is due to the strong additional condition for the validity of~\eqref{eq:arlsi2} in Theorem~\ref{th:ar}.}
The proof below yields slightly more general results than stated and can easily be adapted to various other situations.
{\color{black}\begin{prop}\label{prop:genincl-loss}
Set $\gamma(v):=\p[v-1/2\le V< v+1/2]$ and $\tilde\gamma(v):=\sup_{u\ge v}\gamma(u)$.
\begin{enumerate}[(i)]
\item Assume that $\Pc=\Pc_0$ is a Poisson point process on $\R^d$ with constant intensity $\mu$.
Then, for each $s=1,2,3$, the above-defined random field $A_s$ satisfies~\emph{($\osc$-MSG)} and~\emph{($\osc$-MCI)}
with weight $\pi(\ell)=\mu\,(\ell+1)^d\tilde\gamma(\frac1{\sqrt d}\ell-3)$.
If the radius law $V$ is uniformly bounded, the standard logarithmic Sobolev inequality \emph{($\osc$-LSI)} further holds.
\smallskip\item Assume that $\Pc$ is a random parking process on $\R^d$ as constructed in Section~\ref{chap:RPM}.
Then, for each $s=1,2,3$, the above-defined random field $A_s$ satisfies \emph{($\osc$-MSG)} with weight $\pi(\ell)=C\big(e^{-\ell/C}+(\ell+1)^d\tilde\gamma(\frac14\ell-1)\big)$.
If the radius law $V$ is uniformly bounded,
the logarithmic Sobolev inequality \emph{($\osc$-MLSI)} further holds with weight $Ce^{-\ell/C}$. If $\Pc$ is rather the hardcore Poisson process on $\R^d$ as constructed in Section~\ref{sec:hardcore}, then the same result holds with $e^{-\ell/C}$ replaced by $e^{-\frac\ell C\log\frac\ell C}$.
\qedhere
\end{enumerate}
\end{prop}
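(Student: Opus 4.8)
The plan is to treat the three constructions $s=1,2,3$ in parallel by exploiting their common locality: in each case, modifying a single inclusion $I_j=P_j+V_jS_j$ alters the field $A_s$ only at points of $I_j$ itself. For $A_1$ this is immediate; for $A_2$ the value at a point $y$ depends only on the inclusions covering $y$; and for $A_3$ adding or removing one inclusion leaves the relative priority of all other inclusions unchanged, so again only points of $I_j$ are affected. Consequently, resampling the points carrying a radius of size $\approx v$ moves the field only within distance $\approx v$ of the resampled centers, on top of whatever displacement the underlying point process $\Pc$ itself produces. The key structural point is that $\Pc$ is built from the positions (and birth times) of $\Pc_0$ alone and is insensitive to the decoration radii $V_j$, so these two sources of nonlocality decouple.

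For part~(i), with $\Pc=\Pc_0$ Poisson, I would slice the underlying completely independent structure by radius, using the coordinate $v\in\Z$ in $\X_s(x,t,v):=\Pc_s|_{Q(x)\times Q(t)\times Q(v)\times Y_s}$. The gain of slicing is that within the radius-$v$ slice the radius is confined to $[v-\tfrac12,v+\tfrac12)$, so one may take a \emph{deterministic} action radius of order $v$ for resampling $X_{0,v}$; assumption~(c) of Theorem~\ref{th:ar} then holds trivially, since a constant is independent of everything. Feeding this into the weight of Theorem~\ref{th:ar}, the factor $\pr{\X^{0,v}\ne\X}\lesssim\mu\gamma(v)$ (the expected number of radius-$v$ points in a unit cell) supplies the decay, while the slice $v$ contributes at the scale $\ell\simeq\sqrt d\,v$; after passing to the envelope $\tilde\gamma$ and tracking the geometric constants from the cube-to-ball conversion and the unit support of $S$, this yields the stated $\pi(\ell)=\mu(\ell+1)^d\tilde\gamma(\tfrac1{\sqrt d}\ell-3)$. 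The multiscale covariance inequality follows from the covariance form recorded in Remark~\ref{rem:ar-ci}, which here recasts into canonical form. Finally, when $V$ is bounded the field is a finite-range transformation of $\hat\Pc_0$, so Proposition~\ref{prop:criMSG} yields the standard ($\osc$-LSI) directly.

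For part~(ii), with $\Pc$ the random parking (or hardcore Poisson) process, I would first dispose of \emph{bounded} radii: the inclusion reach then adds only a fixed constant to the parking action radius $\rho_x^\ell$ constructed in the proof of Proposition~\ref{prop:rpm} (resp.\ Proposition~\ref{prop:SG0}), so both the measurability property~(c) and the exponential tail $e^{-\ell/C}$ (resp.\ $e^{-\frac{\ell}{C}\log\frac{\ell}{C}}$) of $\rho_x^\ell$ survive, and Theorem~\ref{th:ar-rpm} delivers ($\osc$-MSG) and ($\osc$-MLSI) at once. For \emph{unbounded} radii one cannot proceed this way, and this is where I expect the main obstacle: a single resampled point near $x$ with an atypically large radius reaches far out, yet its radius is stored at its center---inside the resampled region $X_x^\ell$---rather than in the annulus it reaches, so the combined action radius violates the measurability assumption~(c) of Theorem~\ref{th:ar-rpm}. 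The remedy is a union bound at the level of the action radius: the field can differ far from $x$ either because the parking configuration propagates far (handled by Theorem~\ref{th:ar-rpm}, giving the $e^{-\ell/C}$ contribution) or because some resampled inclusion carries a large radius (handled by slicing in the radius as in part~(i) and invoking Theorem~\ref{th:ar}, giving the $(\ell+1)^d\tilde\gamma(\tfrac14\ell-1)$ contribution). Summing the two produces the stated weight for ($\osc$-MSG), and for the hardcore Poisson process one simply substitutes the sharper tail of Proposition~\ref{prop:SG0}.

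The reason the logarithmic Sobolev inequality is confined to bounded radii is structural. When radii are unbounded, controlling the inclusion reach forces the radius slicing, but the entropy estimate~\eqref{eq:ent-X} carries a $\supess_{\X'}$ in place of the expectation in the variance estimate~\eqref{eq:var-X}. Under this supremum the event that resampling the radius-$v$ slice actually changes the configuration can always be realized, so the small factor $\mu\gamma(v)$ that made the Poincar\'e weight decay is irretrievably lost, and the resulting entropy weight fails to be integrable---this is precisely the stringency behind~\eqref{eq:arlsi2}. For bounded radii only finitely many slices occur, the loss is harmless, and the logarithmic Sobolev inequality survives, as reflected in the purely exponential ($\osc$-MLSI) weight $Ce^{-\ell/C}$.
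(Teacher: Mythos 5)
Your part~(i) is essentially the paper's own proof: slice the product structure by the radius coordinate, take an action radius of order $v$ on each slice (the paper's choice is $\rho^s_{x,v}=v\,\mathds1_{\X_s\ne\X_s^{x,v}}$, your deterministic variant works equally well and satisfies hypothesis~(c) trivially), feed the factor $\pr{\X_s^{x,v}\ne\X_s}\lesssim\mu\,\gamma(v)$ into the weight of Theorem~\ref{th:ar}, and obtain ($\osc$-MCI) from Remark~\ref{rem:ar-ci}; the bounded-radius ($\osc$-LSI) via Proposition~\ref{prop:criMSG} is also in order, and your explanation of why the entropy inequality dies for unbounded radii (the $\supess_{\X'}$ in~\eqref{eq:ent-X} erases the small factor $\mu\gamma(v)$) matches the paper's remark. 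Your treatment of part~(ii) for \emph{bounded} radii --- adding a constant to the parking action radius of Proposition~\ref{prop:rpm} and re-running Theorem~\ref{th:ar-rpm} on the decorated driving Poisson process --- is also a valid (and slightly more direct) route.

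The gap is in part~(ii) with unbounded radii, which is the substance of the statement. Your ``union bound at the level of the action radius'' is not an argument that can be executed: Theorems~\ref{th:ar} and~\ref{th:ar-rpm} are statements about a field \emph{together with} an action radius satisfying their hypotheses (a)--(c), and, as you yourself note, the combined action radius (parking propagation plus inclusion reach) satisfies neither the measurability hypothesis of Theorem~\ref{th:ar-rpm} nor the independence hypothesis of Theorem~\ref{th:ar}. There is no mechanism for applying each theorem to ``one cause'' of the change of the field and then adding the two conclusions; the variance is a single quantity, and it must be \emph{split} before either theorem can see anything. This is exactly the step the paper supplies and your proof lacks: since the parking positions $\Pc$ and the i.i.d.\ decorations $\Dc=\{V_j,Y_{s,j}\}_j$ are independent, one tensorizes
\begin{equation*}
\var{Z(A_s)}\,\le\, \E_{\Pc}\big[\Var_{\Dc}[Z(A_s)]\big]+\E_{\Dc}\big[\Var_{\Pc}[Z(A_s)]\big],
\end{equation*}
and then (1) the decoration term is handled by Efron--Stein for $\Dc$, conditioning on the local maximal radius $R(x)$ and using the hardcore bound $\sharp(\Pc\cap B(x))\le C$ to keep the ratio $\pr{\ell-1\le R(x)<\ell}/\pr{R(x)<\ell}$ of order $\gamma(\ell)$; and (2) the position term is handled by the multiscale inequality for $\Pc$ itself (Propositions~\ref{prop:rpm} and~\ref{prop:SG0}, in the covariance form~\eqref{eq:ass-ci-pc}), which however produces oscillations $\oscd{\Pc,B_\ell(x)}Z(A_s)$ \emph{with respect to $\Pc$}, not with respect to $A_s$. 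Converting these requires a second conditioning on the maximum of the $O(\ell^d)$ radii carried by the points of $B_\ell(x)$, and this cross term --- parking nonlocality compounded with the reach of \emph{non-resampled} inclusions attached to points whose parking status flips --- is what generates the convolution weight $(\ell+1)^d\int_0^\ell \pr{r-1\le V<r}\,\pi_0(\ell-r)\,dr$ of the paper, subsequently absorbed into $C\big(e^{-\ell/C}+(\ell+1)^d\tilde\gamma(\tfrac14\ell-1)\big)$. In your decomposition the two causes are treated as if they acted on disjoint regions of probability space, so this interaction never appears, and the stated weight is asserted rather than derived.
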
}

{\color{black}\begin{rem}
As shown in the proof, in the case of item~(ii), a corresponding covariance inequality holds next to ($\osc$-MSG) in the following form, for all $\sigma(A_s)$-measurable random variables $Y(A_s),Z(A_s)$,
\begin{multline}\label{eq:MSG-RPM}
\qquad\cov{Y(A_s)}{Z(A_s)}\le \int_{\R^d}\bigg(\int_0^\infty\expec{\Big(\oscd{A_s,B_{2\ell+1}(x)}Y(A_s)\Big)^2}(\ell+1)^{-d}\pi(\ell)\,d\ell\bigg)^\frac12\\
\times\bigg(\int_0^\infty\expec{\Big(\oscd{A_s,B_{2\ell+1}(x)}Z(A_s)\Big)^2}(\ell+1)^{-d}\pi(\ell)\,d\ell\bigg)^\frac12dx.
\end{multline}
We refer to Remark~\ref{rem:cov-alm} for possible reformulation in the canonical form of the multiscale covariance inequality~($\osc$-MCI).
\end{rem}}

\begin{proof}[Proof of Proposition~\ref{prop:genincl-loss}]
We split the proof into two steps. 
We first apply the general results of Theorem~\ref{th:ar}, and then treat more carefully the case of the random parking point process.

\medskip

\step1 Proof of~(i).

\noindent
In the case of a Poisson point process $\Pc=\Pc_0$ on $\R^d$ with constant intensity $\mu>0$,
an action radius for $A_s$ wrt $\X_s$ on $\{x\}\times\{v\}$
 is  given by
\[\rho_{x,v}^s=v\,\mathds1_{\X_s\ne \X_s^{x,v}}.\]
Estimating
\begin{align*}
\pr{\ell-1\le\rho^s_{x,v}< \ell,~\X_s^{x,v}\ne \X_s}
&\le \pr{\X_s^{x,v}\ne \X_s}\mathds1_{\ell-1\le v<\ell}\\
&\le2\mu\gamma(v)\,\mathds1_{\ell-1\le v<\ell},
\end{align*}
and using that $\prm{\rho^s_{x,v}< \ell}=1$ if $v<\ell$, we obtain for all $x\in\Z^d$, $v\ge0$, $\ell\ge1$,
\begin{align*}
\frac{\pr{\ell-1\le \rho^s_{x,v}< \ell,\,\X_s^{x,v}\ne \X}}{\pr{\rho^s_{x,v}< \ell}}&\le
2\mu\gamma(v)\,\mathds1_{\ell-1\le v< \ell},
\end{align*}
so that Theorem~\ref{th:ar} and Remark~\ref{rem:ar-ci} with influence function $f(u)=u$ yield
\begin{multline*}
\cov{Y(A_s)}{Z(A_s)}\\
\le\mu\sum_{x\in\Z^d}\sum_{v=0}^\infty\gamma(v)\,\expec{\Big(\oscd{A_s,Q_{2v+3}(x)}Y(A_s)\Big)^2}^\frac12\expec{\Big(\oscd{A_s,Q_{2v+3}(x)}Z(A_s)\Big)^2}^\frac12.
\end{multline*}
Replacing sums by integrals, the desired covariance estimate~($\osc$-MCI) follows.

\medskip

\step2 Proof of (ii).\\
In this step, we consider the case when the stationary point process $\Pc$ satisfies a hardcore condition $\sharp(\Pc\cap Q)\le C$ a.s.\@ for some deterministic constant $C>0$, and also satisfies the following covariance inequality (resp.\@ the corresponding ($\osc$-MSG)) with some integrable weight $\pi_0$: for all $\sigma(\Pc)$-measurable random variables $Y(\Pc),Z(\Pc)$,
\begin{multline}\label{eq:ass-ci-pc}
\cov{Y(\Pc)}{Z(\Pc)}\le \int_{\R^d}\bigg(\int_0^\infty\expec{\Big(\oscd{\Pc,B_{\ell}(x)}Y(\Pc)\Big)^2}(\ell+1)^{-d}\pi_0(\ell)\,d\ell\bigg)^\frac12\\
\times\bigg(\int_0^\infty\expec{\Big(\oscd{\Pc,B_{\ell}(x)}Z(\Pc)\Big)^2}(\ell+1)^{-d}\pi_0(\ell)\,d\ell\bigg)^\frac12dx.
\end{multline}
We then show that, for each $s=1,2,3$, the random field $A_s$ satisfies the following covariance inequality (resp.\@ the corresponding ($\osc$-MSG)): for all $\sigma(A_s)$-measurable random variables $Y(A_s),Z(A_s)$ we have
{\color{black}\begin{multline}\label{eq:res-RPM-type-cov}
\cov{Y(A_s)}{Z(A_s)}\le \int_{\R^d}\bigg(\int_0^\infty\expec{\Big(\oscd{A_s,B_{2\ell+1}(x)}Y(A_s)\Big)^2}(\ell+1)^{-d}\pi(\ell)\,d\ell\bigg)^\frac12\\
\times\bigg(\int_0^\infty\expec{\Big(\oscd{A_s,B_{2\ell+1}(x)}Z(A_s)\Big)^2}(\ell+1)^{-d}\pi(\ell)\,d\ell\bigg)^\frac12dx,
\end{multline}
where we have set
\[\pi(\ell):=C(\ell+1)^d\Big(\pr{\ell-1\le V<\ell}+\int_0^\ell \pr{r-1\le V<r}\pi_0(\ell-r)\,dr\Big).\]
In particular, combined with Propositions~\ref{prop:rpm}--\ref{prop:SG0}, this implies the covariance inequality~\eqref{eq:MSG-RPM} in the case of the random parking or hardcore Poisson process.}

\smallskip\noindent
To simplify notation, we only treat the case of the Poincaré inequality.
Consider a measurable enumeration of the point process $\Pc=\{Z_j\}_j$, let $\{Z_j,V_j,Y_{s,j}\}$ be a decorated point process associated with $\Pc$ and the decoration law $(V,Y_s)$, and let $\Dc:=\{V_j,Y_{s,j}\}_j$ denote the decoration sequence.
Since $\Pc$ and $\Dc$ are independent, the expectation $\E$ splits into $\E=\E_{\Pc}\E_{\Dc}$, where $\E_{\Pc}=\E[\cdot\|\Dc]$ denotes the expectation wrt $\Pc$, and where $\E_{\Dc}=\E[\cdot\|\Pc]$ denotes the expectation wrt $\Dc$.
{\color{black}By tensorization of the variance in form of
\begin{multline*}
\var{Z(A_s)}=\E_{\Pc}\big[\Var_{\Dc}[Z(A_s)]\big]+\Var_{\Pc}\big[\E_{\Dc}[Z(A_s)]\big]\\
\le \E_{\Pc}\big[\Var_{\Dc}[Z(A_s)]\big]+\E_{\Dc}\big[\Var_{\Pc}[Z(A_s)]\big],
\end{multline*}}
the Poincaré inequality assumption for $\Pc$ (cf.~\eqref{eq:ass-ci-pc}) and the standard Poincaré inequality~\eqref{eq:var-X} for the i.i.d.\@ sequence $\Dc$ then yield for all $\sigma(A_s)$-measurable random variables~$Z(A_s)$,
\begin{multline}\label{eq:var-decomp-RPM}
\var{Z(A_s)}\le \frac12\sum_{k}\expec{\big(Z(A_s)-Z(A_s^k)\big)^2}\\
+\int_0^\infty\int_{\R^d}\expec{\Big(\oscd{\Pc,B_{\ell}(x)}Z(A_s)\Big)^2}dx\,(\ell+1)^{-d}\pi_0(\ell)\,d\ell,
\end{multline}
where $A^k_s$ corresponds to the field $A_s$ with the decoration $(V_k,Y_{s,k})$ replaced by an i.i.d.\@ copy $(V'_k,Y'_{s,k})$.
We separately estimate the two RHS terms in~\eqref{eq:var-decomp-RPM}, and we start with the first.
For all $x\in\R^d$, we define the following two random variables,
{\color{black}\begin{align*}
N(x):=\sharp(\Pc\cap B(x)),\qquad R(x):=\max\{V_j,V_j':Z_j\in B(x)\}.
\end{align*}}
Let $R_0\ge1$ denote the smallest value such that $\pr{V<R_0}\ge\frac12$. By a union bound and the hardcore assumption, there holds
\begin{align}\label{eq:choice-R0-SGRPM}
\pr{R(x)<R_0}=\expec{\pr{V<R_0}^{2N(x)}}\ge \expec{2^{-2N(x)}}\ge 4^{-C}.
\end{align}
Conditioning wrt the value of $R(x)$, we obtain
\begingroup\allowdisplaybreaks
\begin{eqnarray*}
\lefteqn{\sum_{k}\expec{\big(Z(A_s)-Z(A_s^k)\big)^2}}\\
&\lesssim&\int_{R_0}^\infty\int_{\R^d}\sum_{k}\expec{\big(Z(A_s)-Z(A_s^k)\big)^2\mathds1_{Z_k\in B(x)}\mathds1_{\ell-1\le R(x)<\ell}}dx\,d\ell\\
&&\qquad+\int_{\R^d}\sum_{k}\expec{\big(Z(A_s)-Z(A_s^k)\big)^2\mathds1_{Z_k\in B(x)}\mathds1_{R(x)<R_0}}dx\\
&\le&\int_{R_0}^\infty\int_{\R^d}\expec{\Big(\oscd{A_s,B_{\ell+1}(x)}Z(A_s)\Big)^2N(x)\,\mathds1_{\ell-1\le R(x)<\ell}}dx\,d\ell\\
&&\qquad+\int_{\R^d}\expec{\Big(\oscd{A_s,B_{R_0+1}(x)}Z(A_s)\Big)^2N(x)}dx\\
&=&\int_{R_0}^\infty\int_{\R^d}\expeC{\Big(\oscd{A_s,B_{\ell+1}(x)}Z(A_s)\Big)^2N(x)\,\mathds1_{R(x)\ge\ell-1}}{R(x)<\ell}\pr{R(x)<\ell}dx\,d\ell\\
&&\qquad+\int_{\R^d}\expec{\Big(\oscd{A_s,B_{R_0+1}(x)}Z(A_s)\Big)^2N(x)}dx.
\end{eqnarray*}
\endgroup
Using the hardcore assumption in the form $N(x)\le C$ a.s., and noting that given $R(x)<\ell$ the random variable $R(x)$ is independent of $A_s|_{\R^d\setminus B_{\ell+1}(x)}$, we deduce
\begin{align*}
\sum_{k}\expec{\big(Z(A_s)-Z(A_s^k)\big)^2}
&\lesssim\int_{R_0}^\infty\int_{\R^d}\expec{\Big(\oscd{A_s,B_{\ell+1}(x)}Z(A_s)\Big)^2}\frac{\pr{\ell-1\le R(x)<\ell}}{\pr{R(x)<\ell}}dx\,d\ell\\
&\hspace{3cm}+\int_{\R^d}\expec{\Big(\oscd{A_s,B_{R_0+1}(x)}Z(A_s)\Big)^2}dx.
\end{align*}
Estimating by a union bound $\pr{\ell-1\le R(x)<\ell}\le C\,\pr{\ell-1\le V<\ell}$, and making use of the property~\eqref{eq:choice-R0-SGRPM} of the choice of $R_0\ge1$, we conclude
\begin{multline}\label{eq:var-decomp-RPM-1st-term}
{\sum_{k}\expec{\big(Z(A_s)-Z(A_s^k)\big)^2}}\\
\lesssim \int_{0}^\infty\int_{\R^d}\expec{\Big(\oscd{A_s,B_{\ell+1}(x)}Z(A_s)\Big)^2}dx\,\pr{\ell-1\le V<\ell}\,d\ell.
\end{multline}
It remains to estimate the second RHS term in~\eqref{eq:var-decomp-RPM}.
The hardcore assumption for $\Pc$ yields by stationarity $\sharp(\Pc\cap B_{\ell}(x))\le C\ell^d$ a.s.
Also note that a union bound gives
\begin{eqnarray*}
{\pr{r-1\le \max_{1\le j\le C\ell^d}V_j<r}}&\le& \sum_{j=1}^{C\ell^d}\pr{V_j\ge r-1,\text{ and }V_k<r~\forall 1\le k\le C\ell^d}\\
&=& C\ell^d\,\pr{V<r}^{C\ell^d-1}\pr{r-1\le V<r},
\end{eqnarray*}
hence for all $r\ge R_0$,
\begin{align*}
\frac{\pr{r-1\le \max_{1\le j\le C\ell^d}V_j<r}}{\pr{\max_{1\le j\le C\ell^d}V_j<r}}&\le C\ell^d\frac{\pr{r-1\le V<r}}{\pr{V<r}}\le 2C\ell^d\pr{r-1\le V<r}.
\end{align*}
{\color{black}Arguing similarly as above, we then find
\begin{multline*}
{\int_0^\infty\int_{\R^d}\expec{\Big(\oscd{\Pc,B_\ell(x)}Z(A_s)\Big)^2}dx\,(\ell+1)^{-d}\pi_0(\ell)\,d\ell}\\
\lesssim \int_0^\infty\int_{0}^\infty\int_{\R^d}\expec{\Big(\oscd{A_s,B_{\ell+r}(x)}Z(A_s)\Big)^2}dx\,\pr{r-1\le V<r}dr\,\pi_0(\ell)\,d\ell.
\end{multline*}
Combining this with~\eqref{eq:var-decomp-RPM} and~\eqref{eq:var-decomp-RPM-1st-term}, the conclusion~\eqref{eq:res-RPM-type-cov} follows in variance form.}
\end{proof}

\subsection{Dependent coloring of random geometric patterns}\label{chap:dep-col-geom-pat}

Up to here, besides Gaussian random fields, all the examples of random fields that we have been considering corresponded to random geometric patterns (various random point processes constructed from a higher-dimensional Poisson process, or random tessellations) endowed with an independent coloring determining e.g.\@ the size and shape of the cells and the value of the field in the cells. In the present subsection, we 
consider {\it dependent} colorings of random geometric patterns. The random field $A$ is now a function of both a product structure (typically some decorated Poisson point process $\hat\Pc$), and of a random field $G$ (e.g.\@ a Gaussian random field) which typically has long-range correlations but is assumed to satisfy some multiscale functional inequality.
In other words, this amounts to mixing up all the previous examples. Rather than stating general results in this direction, we only treat a number of typical concrete examples in order to illustrate the robustness of the approach.

\begin{itemize}
\item The first example $A_1$ is a random field on $\R^d$ corresponding to random spherical inclusions centered at the points of a Poisson point process $\Pc$ of intensity $\mu=1$, with i.i.d.\@ random radii of law $V$, but such that the values on the inclusions are determined by some random field $G_1$ with long-range correlations.
\\
More precisely, we let $\hat\Pc_1:=\{\tilde P_j,\tilde V_j,\tilde U_j\}_j$ denote a decorated point process associated with $\Pc$ and $(V,U)$, where $U$ denotes an independent uniform random variable on $[0,1]$. Independently of $\hat\Pc_1$ we choose a jointly measurable stationary bounded random field $G_1$ on $\R^d$, with typically long-range correlations. The collection of random inclusions is given by $\{\tilde I_1^j\}_j$ with $\tilde I_1^j:=\tilde P_j+\tilde V_jB$. As in the third example of Section~\ref{chap:genincl}, we choose a $\sigma(V,U)$-measurable random variable $P(V,U)$, and we say that the inclusion $\tilde I_1^j$ has the priority on inclusion $\tilde I_1^i$ if $P(\tilde V_j,\tilde U_j)<P(\tilde V_i,\tilde U_i)$ or if $P(\tilde V_j,\tilde U_j)=P(\tilde V_i,\tilde U_i)$ and $\tilde U_j<\tilde U_i$. This defines a priority order on the inclusions on a set of maximal probability, and we then relabel the inclusions and the points of $\hat\Pc_1$ into a sequence $( I_1^j, P_j,V_j,U_j)_j$ such that for all $j$ the inclusion $I_1^j$ has the $j$-th highest priority. Given $\beta\in\R$, we then consider the $\sigma(\hat\Pc_1,G_1)$-measurable random field $A_1$ defined as follows,
\[A_1:=\beta+\sum_j\big(G_1(P_j)-\beta\big)\mathds1_{I_1^j\setminus\bigcup_{i:i<j}I_1^i}.\]
\item The second example $A_2$ is a random field on $\R^d$ corresponding to random inclusions centered at the points of a Poisson point process $\Pc$ of intensity $\mu=1$, with i.i.d.\@ random radii of law $V$, but with orientations determined by some random field $G_2$ with long-range correlations.
\\
More precisely, we let $\hat\Pc_2:=\{P_j,V_j\}_j$ denote a decorated point process associated with $\Pc$ and $V$, we choose a reference shape $S\in\B(\R^d)$ with $0\in S$, and independently of $\hat\Pc_2$ we choose a jointly measurable stationary bounded random field $G_2$ on $\R^d$ with values in the orthogonal group $O(d)$ in dimension $d$, and with typically long-range correlations. The collection of random inclusions is then given by $\{I_2^j\}_j$ with $I_2^j:=P_j+G_2(P_j) S$. Given $\alpha,\beta\in\R$, and given a function $\phi:\R\to\R$ with $\phi(t)=1$ for $t\le1$ and $\phi(t)=0$ for $t\ge2$, and with $\|\phi'\|_{\Ld^\infty}\lesssim1$, we then consider the $\sigma(\hat\Pc_2,G_2)$-measurable random field $A_2$ defined as follows,
\[A_2(x):=\beta+(\alpha-\beta)\,\phi\Big(d\big(x\,,\,\cup_jI_2^j\big)\Big).\]
(Note that the smoothness of this interpolation $\phi$ between the values $\alpha$ and $\beta$ is crucial for the arguments below.)

\smallskip\item The third example $A_3$ is a random field on $\R^d$ corresponding to the Voronoi tessellation associated with the points of a Poisson point process $\Pc$ of unit intensity, such that the values on the cells are determined by some random field $G_3$ with long-range correlations.
\\
More precisely, we let $\hat\Pc_3:=\Pc=\{P_j\}_j$, and we let $\{C_j\}_j$ denote the partition of $\R^d$ into the Voronoi cells associated with the Poisson points $\{P_j\}_j$. Independently of $\hat\Pc_3$ we choose a jointly measurable stationary bounded random field $G_3$ on $\R^d$.
We then consider the $\sigma(\hat\Pc_3,G_3)$-measurable random field $A_3$ defined as follows,
\[A_3(x):=\sum_jG_3(P_j)\mathds1_{C_j}.\]
\end{itemize}

{\color{black}For each of these examples, we establish functional inequalities with the supremum derivative~$\parsup{}$, cf.~Section~\ref{chap:spectralgaps}.
The proof below is quite robust and many variants could be considered.}

\begin{prop}\label{prop:ex-dep-color}
For $s=1,2,3$, assume that the random field $G_s$ satisfies {\rm($\parfct{}$-MSG)} for some integrable weight $\pi_s$.
For $s=1,2$, set $\gamma(r):=\pr{r-1\le V<r}$.
Then the following holds.
\begin{enumerate}[(i)]
\item
For $s=1,2$, the above-defined random field $A_s$ satisfies the following multiscale Poincaré inequality: for all $\sigma(A_s)$-measurable random variables $Z(A_s)$ we have
\begin{multline}\label{eq:s1-gen}
\qquad \var{Z(A_s)}\\
\qquad \lesssim
\expecM{\int_0^\infty\int_{0}^\infty\int_{\R^d}\Big(\parsup{A,B_{\ell+r+1}(x)}Z(A_s)\Big)^2dx\,\big((\ell+1)^{-d}\wedge\gamma(r)\big)\pi_s(\ell)\,drd\ell}.
\end{multline}
In the case when the random variable $V$ is almost surely bounded by a deterministic constant, we rather obtain
\begin{multline}\label{eq:s1-bound}
\qquad\var{Z(A_s)}\lesssim\expecM{\int_{\R^d}\Big(\oscd{A_s,B_{C}(x)}Z(A_s)\Big)^2dx}\\
+\expecM{\int_0^\infty\int_{\R^d}\Big(\parfct{A_s,B_{\ell+C}(x)}Z(A_s)\Big)^2dx\,(\ell+1)^{-d}\pi_s(\ell)\,d\ell},
\end{multline}
and if the random field $G_s$ further satisfies \emph{($\parfct{}$-MLSI)} with weight $\pi_s$, then the corresponding logarithmic Sobolev inequality also holds, that is,
\begin{multline*}
\qquad\ent{Z(A_s)}\lesssim\expecM{\int_{\R^d}\Big(\oscd{A_s,B_{C}(x)}Z(A_s)\Big)^2dx}\\
+\expecM{\int_0^\infty\int_{\R^d}\Big(\parfct{A_s,B_{\ell+C}(x)}Z(A_s)\Big)^2dx\,(\ell+1)^{-d}\pi_s(\ell)\,d\ell}.
\end{multline*}
\item The above-defined random field $A_3$ satisfies \emph{($\parsup{}$-MSG)}
with weight $\pi(\ell):=C( \pi_3(\ell)+e^{-\frac1C\ell^d})$.
If the random field $G_3$ further satisfies \emph{($\parfct{}$-MLSI)} with weight $\pi_3$, then $A_3$ also satisfies~\emph{($\parsup{}$-MLSI)} with weight $\pi$.
\qedhere
\end{enumerate}
\end{prop}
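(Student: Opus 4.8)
The plan is to exploit the independence of the decorated Poisson structure $\hat\Pc_s$ and the long-range field $G_s$ through a tensorization of the variance (and, for the logarithmic Sobolev statements, of the entropy). Viewing $Z(A_s)$ as a function of the independent pair $(\hat\Pc_s,G_s)$ and writing $\E_{\hat\Pc},\Var_{\hat\Pc}$ (resp.\ $\E_G,\Var_G$) for expectation and variance in the point process (resp.\ in the field), Jensen's inequality gives
\[\var{Z(A_s)}=\E_{\hat\Pc}\big[\Var_G[Z(A_s)]\big]+\Var_{\hat\Pc}\big[\E_G[Z(A_s)]\big]\le\E_{\hat\Pc}\big[\Var_G[Z(A_s)]\big]+\E_G\big[\Var_{\hat\Pc}[Z(A_s)]\big],\]
together with the corresponding subadditivity of entropy, $\ent{Z(A_s)}\le\E_{\hat\Pc}[\Ent_G[Z(A_s)]]+\E_G[\Ent_{\hat\Pc}[Z(A_s)]]$, in the logarithmic Sobolev case. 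It then suffices to treat the point-process term (freezing $G_s$) and the field term (freezing $\hat\Pc_s$) separately, and to gather the two contributions by means of the pointwise bound $\osc,\parfct{}\lesssim\parsup{}$ valid for the bounded fields at hand.

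For the point-process term I would freeze $G_s$, so that $A_s$ becomes a transformation of the decorated Poisson structure alone. For $A_1$ and $A_2$ this is exactly the random-inclusion setting of Section~\ref{chap:genincl}: an action radius at a given site is essentially the local inclusion radius, and Theorem~\ref{th:ar} (together with the refinement of Proposition~\ref{prop:genincl-loss}(i)) yields an $\osc$-type bound with the radius weight governed by $\gamma$. For $A_3$ one instead uses the Voronoi action radius of Proposition~\ref{prop:voronoi}, producing the stretched-exponential contribution $e^{-\ell^d/C}$ that appears in the weight of part~(ii).

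The field term is where the new input lies. Freezing $\hat\Pc_s$, the map $G_s\mapsto Z(A_s)$ obeys the assumed multiscale Poincar\'e inequality for $G_s$, giving a bound in terms of $\parfct{G_s,B_{\ell+1}(y)}Z(A_s)$ with weight $\pi_s$. I would then use an approximate chain rule to pass from functional derivatives in $G_s$ to derivatives of $A_s$: since $G_s$ enters $A_s$ only through its values at the Poisson centers $\{P_j\}$, a perturbation of $G_s$ on $B_{\ell+1}(y)$ only alters $A_s$ on the inclusions (resp.\ Voronoi cells) whose centers lie in $B_{\ell+1}(y)$, a region contained in $B_{\ell+r+1}(y)$ once one controls the local spreading radius $r$, which is $\sigma(\hat\Pc_s)$-measurable. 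This yields $\parfct{G_s,B_{\ell+1}(y)}Z(A_s)\le\parfct{A_s,B_{\ell+r+1}(y)}Z(A_s)\le\parsup{A_s,B_{\ell+r+1}(y)}Z(A_s)$; conditioning on the spreading radius and integrating against its law, in combination with the weight $\pi_s$, reproduces the weight of~\eqref{eq:s1-gen} (for $A_3$ the cell diameters have the stretched-exponential tail of Proposition~\ref{prop:voronoi}, which merges with $\pi_3$ into $C(\pi_3(\ell)+e^{-\ell^d/C})$). When $V$ is a.s.\ bounded the spreading radius is deterministically bounded, the $r$-integral collapses to a localized $\osc$-term on a fixed ball $B_C$ plus a $\parfct{}$-tail, giving~\eqref{eq:s1-bound}; applying the same tensorization to the entropy, with the action-radius ($\osc$-MLSI) of Theorem~\ref{th:ar} (whose $\sigma(\hat\Pc_s)$-measurability hypothesis holds precisely because the action radius is then bounded) and the assumed ($\parfct{}$-MLSI) for $G_s$, produces the logarithmic Sobolev inequalities.

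The main obstacle is the chain rule for the orientation-driven example $A_2$, where $G_2$ sets not the value on an inclusion but its orientation $G_2(P_j)\in O(d)$. A perturbation of $G_2$ then \emph{displaces} the boundary $\partial(\cup_jI_2^j)$ rather than shifting an interior value, and $A_2(x)=\beta+(\alpha-\beta)\phi(d(x,\cup_jI_2^j))$ remains differentiable in $G_2$ only because $\phi$ is Lipschitz: $\phi'$ converts the (radius-proportional) boundary displacement into a functional derivative supported in an $O(r)$-neighborhood of the boundary. The delicate point is to verify, uniformly over the compact group $O(d)$, that this produces the same bound $\parfct{G_2,B_{\ell+1}(y)}Z(A_2)\lesssim\parsup{A_2,B_{\ell+r+1}(y)}Z(A_2)$ with the correct radius dependence, so that the argument above applies verbatim.
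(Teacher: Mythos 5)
Your proposal is correct and follows essentially the same route as the paper's proof: the tensorization of variance/entropy over the independent pair $(\hat\Pc_s,G_s)$, the treatment of the frozen-$G_s$ term by the action-radius arguments of Propositions~\ref{prop:voronoi} and~\ref{prop:genincl-loss}(i), and, for the frozen-$\hat\Pc_s$ term, the assumed ($\parfct{}$-MSG) for $G_s$ combined with a chain rule through the values $G_s(P_j)$, disjointness of the influence regions, and conditioning on the $\sigma(\hat\Pc_s)$-measurable spreading radius (which is precisely what forces the supremum derivative $\parsup{}$ and, via the Poisson tail of that radius, produces the weight $(\ell+1)^{-d}\wedge\gamma(r)$). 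The only step you flag without completing, the chain-rule bound for the orientation example $A_2$, is exactly the paper's short computation~\eqref{eq:partial-der-comp}: since $\phi'$ is bounded and vanishes unless $0<d(z,\cup_kI_2^k)<2$, one gets $\big|\partial A_2(z)/\partial G_2(P_j)\big|\le|\alpha-\beta|\,\big|\phi'\big(d(z,I_2^j)\big)\big|\,\mathds1_{R_2^j}(z)\le C\mathds1_{R_2^j}(z)$ on the set $R_2^j$ of points strictly closer to $I_2^j$ than to every other inclusion, so your argument applies verbatim.
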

\begin{proof}
For $s=1,2,3$, since $\hat\Pc_s$ and $G_s$ are independent, the expectation $\E$ splits into $\E=\E_{\hat\Pc_s}\E_{G_s}$, where $\E_{\hat\Pc_s}[\cdot]=\E[\cdot\|G_s]$ denotes the expectation wrt $\hat\Pc_s$, and where $\E_{G_s}[\cdot]=\E[\cdot\|\hat\Pc_s]$ denotes the expectation wrt $G_s$.
{\color{black}The starting point is then the tensorization of the variance and of the entropy,}
\begin{gather}\label{eq:var-tens-exact}
\var{Z(A_s)}=\Var_{G_s}[\E_{\hat\Pc_s}[Z(A_s)]]+\E_{G_s}[\Var_{\hat\Pc_s}[Z(A_s)]],\\
\ent{Z(A_s)}=\Ent_{G_s}[\E_{\hat\Pc_s}[Z(A_s)]]+\E_{G_s}[\Ent_{\hat\Pc_s}[Z(A_s)]].\nonumber
\end{gather}
In each of the examples under consideration, the estimate on the terms $\Var_{\hat\Pc_s}[Z(A_s)]$ and $\Ent_{\hat\Pc_s}[Z(A_s)]$ (with $G_s$ ``frozen'') follows from the same arguments as in the proof of Propositions~\ref{prop:voronoi} and~\ref{prop:genincl-loss}(i). We therefore focus on the estimates of $\Var_{G_s}[\E_{\hat\Pc_s}[Z(A_s)]]$ and $\Ent_{G_s}[\E_{\hat\Pc_s}[Z(A_s)]]$, and only treat the case of the variance in the proof. 

\medskip\noindent
Since the random field $G_s$ is assumed to satisfy ($\parfct{}$-MSG) with weight $\pi_s$, we obtain
\begin{multline}\label{eq:mixex-0}
\Var_{G_s}[\E_{\hat\Pc_s}[Z(A_s)]]\le\E_{\hat\Pc_s}[\Var_{G_s}[Z(A_s)]]\\
\le \expec{\int_0^\infty\int_{\R^d}\Big(\parfct{G_s,B_{\ell+1}(x)}Z(A_s)\Big)^2dx\,(\ell+1)^{-d}\pi_s(\ell)\,d\ell}.
\end{multline}
The chain rule yields
\begin{align*}
\parfct{G_s,B_{\ell+1}(x)}Z(A_s)&=\int_{B_{\ell+1}(x)}\Big|\frac{\partial Z(A_s(\hat\Pc_s,G_s))}{\partial G_s}(y)\Big|dy\\
&\le\int_{B_{\ell+1}(x)}\int_{\R^d}\Big|\frac{\partial Z(A_s)}{\partial A_s}(z)\Big|\Big|\frac{\partial A_s(\hat\Pc_s,G_s)(z)}{\partial G_s}(y)\Big|dzdy.
\end{align*}
Since $A_s$ is $\sigma(\hat\Pc_s,\{G_s(P_j)\}_j)$-measurable, we obtain 
\begin{align}\label{eq:mixex-1}
\parfct{G_s,B_{\ell+1}(x)}Z(A_s)&\le\sum_j\mathds1_{P_j\in B_{\ell+1}(x)}\int_{\R^d}\Big|\frac{\partial Z(A_s)}{\partial A_s}(z)\Big|\Big|\frac{\partial A_s(\hat\Pc_s,G_s)(z)}{\partial G_s(P_j)}\Big|dz
\end{align}
in terms of the usual partial derivative of $A_s(\hat\Pc_s,G_s)(z)$ wrt $G_s(P_j)$.
We now need to compute this derivative in each of the considered examples.
We claim that
\begin{align}\label{eq:partial-der-comp}
\Big|\frac{\partial A_s(\hat\Pc_s,G_s)(z)}{\partial G_s(P_j)}\Big|\le C\mathds1_{R_s^j}(z),
\end{align}
where
\[R_s^j:=\begin{cases}
I_1^j\setminus \bigcup_{i:i<j}I_1^i,&\text{if $s=1$};\\
\big\{x\,:\, 0<d(x,I_2^j)< 2\wedge d(x,I_2^k),\,\forall k\ne j\big\},&\text{if $s=2$};\\
C_j,&\text{if $s=3$}.
\end{cases}\]
This claim~\eqref{eq:partial-der-comp} is obvious for $s=1$ and $s=3$. For $s=2$, the properties of $\phi$ and the definition of $R_2^j$ yield
\[\Big|\frac{\partial A_2(\hat\Pc_2,G_2)(z)}{\partial G_2(P_j)}\Big|\le |\alpha-\beta|\Big|\phi'\Big(d\big(z\,,\,\cup_kI_2^k\big)\Big)\Big|\mathds1_{R_2^j}(z)=|\alpha-\beta|\big|\phi'\big(d(z,I_2^j)\big)\big|\mathds1_{R_2^j}(z),\]
which indeed implies~\eqref{eq:partial-der-comp}. Now injecting~\eqref{eq:partial-der-comp} into~\eqref{eq:mixex-1}, and noting that in each case the sets $\{R_s^j\}_j$ are disjoint, we obtain
\begin{align}\label{eq:mixex-2}
\parfct{G_s,B_{\ell+1}(x)}Z(A_s)&\le C\sum_j\mathds1_{P_j\in B_{\ell+1}(x)}\int_{R_s^j}\Big|\frac{\partial Z(A_s)}{\partial A_s}\Big|=C\int_{\bigcup_{j:P_j\in B_{\ell+1}(x)}R_s^j}\Big|\frac{\partial Z(A_s)}{\partial A_s}\Big|\nonumber\\
&\le C\int_{B_{D_s(\ell,x)}(x)}\Big|\frac{\partial Z(A_s)}{\partial A_s}\Big|,
\end{align}
with
\[D_s(\ell,x):=\sup\Big\{d(y,x)\,:\,y\in \bigcup_{j:P_j\in B_{\ell+1}(x)}R_s^j\Big\}.\]
For $s=1,2$ with radius law $V$ uniformly bounded by a deterministic constant $R>0$, we obtain $D_1(\ell,x)\le\ell+R+1$ and $D_2(\ell,x)\le\ell+R+3$, and injecting~\eqref{eq:mixex-2} into~\eqref{eq:mixex-0} directly yields the result~\eqref{eq:s1-bound}.

\medskip\noindent
We now consider the cases $s=1,2$ with general unbounded radii. Without loss of generality we only treat $s=1$, in which case
\[D_1(\ell,x)\le\ell+1+\bar D_1(\ell,x),\qquad \bar D_1(\ell,x):=\max\big\{V_j\,:\,P_j\in B_{\ell+1}(x)\big\}.\]
Noting that the restriction $A_1|_{\R^d\setminus B_{\ell+1+\bar D_1(\ell,x)}(x)}$ is by construction independent of $\bar D_1(\ell,x)$, we obtain, conditioning on the values of $\bar D_1(\ell,x)$ and arguing as in Step~2 of the proof of Theorem~\ref{th:ar},
\begin{eqnarray}\label{eq:mixex-3}
\lefteqn{\expec{\bigg(\int_{B_{\ell+1+\bar D_1(\ell,x)}(x)}\Big|\frac{\partial Z(A_1)}{\partial A_1}\Big|\bigg)^2}}\nonumber\\
&\le& \int_0^\infty \expeCM{\bigg(\int_{B_{\ell+r+1}(x)}\Big|\frac{\partial Z(A_1)}{\partial A_1}\Big|\bigg)^2\mathds1_{\bar D_1(\ell,x)\ge r-1}}{\bar D_1(\ell,x)<r} \prm{\bar D_1(\ell,x)<r}dr\nonumber\\
&\le& \int_0^\infty \expeCM{\supessd{A_1,B_{\ell+r+1}(x)}\bigg(\int_{B_{\ell+r+1}(x)}\Big|\frac{\partial Z(A_1)}{\partial A_1}\Big|\bigg)^2\mathds1_{\bar D_1(\ell,x)\ge r-1}}{\bar D_1(\ell,x)<r}\nonumber\\
&&\hspace{10cm}\times\prm{\bar D_1(\ell,x)<r}dr\nonumber\\
&\le&\int_0^\infty\expecM{\supessd{A_1,B_{\ell+r+1}(x)}\bigg(\int_{B_{\ell+r+1}(x)}\Big|\frac{\partial Z(A_1)}{\partial A_1}\Big|\bigg)^2}\frac{\prm{r-1\le\bar D_1(\ell,x)<r}}{\prm{\bar D_1(\ell,x)<r}}\,dr.
\end{eqnarray}
Now by definition of the decorated Poisson point process $\hat\Pc_1$, we compute
\begin{align*}
\prm{\bar D_1(\ell,x)\ge r-1}&=\prm{\exists j \,:\, V_j\ge r-1\text{ and }P_j\in B_{\ell+1}(x)}\\
&=e^{-|B_{\ell+1}|}\sum_{n=0}^\infty \frac{|B_{\ell+1}|^n}{n!}\big(1-(1-\pr{V\ge r-1})^n\big)\\
&=1-e^{-|B_{\ell+1}|\,\pr{V\ge r-1}},
\end{align*}
hence
\begin{align*}
\frac{\prm{r-1\le\bar D_1(\ell,x)<r}}{\prm{\bar D_1(\ell,x)<r}}=1-e^{-|B_{\ell+1}|\pr{r-1\le V<r}}\le 1\wedge\big(C(\ell+1)^d\,\pr{r-1\le V<r}\big).
\end{align*}
Combining this computation with~\eqref{eq:mixex-0}, \eqref{eq:mixex-2}, and~\eqref{eq:mixex-3}, we obtain
\begin{multline*}
\Var_{G_1}[\E_{\hat\Pc_1}[Z(A_1)]]
\,\lesssim\, \E\bigg[\int_0^\infty\int_0^\infty\int_{\R^d}\supessd{A_1,B_{\ell+r+1}(x)}\bigg(\int_{B_{\ell+r+1}(x)}\Big|\frac{\partial Z(A_1)}{\partial A_1}\Big|\bigg)^2dx\\
\times\big((\ell+1)^{-d}\wedge\pr{r-1\le V<r}\big)dr\,\pi_s(\ell)\,d\ell\bigg],
\end{multline*}
and the conclusion~\eqref{eq:s1-gen} follows.

\medskip\noindent
We finally turn to the case $s=3$, for which
\[D_3(\ell,x)\le\ell+1+\bar D_3(\ell,x),\qquad \bar D_3(\ell,x):=\max\big\{\diam(C_j)\,:\,P_j\in B_{\ell+1}(x)\big\}.\]
Noting that the restriction $A_3|_{\R^d\setminus B_{\ell+1+2\bar D_3(\ell,x)}(x)}$ is by construction independent of $\bar D_3(\ell,x)$ 
we obtain, after conditioning on the values of $\bar D_3(\ell,x)$ and  arguing as in~\eqref{eq:mixex-3},
\begin{multline}\label{eq:mixex-4}
{\expec{\bigg(\int_{B_{\ell+1+\bar D_3(\ell,x)}(x)}\Big|\frac{\partial Z(A_3)}{\partial A_3}\Big|\bigg)^2}}\,
\le \expec{\supessd{A_3,B_{3\ell+1}(x)}\bigg(\int_{B_{3\ell+1}(x)}\Big|\frac{\partial Z(A_3)}{\partial A_3}\Big|\bigg)^2}
\\+\int_{2\ell}^\infty\expec{\supessd{A_3,B_{\ell+r+1}(x)}\bigg(\int_{B_{\ell+r+1}(x)}\Big|\frac{\partial Z(A_3)}{\partial A_3}\Big|\bigg)^2}\frac{\prm{r-1\le\bar D_3(\ell,x)<r}}{\prm{\bar D_3(\ell,x)<r}}\,dr.
\end{multline}
Similar computations as in Step~2 of the proof of Proposition~\ref{prop:voronoi} yield
\[\prm{\bar D_3(\ell,x)\ge r}\le Ce^{-\frac1C(r-\ell)_+^d}.\]
Combining this with~\eqref{eq:mixex-0}, \eqref{eq:mixex-2}, and~\eqref{eq:mixex-4}, we obtain
\begin{eqnarray*}
\lefteqn{\Var_{G_3}[\E_{\hat\Pc_3}[Z(A_3)]]}\\
&\lesssim& \expecM{\int_0^\infty \int_{\R^d}\supessd{A_3,B_{3\ell+1}(x)}\bigg(\int_{B_{3\ell+1}(x)}\Big|\frac{\partial Z(A_3)}{\partial A_3}\Big|\bigg)^2dx\,(\ell+1)^{-d}\pi_3(\ell)\,d\ell}\\
&&+\expecM{\int_0^\infty\int_{2\ell}^\infty \int_{\R^d}\supessd{A_3,B_{\ell+r+1}(x)}\bigg(\int_{B_{\ell+r+1}(x)}\Big|\frac{\partial Z(A_3)}{\partial A_3}\Big|\bigg)^2dx\,e^{-\frac1Cr^d}\,dr\,(\ell+1)^{-d}\pi_3(\ell)\,d\ell}\\
&\lesssim& \expecM{\int_0^\infty \int_{\R^d}\supessd{A_3,B_{3\ell+1}(x)}\bigg(\int_{B_{3\ell+1}(x)}\Big|\frac{\partial Z(A_3)}{\partial A_3}\Big|\bigg)^2dx\,\big((\ell+1)^{-d}\pi_3(\ell)+e^{-\frac1C\ell^d}\big)\,d\ell},
\end{eqnarray*}
and the result  follows.
\end{proof}


\medskip

\appendix

\addtocontents{toc}{\protect\setcounter{tocdepth}{0}}
\section{Proof of standard functional inequalities}\label{sec:appendA}

In this appendix, we give a proof of Proposition~\ref{prop:criMSG}.

\begin{proof}[Proof of Proposition~\ref{prop:criMSG}]
Let $\e>0$ be fixed, and consider the partition $(Q_x)_{x\in\Z^d}$ of $\R^d$ defined by $Q_x=\e x+\e Q$. Choose an i.i.d.\@ copy $A_0'$ of the field $A_0$, and for all $x$ define the random field $A_0^x$ by $A_0^x|_{\R^d\setminus Q_x}:=A_0|_{\R^d\setminus Q_x}$ and $A_0^x|_{Q_x}:=A_0'|_{Q_x}$. We split the proof into three steps.

\medskip

\step1 Tensorization argument.\\
Choose an enumeration $(x_n)_n$ of $\Z^d$, and for all $n$ let $\Pi_n$ and $\mathbb E_n$ denote the linear maps on $\Ld^2(\Omega)$ defined by
\[\Pi_n:=\expeCm{\cdot}{A_0|_{\bigcup_{k=1}^nQ_{x_k}}},\qquad\qquad\mathbb E_n:=\expeCm{\cdot}{A_0|_{\R^d\setminus Q_{x_n}}}.\]
Also define
\begin{gather*}
\operatorname{Cov}_{n}[Y;Z]:=\mathbb E_n[YZ]-\mathbb E_n[Y]\mathbb E_n[Z],\qquad\operatorname{Var}_{n}[Z]:=\operatorname{Cov}_n[Z;Z],\\
\operatorname{Ent}_n[Z^2]:=\mathbb E_n\big[Z^2\log(Z^2/\mathbb E_n[Z^2])\big].
\end{gather*}
In this step, we make use of a martingale argument \`a la Lu-Yau~\cite{Lu-Yau-93} to show the following tensorization identities for the covariance and for the entropy: for all $\sigma(A_0)$-measurable random variables $Y(A_0),Z(A_0)$, we have
\begin{eqnarray}\label{eq:tens-cov0}
|\cov{Y(A_0)}{Z(A_0)}|&\le&\sum_{k=1}^\infty\expec{\,\big|\operatorname{Cov}_{k}\big[\Pi_{k}[Y(A_0)];\Pi_{k}[Z(A_0)]\big]\big|\,},\\
\ent{Z(A_0)^2}&\le&\sum_{k=1}^\infty\expec{\operatorname{Ent}_{k}\!\big[\Pi_k[Z(A_0)^2]\big]}.\label{eq:tens-ent0}
\end{eqnarray}
First note that for all $\sigma(A_0)$-measurable random variables $Z(A_0)\in\Ld^2(\Omega)$, the properties of conditional expectations ensure that 
$\Pi_n[Z(A_0)]\to Z(A_0)$ in $\Ld^2(\Omega)$ as $n\uparrow\infty$. 
We then decompose the covariance into the following telescopic sum
\begin{eqnarray*}
\lefteqn{\cov{\Pi_n[Y(A_0)]}{\Pi_n[Z(A_0)]}}
\\
&=&\sum_{k=1}^n\Big(\expec{\Pi_k[Y(A_0)]\Pi_k[Z(A_0)]}-\expec{\Pi_{k-1}[Y(A_0)]\Pi_{k-1}[Z(A_0)]}\Big)\\
&=&\sum_{k=1}^n\expec{\operatorname{Cov}_{k}\big[\Pi_{k}[Y(A_0)];\Pi_{k}[Z(A_0)]\big]},
\end{eqnarray*}
so that the result~\eqref{eq:tens-cov0} follows by taking the limit $n\uparrow\infty$.
Likewise, we decompose the entropy into the following telescopic sum
\begin{eqnarray*}
\lefteqn{\ent{\Pi_n[Z(A_0)^2]}}\\
&=&\sum_{k=1}^n\Big(\expec{\Pi_k[Z(A_0)^2]\log(\Pi_k[Z(A_0)^2])}-\expec{\Pi_{k-1}[Z(A_0)^2]\log(\Pi_{k-1}[Z(A_0)^2])}\Big)\\
&=&\sum_{k=1}^n\expec{\operatorname{Ent}_{k}\!\big[\Pi_k[Z(A_0)^2]\big]},
\end{eqnarray*}
and the result~\eqref{eq:tens-ent0} follows in the limit $n\uparrow\infty$.

\medskip

\step2 Preliminary versions of (CI) and (LSI).\\
In this step, we prove that for all $\sigma(A_0)$-measurable random variables $Y(A_0),Z(A_0)$ we have
\begin{eqnarray}
\lefteqn{|\cov{Y(A_0)}{Z(A_0)}\!|}\nonumber
\\
&\le&\frac12\sum_{k=1}^\infty\expec{\big|\Pi_{k}\big[Y(A_0)-Y(A_0^{x_k})\big]\big|\,\big|\Pi_{k}\big[Z(A_0)-Z(A_0^{x_k})\big]\big|}
\nonumber \\
&\le&\frac12\sum_{x\in\Z^d}\expec{\big(Y(A_0)-Y(A_0^{x})\big)^2}^\frac12\expec{\big(Z(A_0)-Z(A_0^{x})\big)^2}^\frac12,\label{eq:tens-cov}
\end{eqnarray}
and 
\begin{gather}
\ent{Z(A_0)}\le2\sum_{x\in\Z^d}\E\bigg[{\supessd{A_0'}\big(Z(A_0)-Z(A_0^{x})\big)^2}\bigg].\label{eq:tens-ent}
\end{gather}
We first prove~\eqref{eq:tens-cov}: we appeal to~\eqref{eq:tens-cov0} in the form
\begin{align*}
|\cov{Y(A_0)}{Z(A_0)}|&\le\frac12\sum_{k=1}^\infty\expec{\big|\mathbb E_k\big[\Pi_{k}[Y(A_0)-Y(A_0^{x_k})]\,\Pi_{k}[Z(A_0)-Z(A_0^{x_k})]\big]\big|}\\
&\le\frac12\sum_{k=1}^\infty\expec{\big|\Pi_{k}[Y(A_0)-Y(A_0^{x_k})]\big|\,\big|\Pi_{k}[Z(A_0)-Z(A_0^{x_k})]\big|},
\end{align*}
which directly yields~\eqref{eq:tens-cov} by Cauchy-Schwarz' inequality.
Likewise, we argue that~\eqref{eq:tens-ent} follows from \eqref{eq:tens-ent0}.
To this aim, we have to reformulate the RHS of \eqref{eq:tens-ent0}: using the inequality $a\log a-a+1\le(a-1)^2$ for all $a\ge0$, we obtain for all $k\ge 0$,
\begin{align*}
\operatorname{Ent}_{k}\big[\Pi_k[Z(A_0)^2]\big]&\le\mathbb E_k[\Pi_k[Z(A_0)^2]]\,\mathbb E_k\bigg[\Big(\frac{\Pi_k[Z(A_0)^2]}{\mathbb E_k[\Pi_k[Z(A_0)^2]]}-1\Big)^2\bigg]\\
&=\frac{\operatorname{Var}_k\big[\Pi_k[Z(A_0)^2]\big]}{\mathbb E_k[\Pi_k[Z(A_0)^2]]}\\
&=\frac{\E_k\big[(\Pi_k[Z(A_0)^2]-\Pi_k[Z(A_0^{x_k})^2])^2\big]}{2\,\E_k[\Pi_k[Z(A_0)^2]]}\\
&=\frac{\E_k\big[(\Pi_k[(Z(A_0)-Z(A_0^{x_k}))(Z(A_0)+Z(A_0^{x_k}))])^2\big]}{2\,\E_k[\Pi_k[Z(A_0)^2]]}\\
&\le\frac{\E_k\big[\Pi_k[(Z(A_0)-Z(A_0^{x_k}))^2]\,\Pi_k[(Z(A_0)+Z(A_0^{x_k}))^2]\big]}{2\,\E_k[\Pi_k[Z(A_0)^2]]}.
\end{align*}
Since  $(A_0,A_0^{x_k})$ and $(A_0^{x_k},A_0)$ have the same law by complete independence, the above implies, using the inequality $(a+b)^2\le2(a^2+b^2)$ for all $a,b\in\R$,
\begin{align*}
\operatorname{Ent}_{k}\big[\Pi_k[Z(A_0)^2]\big]&\le\frac{2\,\E_k\big[\Pi_k[(Z(A_0)-Z(A_0^{x_k}))^2]\,\Pi_k[Z(A_0^{x_k})^2]\big]}{\E_k[\Pi_k[Z(A_0^{x_k})^2]]}\\
&\le2\supessd{A_0'|_{Q_{x_k}}}\Pi_k[(Z(A_0)-Z(A_0^{x_k}))^2]\\
&\le2\,\Pi_k\bigg[\supessd{A_0'|_{Q_{x_k}}}(Z(A_0)-Z(A_0^{x_k}))^2\bigg].
\end{align*}
Estimate~\eqref{eq:tens-ent} now follows from~\eqref{eq:tens-ent0}.

\medskip
\step3 Proof of (CI) and (LSI).\\
We start with the proof of~(CI). 
Since $A=A(A_0)$ is $\sigma(A_0)$-measurable,~\eqref{eq:tens-cov} yields for all $\sigma(A)$-measurable random variables $Y(A),Z(A)$,
\begin{align*}
\big|\cov{Y(A)}{Z(A)}\big|&\le\frac12\sum_{x\in\Z^d}\expec{\big(Y(A)-Y(A(A_0^{x}))\big)^2}^\frac12\expec{\big(Z(A)-Z(A(A_0^{x}))\big)^2}^\frac12.
\end{align*}
Using that $\expeCm{Y(A)}{A_0|_{\R^d\setminus Q_x}}=\expeCm{Y(A(A_0^x))}{A_0|_{\R^d\setminus Q_x}}$ by complete independence of the field $A_0$, 
\[\expec{\big(Y(A)-Y(A(A_0^{x}))\big)^2}=\expec{\Big(\parG{A_0,Q_x}Y(A(A_0))\Big)^2},\]
where we define the Glauber derivative as 
\begin{align*}
\qquad\parG{A,S}Y(A)=\expeCpm{\big(Y(A)-Y(A')\big)^2}{A'|_{\R^d\setminus S}=A|_{\R^d\setminus S}}^\frac12
\end{align*}
 letting $A'$ denote an i.i.d.\@ copy of $A$, and denoting by $\expecp{\cdot}$ the expectation wrt $A'$ only.
Since the conditional expectation $\expeCm{\cdot}{A_0|_{\R^d\setminus Q_x}}$ coincides with the $\Ld^2$-projection onto the $\sigma(A_0|_{\R^d\setminus Q_x})$-measurable functions, and since $\expeCm{Y(A)}{A|_{\R^d\setminus (Q_x+B_R)}}$ is $\sigma(A|_{\R^d\setminus(Q_x+B_R)})$-measurable and therefore $\sigma(A_0|_{\R^d\setminus Q_x})$-measurable by assumption, we have
\[\expec{\Big(\parG{A_0,Q_x}Y(A(A_0))\Big)^2}\le\expec{\Big(\parG{A,Q_x+B_R}Y(A)\Big)^2}.\]
Combining these two observations, we deduce that for all $\sigma(A)$-measurable random variables $Y(A),Z(A)$,
\begin{align*}
\big|\cov{Y(A)}{Z(A)}\big|&\le\frac12\sum_{x\in\Z^d}\expec{\Big(\parG{A,Q_x+B_R}Y(A)\Big)^2}^\frac12\expec{\Big(\parG{A,Q_x+B_R}Z(A)\Big)^2}^\frac12.
\end{align*}
By taking local averages, this turns into
\begin{eqnarray*}
\lefteqn{\big|\cov{Y(A)}{Z(A)}\big|}
\\
&\le&\frac{\e^{-d}}2\sum_{x\in\Z^d}\int_{\e Q}\expec{\Big(\parG{A,y+\e x+\e Q+B_R}Y(A)\Big)^2}^\frac12\expec{\Big(\parG{A,y+\e x+\e Q+B_R}Z(A)\Big)^2}^\frac12dy\\
&=&\frac{\e^{-d}}2\int_{\R^d}\expec{\Big(\parG{A,y+\e Q+B_R}Y(A)\Big)^2}^\frac12\expec{\Big(\parG{A,y+\e z+\e Q+B_R}Z(A)\Big)^2}^\frac12dy\\
&\le&\frac{\e^{-d}}2\int_{\R^d}\expec{\Big(\parG{A,B_{R+\e\sqrt d/2}(y)}Y(A)\Big)^2}^\frac12\expec{\Big(\parG{A,B_{R+\e\sqrt d/2}(y)}Z(A)\Big)^2}^\frac12dy,
\end{eqnarray*}
that is,~(CI) for any radius larger than $R$. 

\smallskip\noindent
We then turn to the proof of~(LSI).
For all $\sigma(A)$-measurable random variables $Z(A)$, the estimate~\eqref{eq:tens-ent} yields
\begin{align*}
\ent{Z(A)}~&\le~2\sum_{x\in\Z^d}\expec{\supessd{A_0'}\big(Z(A(A_0))-Z(A(A_0^{x}))\big)^2}
\\
&\le~ 2\sum_{x\in\Z^d}\expec{\Big(\oscd{A,Q_x+B_R}Z(A)\Big)^2}.
\end{align*}
The desired result~(LSI) then follows from taking local averages.
\end{proof}

\section{Proof for Gaussian fields}\label{chap:condunif}

This section is dedicated to a self-contained proof of Theorem~\ref{cor:gaussiansg}, based on deforming functional inequalities satisfied by i.i.d.\@ Gaussian sequences.
We refer to~\cite[Appendix~A]{DG3} for a more direct proof based on Malliavin technology.
\begin{proof}[Proof of Theorem~\ref{cor:gaussiansg}]
We split the proof into three steps. 
The result follows from a radial change of variables in suitable Brascamp-Lieb inequalities recalled and proved in the first two steps.

\medskip

\step1 Discrete Brascamp-Lieb inequalities: 
Given a standard Gaussian random vector $W:=(W_1,\ldots,W_N)$ with $N$ independent components, and given a linear transformation $F\in\R^{N\times N}$, the transformed random vector $A:=(A_1,\ldots,A_N):=FW$ satisfies for all $\sigma(A)$-measurable random variables $Y(A),Z(A)$,
\begin{eqnarray}
\var{Z(A)}\!\!&\le&\!\! C\sum_{i,j=1}^N|(FF^t)_{ij}|~\expec{\Big|\frac{\partial Z(A)}{\partial A_i}\Big|\Big|\frac{\partial Z(A)}{\partial A_j}\Big|},\label{eq:bl-var-disr}
\\
\ent{Z(A)^2}\!\!&\le&\!\! C\sum_{i,j=1}^N|(FF^t)_{ij}|~\expec{\Big|\frac{\partial Z(A)}{\partial A_i}\Big|\Big|\frac{\partial Z(A)}{\partial A_j}\Big|},\nonumber\\
\cov{Y(A)}{Z(A)}\!\!&\le &\!\!C\sum_{i=1}^N\expec{\Big(\sum_{j=1}^N\frac{\partial{Y(A)}}{\partial A_j}F_{ji}\Big)^2}^\frac12\!\expec{\Big(\sum_{k=1}^N\frac{\partial{Z(A)}}{\partial A_k}F_{ki}\Big)^2}^\frac12.\nonumber
\end{eqnarray}
{\color{black}Starting point is the well-known corresponding inequalities for independent standard Gaussian random variables (cf.~\cite{Gross-75}): for all $\sigma(A)$-measurable random variables $Y(A),Z(A)$,
\begin{eqnarray*}
\var{Z(A)}\!\!&\le&\!\! C\sum_{i=1}^N\expec{\Big(\frac{\partial Z(A)}{\partial W_i}\Big)^2},
\\
\ent{Z(A)^2}\!\!&\le&\!\! C\sum_{i=1}^N\expec{\Big(\frac{\partial Z(A)}{\partial W_i}\Big)^2},
\\
\cov{Y(A)}{Z(A)}\!\!&\le &\!\!C\sum_{i=1}^N\expec{\Big(\frac{\partial{Y(A)}}{\partial W_i}\Big)^2}^\frac12\!\expec{\Big(\frac{\partial{Z(A)}}{\partial W_i}\Big)^2}^\frac12.
\end{eqnarray*}
%
%
%
It remains to examine how those inequalities are deformed under the chain rule when derivatives wrt $W$ are replaced by derivatives wrt $A$. It suffices to estimate
\begin{multline*}
\sum_{i=1}^N\expec{\Big(\frac{\partial{Z(A)}}{\partial W_i}\Big)^2}
= \sum_{i=1}^N\E\bigg[{\Big(\sum_{j=1}^N\frac{\partial{Z(A)}}{\partial A_j}F_{ji} \Big)^2}\bigg]
=\expec{\nabla Z(A)\cdot (FF^t)\nabla Z(A)}\nonumber \\
\le \sum_{i,j=1}^N|(FF^t)_{ij}|~\expec{\Big|\frac{\partial{Z(A)}}{\partial A_i}\Big|\Big|\frac{\partial{Z(A)}}{\partial A_j}\Big|},
\end{multline*}
and the claims follow.
}

\medskip

\step2 Continuum Brascamp-Lieb inequalities: For $A$ as in the statement of Theorem~\ref{cor:gaussiansg},
we have for all $\sigma(A)$-measurable random variables $Y(A),Z(A)$,
\begin{eqnarray}\label{eq:BL-var}
\var {Z(A)}&\le& C\,\expec{\int_{\R^d}\int_{\R^d}\Big|\frac{\partial Z(A)}{\partial A}(z)\Big|\Big|\frac{\partial Z(A)}{\partial A}(z')\Big| |\Cc(z-z')|dzdz'},
\\
\ent{Z(A)^2}&\le& C\,\expec{\int_{\R^d}\int_{\R^d}\Big|\frac{\partial Z(A)}{\partial A}(z)\Big|\Big|\frac{\partial Z(A)}{\partial A}(z')\Big| |\Cc(z-z')|dzdz'},\\
\cov {Y(A)}{Z(A)}&\le& C\int_{\R^d}\expec{\bigg(\int_{\R^d}\Big|\frac{\partial Y(A)}{\partial A}(z)\Big|\,|\F^{-1}(\sqrt{\F\Cc})(x-z)|\,dz\bigg)^2}^\frac12 \nonumber \\
&& \qquad \times\expec{\bigg(\int_{\R^d}\Big|\frac{\partial Z(A)}{\partial A}(z')\Big|\,|\F^{-1}(\sqrt{\F\Cc})(x-z')|\,dz'\bigg)^2}^\frac12 dx.\nonumber 
\nonumber  
\end{eqnarray}
We focus on the Brascamp-Lieb inequality~\eqref{eq:BL-var}.
By an approximation argument, it is enough to establish~\eqref{eq:BL-var} for those random variables $Z(A)$ that depend on $A$ only via their spatial averages on the partition $\{Q_\e(z)\}_{z\in B_R\cap\e\Z^d}$ with $\e,R>0$.
We introduce the following notation for these averages:
\begin{equation}\label{e.51}
A_\e(z):=\fint_{Q_\e(z)}A,\qquad\mbox{for}\;z\in\e\Z^d.
\end{equation}
In this case, the Fréchet derivative $\{\frac{\partial Z}{\partial A}(x)\}_{x\in\mathbb{R}^d}$ and the 
partial derivatives $\{\frac{\partial Z}{\partial A_\e(z)}\}_{z\in\e\Z^d}$ of $Z=Z(A)$ are related via
\begin{equation}\label{e.54}
\e^d\frac{\partial Z}{\partial A}(x)=\frac{\partial Z}{\partial A_\e(z)},\qquad
\mbox{for}\;x\in Q_\e(z),\,z\in \e\Z^d.
\end{equation}
We infer from (\ref{e.51}) that $\{A_\e(z)\}_{z\in\e\Z^d}$ is a discrete centered Gaussian random field (which is now stationary wrt the action of $\e\Z^d$),
characterized by its covariance
\begin{equation}\label{e.52}
\mathcal C_\e(z-z'):=\fint_{Q_\e(z)}\fint_{Q_\e(z')}\mathcal C(x-x')dx' dx.
\end{equation}
By the discrete result~\eqref{eq:bl-var-disr} in Step~1, we deduce for all $\e,R>0$ and all random variables $Z(A)$ that depend on $A$ only via its spatial averages on the partition $\{Q_\e(z)\}_{z\in B_R\cap\e\Z^d}$,
\begin{gather*}\label{e.53}
\var{Z(A)}\le C\sum_{z\in B_R\cap\e\Z^d}\sum_{z'\in B_R\cap\e\Z^d}|\Cc_\e(z-z')|\,\expec{\Big|\frac{\partial Z}{\partial A_\e(z)}\Big|\Big|\frac{\partial Z}{\partial A_\e(z')}\Big|}.
\end{gather*}
Injecting~\eqref{e.54} and~\eqref{e.52}, the conclusion~\eqref{eq:BL-var} follows.

\medskip

\step3 Conclusion.\\
We focus on the proof for the variance and the covariance (the arguments for the entropy are similar).
By the Brascamp-Lieb inequality \eqref{eq:BL-var}, if $x\mapsto \sup_{B(x)}|\Cc|$ is integrable, the inequality $|ab|\le (a^2+b^2)/2$ for $a,b\in\R$ directly yields for all $\sigma(A)$-measurable random variables $Z(A)$ and all $R>0$ (after taking local averages),
\begin{align*}
\var{Z(A)}&\le C\,\expec{\int_{\R^d}\int_{\R^d} \Big|\frac{\partial Z(A)}{\partial A}(x)\Big|\Big|\frac{\partial Z(A)}{\partial A}(x')\Big||\Cc(x-x')|dxdx'}
\\
&\le2C\,\Big\|\sup_{B_{2R}(\cdot)}|\Cc|\Big\|_{\Ld^1}\expec{\int_{\R^d} \bigg(\fint_{B_R(x)}\Big|\frac{\partial Z(A)}{\partial A}\Big|\bigg)^2dx}.
\end{align*}
Now assume that the covariance function $\Cc$ is not integrable, and that $\sup_{B(x)}|\Cc|\le c(|x|)$ for some Lipschitz function $c:\R_+\to\R_+$. Given a $\sigma(A)$-measurable random variable $Z(A)$, we consider the projection $Z_R(A):=\E[Z(A)\| A|_{B_R}]$, for $R>0$. Taking local averages, using polar coordinates, and integrating by parts (note that there is no boundary term since the Fréchet derivative $\partial Z_R(A)/\partial A$ is compactly supported in $B_R$), the Brascamp-Lieb inequality~\eqref{eq:BL-var} yields
\begin{eqnarray*}
\lefteqn{\var{Z_R(A)}}
\\
&\le& C\,\E\bigg[{\int_{\R^d}\int_{\Sp^{d-1}}\int_0^\infty 
\Big|\frac{\partial Z_R(A)}{\partial A}(x) \Big|\fint_{B(x+\ell u)}\Big|\frac{\partial Z_R(A)}{\partial A}(u')\Big|du' \ell^{d-1}c(\ell)d\ell d\sigma(u)dx}\bigg]\\
&=&C\,\E\bigg[{\int_{\R^d}\Big|\frac{\partial Z_R(A)}{\partial A}(x) \Big|\int_{\Sp^{d-1}}\int_0^\infty\int_0^\ell
\fint_{B(x+s u)}\Big|\frac{\partial Z_R(A)}{\partial A}(u')\Big|du' s^{d-1}ds(-c'(\ell))d\ell d\sigma(u)dx}\bigg]\\
&\le&C\,\E\bigg[{\int_{\R^d} \Big|\frac{\partial Z_R(A)}{\partial A}(x) \Big|\int_0^\infty\bigg(\int_{B_{\ell+1}(x)}\Big|\frac{\partial Z_R(A)}{\partial A}\Big|\bigg)(-c'(\ell))d\ell dx}\bigg].
\end{eqnarray*}
Reorganizing the integrals, and taking local spatial averages, we conclude
\begin{eqnarray*}
\lefteqn{\var{Z_R(A)}}
\\
&\le &C\,\expec{\int_0^\infty \int_{\R^d} \Big|\frac{\partial Z_R(A)}{\partial A}(x) \Big|\Big(\parfct{A,B_{\ell+1}(x)}Z_R(A)\Big) dx(-c'(\ell))_+d\ell}\\
&\le&C\,\expec{\int_0^\infty \int_{\R^d}\int_{B_{\ell+1}}\Big|\frac{\partial Z_R}{\partial A}(x+y)\Big|\Big(\parfct{A,B_{\ell+1}(x+y)}Z_R(A)\Big)dydx\,(\ell+1)^{-d}(-c'(\ell))_+d\ell}\\
&\le&C\,\expec{\int_0^\infty \int_{\R^d}\Big(\parfct{A,B_{2(\ell+1)}(x)}Z_R(A)\Big)^2dx\,(\ell+1)^{-d}(-c'(\ell))_+d\ell}\\
&\le &C\, \expec{\int_0^\infty \int_{\R^d}\Big(\parfct{A,B_{\ell+1}(x)}Z_R(A)\Big)^2dx\,(\ell+1)^{-d}(-c'(\ell))_+d\ell},
\end{eqnarray*}
where in the last line we used the (sub)additivity of $S\mapsto\parfct{A,S}$.
By Jensen's inequality in the form
\[\expec{\Big(\parfct{A,S}Z_R(A)\Big)^2}\le\expec{\Big(\expeC{\parfct{A,S}Z(A)}{A|_{B_R}}\Big)^2}\le\expec{\Big(\parfct{A,S}Z(A)\Big)^2},\]
and passing to the limit $R\uparrow\infty$, the conclusion~($\parfct{}$-MSG) follows.
Let us now turn to the covariance inequality. Assuming that $\sup_{B(x)}|\F^{-1}(\sqrt{\F\Cc})|\le r(|x|)$ for some Lipschitz function $r:\R_+\to\R_+$, a radial integration by parts similar as above yields
\begin{multline*}
\cov {Y_R(A)}{Z_R(A)}\lesssim \int_{\R^d}\expecM{\bigg(\int_0^\infty \Big(\parfct{A,B_{\ell+1}(x)}Y_R(A)\Big)(-r'(\ell))_+\,d\ell\bigg)^2}^\frac12\\
\times\expecM{\bigg(\int_0^\infty \Big(\parfct{A,B_{\ell'+1}(x)}Z_R(A)\Big)(-r'(\ell'))_+\,d\ell'\bigg)^2}^\frac12 dx.
\end{multline*}
By the triangle inequality, this turns into
\begin{align*}
\cov {Y_R(A)}{Z_R(A)}&\lesssim \int_0^\infty\int_0^\infty\int_{\R^d}\expecM{\Big(\parfct{A,B_{\ell+1}(x)}Y_R(A)\Big)^2}^\frac12\\
&\hspace{2cm}\times\expecM{\Big(\parfct{A,B_{\ell'+1}(x)}Z_R(A)\Big)^2}^\frac12 dx\,(-r'(\ell))_+\,d\ell(-r'(\ell'))_+\,d\ell'\\
&\le2\int_0^\infty\int_{\R^d}\expecM{\Big(\parfct{A,B_{\ell+1}(x)}Y_R(A)\Big)^2}^\frac12\expecM{\Big(\parfct{A,B_{\ell+1}(x)}Z_R(A)\Big)^2}^\frac12dx\\
&\hspace{4cm}\times \Big(\int_0^\ell(-r'(\ell'))_+\,d\ell'\Big)(-r'(\ell))_+\,d\ell,
\end{align*}
and the conclusion~($\parfct{}$-MCI) follows after passing to the limit $R\uparrow\infty$.
\end{proof}

\begin{rem}\label{rem:rescale-2}
We address the claim of Remark~\ref{rem:proofplus} in the context of Gaussian random fields. 
By definition, for all $L\ge1$, the rescaled field $A_L:=A(L\cdot)$ has covariance $\Cc_L:=\Cc(L\cdot)$ and for $|x|\ge1$ it satisfies $\sup_{B(x)}|\Cc_L|=\sup_{B_L(Lx)}|\Cc|\le c((L|x|-L+1)_+)\le c(|x|)$ since $c$ is non-increasing.
This shows that the same conclusions as for $A$ in Theorem~\ref{cor:gaussiansg} also hold for $A_L$ uniformly wrt $L\ge1$.
\end{rem}


\section*{Acknowledgements}
The work of MD is supported by F.R.S.-FNRS (Belgian National Fund for Scientific Research) through a Research Fellowship.
The authors acknowledge financial support from the European Research Council under
the European Community's Seventh Framework Programme (FP7/2014-2019 Grant Agreement
QUANTHOM 335410).

\addtocontents{toc}{\protect\setcounter{tocdepth}{1}}
\bigskip
\bibliographystyle{plain}

\begin{thebibliography}{10}

\bibitem{AKM1}
S.~Armstrong, T.~Kuusi, and J.-C. Mourrat.
\newblock Mesoscopic higher regularity and subadditivity in elliptic
  homogenization.
\newblock {\em Comm. Math. Phys.}, 347(2):315--361, 2016.

\bibitem{AKM-book}
S.~Armstrong, T.~Kuusi, and J.-C. Mourrat.
\newblock {\em Quantitative stochastic homogenization and large-scale
  regularity}, volume 352 of {\em Grundlehren der Mathematischen
  Wissenschaften}.
\newblock Springer, Cham, 2019.

\bibitem{AKM2}
S.~N. {Armstrong}, T.~{Kuusi}, and J.-C. {Mourrat}.
\newblock The additive structure of elliptic homogenization.
\newblock {\em Invent. Math.}, 208:999--1154, 2017.

\bibitem{Armstrong-Mourrat-16}
S.~N. Armstrong and J.-C. Mourrat.
\newblock Lipschitz regularity for elliptic equations with random coefficients.
\newblock {\em Arch. Ration. Mech. Anal.}, 219(1):255--348, 2016.

\bibitem{Armstrong-Smart-16}
S.~N. Armstrong and C.~K. Smart.
\newblock Quantitative stochastic homogenization of convex integral
  functionals.
\newblock {\em Ann. Sci. \'Ec. Norm. Sup\'er. (4)}, 49(2):423--481, 2016.

\bibitem{BachPec}
S.~Bachmann and G.~Peccati.
\newblock Concentration bounds for geometric poisson functionals: Logarithmic
  sobolev inequalities revisited.
\newblock {\em Electron. J. Probab.}, 21(6):1--44, 2016.

\bibitem{Barron}
E.~N. Barron, P.~Cardaliaguet, and R.~Jensen.
\newblock Conditional essential suprema with applications.
\newblock {\em Appl. Math. Optim.}, 48(3):229--253, 2003.

\bibitem{BLM-03}
S.~Boucheron, G.~Lugosi, and P.~Massart.
\newblock Concentration inequalities using the entropy method.
\newblock {\em Ann. Probab.}, 31(3):1583--1614, 2003.

\bibitem{Bradley-94}
R.~C. Bradley.
\newblock On regularity conditions for random fields.
\newblock {\em Proc. Amer. Math. Soc.}, 121(2):593--598, 1994.

\bibitem{Burton-Goulet-Meester-93}
R.~M. Burton, M.~Goulet, and R.~Meester.
\newblock On {$1$}-dependent processes and {$k$}-block factors.
\newblock {\em Ann. Probab.}, 21(4):2157--2168, 1993.

\bibitem{DG1}
M.~Duerinckx and A.~Gloria.
\newblock {Multiscale functional inequalities in probability: Concentration
  properties}.
\newblock Preprint, arXiv:1711.03148, 2018.

\bibitem{DG3}
M.~Duerinckx and A.~Gloria.
\newblock {Multiscale second-order Poincar\'e inequalities in probability}.
\newblock Preprint, arXiv:1711.03158, 2018.

\bibitem{DGO1}
M.~Duerinckx, A.~Gloria, and F.~Otto.
\newblock The structure of fluctuations in stochastic homogenization.
\newblock Preprint, arXiv:1602.01717, 2017.

\bibitem{DGO2}
M.~Duerinckx, A.~Gloria, and F.~Otto.
\newblock Robustness of the pathwise structure of fluctuations in stochastic
  homogenization.
\newblock Preprint, arXiv:1807.11781, 2018.

\bibitem{Efron-Stein-81}
B.~Efron and C.~Stein.
\newblock The jackknife estimate of variance.
\newblock {\em Ann. Statist.}, 9(3):586--596, 1981.

\bibitem{Fischer-Otto-16}
J.~Fischer and F.~Otto.
\newblock A higher-order large-scale regularity theory for random elliptic
  operators.
\newblock {\em Comm. Partial Differential Equations}, 41(7):1108--1148, 2016.

\bibitem{Glotto-Neukamm-14}
A.~Gloria, S.~Neukamm, and F.~Otto.
\newblock An optimal quantitative two-scale expansion in stochastic
  homogenization of discrete elliptic equations.
\newblock {\em ESAIM Math. Model. Numer. Anal.}, 48(2):325--346, 2014.

\bibitem{GNO-I}
A.~Gloria, S.~Neukamm, and F.~Otto.
\newblock Quantification of ergodicity in stochastic homogenization: optimal
  bounds via spectral gap on {G}lauber dynamics.
\newblock {\em Invent. Math.}, 199(2):455--515, 2015.

\bibitem{GNO-quant}
A.~Gloria, S.~Neukamm, and F.~Otto.
\newblock Quantitative estimates in stochastic homogenization for correlated
  fields.
\newblock Preprint, arXiv:1409.2678, 2017.

\bibitem{GNO-reg}
A.~Gloria, S.~Neukamm, and F.~Otto.
\newblock A regularity theory for random elliptic operators.
\newblock Preprint, arXiv:1409.2678, 2017.

\bibitem{Glotto-11}
A.~Gloria and F.~Otto.
\newblock An optimal variance estimate in stochastic homogenization of discrete
  elliptic equations.
\newblock {\em Ann. Probab.}, 39(3):779--856, 2011.

\bibitem{Glotto-12}
A.~Gloria and F.~Otto.
\newblock An optimal error estimate in stochastic homogenization of discrete
  elliptic equations.
\newblock {\em Ann. Appl. Probab.}, 22(1):1--28, 2012.

\bibitem{GO4}
A.~Gloria and F.~Otto.
\newblock The corrector in stochastic homogenization: optimal rates, stochastic
  integrability, and fluctuations.
\newblock Preprint, arXiv:1510.08290, 2015.

\bibitem{Gloria-Penrose-13}
A.~Gloria and M.~D. Penrose.
\newblock Random parking, {E}uclidean functionals, and rubber elasticity.
\newblock {\em Comm. Math. Phys.}, 321(1):1--31, 2013.

\bibitem{Gross-75}
L.~Gross.
\newblock Logarithmic {S}obolev inequalities.
\newblock {\em Amer. J. Math.}, 97(4):1061--1083, 1975.

\bibitem{Houdre-PerezAbreu-95}
C.~Houdr\'e and V.~P\'erez-Abreu.
\newblock Covariance identities and inequalities for functionals on {W}iener
  and {P}oisson spaces.
\newblock {\em Ann. Probab.}, 23(1):400--419, 1995.

\bibitem{Lee-97}
S.~Lee.
\newblock The central limit theorem for {E}uclidean minimal spanning trees.
  {I}.
\newblock {\em Ann. Appl. Probab.}, 7(4):996--1020, 1997.

\bibitem{Lee-99}
S.~Lee.
\newblock The central limit theorem for {E}uclidean minimal spanning trees.
  {II}.
\newblock {\em Adv. in Appl. Probab.}, 31(4):969--984, 1999.

\bibitem{Lu-Yau-93}
S.~L. Lu and H.-T. Yau.
\newblock Spectral gap and logarithmic {S}obolev inequality for {K}awasaki and
  {G}lauber dynamics.
\newblock {\em Comm. Math. Phys.}, 156(2):399--433, 1993.

\bibitem{Marahrens-Otto-15}
D.~Marahrens and F.~Otto.
\newblock Annealed estimates on the {G}reen function.
\newblock {\em Probab. Theory Related Fields}, 163(3-4):527--573, 2015.

\bibitem{Naddaf-Spencer-98}
A.~Naddaf and T.~Spencer.
\newblock Estimates on the variance of some homogenization problems.
\newblock Preprint, 1998.

\bibitem{Nourdin-Peccati-12}
I.~Nourdin and G.~Peccati.
\newblock {\em Normal approximations with {M}alliavin calculus}, volume 192 of
  {\em Cambridge Tracts in Mathematics}.
\newblock Cambridge University Press, Cambridge, 2012.
\newblock From Stein's method to universality.

\bibitem{Penrose-01}
M.~D. Penrose.
\newblock Random parking, sequential adsorption, and the jamming limit.
\newblock {\em Comm. Math. Phys.}, 218(1):153--176, 2001.

\bibitem{Penrose-05}
M.~D. Penrose.
\newblock Multivariate spatial central limit theorems with applications to
  percolation and spatial graphs.
\newblock {\em Ann. Probab.}, 33(5):1945--1991, 2005.

\bibitem{Penrose-Yukich-02}
M.~D. Penrose and J.~E. Yukich.
\newblock Limit theory for random sequential packing and deposition.
\newblock {\em Ann. Appl. Probab.}, 12(1):272--301, 2002.

\bibitem{PY-05}
M.~D. Penrose and J.~E. Yukich.
\newblock Normal approximation in geometric probability.
\newblock In {\em Stein's method and applications}, volume~5 of {\em Lect.
  Notes Ser. Inst. Math. Sci. Natl. Univ. Singap.}, pages 37--58. Singapore
  Univ. Press, Singapore, 2005.

\bibitem{Schreiber-Penrose-Yukich-07}
T.~Schreiber, M.~D. Penrose, and J.~E. Yukich.
\newblock Gaussian limits for multidimensional random sequential packing at
  saturation.
\newblock {\em Comm. Math. Phys.}, 272(1):167--183, 2007.

\bibitem{Steele-86}
J.M. Steele.
\newblock An {E}fron-{S}tein inequality for nonsymmetric statistics.
\newblock {\em Ann. Statist.}, 14:753--758, 1986.

\bibitem{Torquato-02}
S.~Torquato.
\newblock {\em Random heterogeneous materials. Microstructure and macroscopic
  properties}, volume~16 of {\em Interdisciplinary Applied Mathematics}.
\newblock Springer-Verlag, New York, 2002.

\bibitem{Wu00}
L.~Wu.
\newblock A new modified logarithmic {S}obolev inequality for {P}oisson point
  processes and several applications.
\newblock {\em Probab. Theory Related Fields}, 118(3):427--438, 2000.

\end{thebibliography}

\def\cprime{$'$} \def\cprime{$'$}

\end{document}